\newcommand{\auth}{Tobias Fritz, Paolo Perrone, Sharwin Rezagholi}
\newcommand{\tit}{Probability, valuations, hyperspace: Three monads on Top and the support as a morphism}
\newcommand{\kw}{Categorical probability, monads, continuous valuations, $\tau$-additive probability measures, hyperspace, powerdomains}
\numberwithin{equation}{section}
\theoremstyle{plain}
\newtheorem{thm}{Theorem}[section]
\newtheorem{lemma}[thm]{Lemma}
\newtheorem{prop}[thm]{Proposition}
\newtheorem{cor}[thm]{Corollary}
\newtheorem{deph}[thm]{Definition}
\newtheorem{nota}[thm]{Notation}
\theoremstyle{definition}
\newtheorem{remark}[thm]{Remark}
\newtheorem{eg}[thm]{Example}
\Crefname{equation}{}{}		
\newcommand{\N}{\mathbb{N}}
\newcommand{\R}{\mathbb{R}}
\newcommand{\Rplusext}{[0,\infty]}	
\newcommand{\sgn}{\mathrm{sgn}}		
\newcommand{\ngs}{\mathrm{ngs}}		
\newcommand{\cat}[1]{{\mathsf{#1}}} 
\newcommand{\ar}[2][]{\arrow{#2}{#1}}
\newcommand{\idar}[2][]{\arrow[equal]{#2}{#1}} 
\DeclareMathOperator{\1}{\mathbbm{1}}
\newcommand{\id}{\mathrm{id}} 
\newcommand{\pair}[2]{\left\langle  {#1}  ,\hspace{.5pt}  {#2}  \right\rangle} 
\newcommand{\join}{\vee}
\DeclareMathOperator*{\bigjoin}{\bigvee}
\newcommand{\intersection}{\cap}
\DeclareMathOperator*{\bigintersection}{\bigcap}
\newcommand{\union}{\cup}
\DeclareMathOperator{\cl}{\mathrm{cl}}
\newcommand{\supp}{\mathrm{supp}}
\DeclareMathOperator{\Hit}{Hit}
\newcommand{\U}{\mathcal{U}}
\newcommand{\E}{\mathcal{E}}
\newcommand{\e}{\mathrm{E}}
\DeclareMathOperator{\eval}{\varepsilon}
\DeclareMathOperator{\op}{\mathcal{O}}
\let\originalleft\left
\let\originalright\right
\renewcommand{\left}{\mathopen{}\mathclose\bgroup\originalleft}
\renewcommand{\right}{\aftergroup\egroup\originalright}
\setlist[enumerate, 1]{label=(\alph{enumi}), ref=(\alph{enumi})}
\setlist[enumerate, 2]{label=(\roman{enumii}), ref=(\roman{enumii})}
\tikzset{%
    bullet/.style={
       fill=black,
       circle,
       minimum width=1pt,
       inner sep=1pt
     },
     relation/.style={
       -,
       thick,
       shorten <=2pt,
       shorten >=2pt
     },
     function/.style={
       ->,
       thick,
       shorten <=2pt,
       shorten >=2pt
     },
     every fit/.style={
       ellipse,
       draw,
       inner sep=0pt
     }
}
\title{Probability, valuations, hyperspace:\\ Three monads on Top and the support as a morphism}
\author[1]{Tobias Fritz\footnote{tobias.fritz [at] uibk.ac.at}}
\author[2]{Paolo Perrone\footnote{paolo.perrone [at] cs.ox.ac.uk}}
\author[3,4]{Sharwin Rezagholi\footnote{sharwin.rezagholi [at] emergentec.com}}
\affil[1]{\small Department of Mathematics, University of Innsbruck, Austria}
\affil[2]{\small Department of Computer Science, University of Oxford, United Kingdom}
\affil[3]{\small St. Petersburg School of Physics, Mathematics, and Computer Science, National Research University HSE, Russia}
\affil[4]{\small emergentec biodevelopment, Vienna, Austria}
\date{\today}
\begin{document}

\newgeometry{top=1cm,bottom=1cm,left=1cm, right=1cm} 

\maketitle

\begin{abstract}
We consider three monads on $\cat{Top}$, the category of topological spaces, which formalize topological aspects of probability and possibility in categorical terms. The first one is the Hoare hyperspace monad $H$, which assigns to every space its space of closed subsets equipped with the lower Vietoris topology. The second one is the monad $V$ of continuous valuations, also known as the extended probabilistic powerdomain. We construct both monads in a unified way in terms of double dualization. This reveals a close analogy between them, and allows us to prove that the operation of taking the support of a continuous valuation is a morphism of monads $V \to H$. In particular, this implies that every $H$-algebra (topological complete semilattice) is also a $V$-algebra. We show that $V$ can be restricted to a submonad of $\tau$-smooth probability measures on $\cat{Top}$. By composing these morphisms of monads, we obtain that taking the supports of $\tau$-smooth probability measures is also a morphism of monads.
\\
\\
\\
\footnotesize{Keywords: Monads on topological spaces, valuations, Borel measures, probability measures, hyperspaces, morphism of monads.}
\\
\\
\footnotesize{Mathematics Subject Classification: 18B30 Category of topological spaces and continuous mappings, 28E15 Measure and integration in connection with logic and set theory, 46M99 Methods of category theory in functional analysis, 54B20 Hyperspaces, 54B30 Categorical methods in general topology, 60B05 Probability measures on topological spaces, 68Q55 Semantics of computation.}

\end{abstract}
\restoregeometry 

\newpage

\newgeometry{top=2cm,bottom=1cm} 
\tableofcontents
\restoregeometry 


\section{Introduction}

In recent decades, aspects of measure and probability theory have been reformulated in categorical terms using the categorical structure of monads in the sense of Eilenberg and Moore \cite{eilenberg65}. All probability monads are variations on the distribution monad on $\cat{Set}$ (see, for example,~\cite{jacobs11}), whose underlying functor assigns to a set the set of its finitely supported probability distributions, or, equivalently, the set of formal finite convex combinations of its elements.
Close relatives of the distribution monad are used to treat probability measures in the sense of measure theory. These monads live on suitable categories of spaces with analytic structure, for example the category of measurable spaces, compact Hausdorff spaces, or complete metric spaces.
The monad approach has two main features: conditional probabilities, in the sense of Markov kernels, arise as Kleisli morphisms \cite{girymonad}; and it provides a conceptually simple definition of integration or expectation on all algebras of the monad~\cite[Chapter~1]{thesis}.

In this paper, we consider two monads of this type on $\cat{Top}$, the category of topological spaces and continuous maps. Concretely, we develop the monad of continuous valuations $V$, and the monad of $\tau$-smooth Borel probability measures $P$, which is a submonad of $V$. Our treatment of $V$, and partly also our treatment of $P$, is largely a review of known material presented in a systematic fashion. Our exposition shows how to exploit duality theory for continuous valuations to obtain a simple description of $V$ (as particular functionals on lower semicontinuous functions), and how to use the embedding of $P$ as a submonad of $V$ to reason about $P$ in similar terms.

In some situations, one may only be interested in whether an event is possible at all rather than in its numerical likelihood or propensity. Computer scientists call this situation \emph{nondeterminism}. This distinction between possibility and impossibility can be treated via monads which are similar to probability monads. Instead of assigning to a space $X$ the collection of probability measures or valuations of a certain type, one assigns to $X$ the collection of subsets of a certain type, where one can think of a subset as specifying those outcomes which are possible. The simplest such monad is arguably the finite powerset monad on $\cat{Set}$ \cite[Example 4.18]{manes03}, which assigns to every set the collection of its finite subsets. In this paper, we consider a close relative of this monad on $\cat{Top}$, namely the Hoare hyperspace monad $H$. Its underlying functor assigns to every topological space the space of its closed subsets, seen as particular functionals on open subsets, just as valuations can be seen as functionals on lower semicontinuous functions. While this is also mostly known, our systematic exposition is of interest insofar as our treatment of $H$ is perfectly parallel to our treatment of $V$, which suggests that both of these monads are instances of a general construction (still an open question).

It is elementary to verify that the finite distribution monad and the finite powerset monad on $\cat{Set}$ are related by a morphism of monads, namely the natural transformation which assigns to a finitely supported probability measure its support, which is the subset of elements that carry nonzero weight. That this transformation is a morphism of monads comprises the statement that the support of a convex combination of finitely supported probability measures is given by the union of the supports of the contributing measures. The main new result of this paper is that this holds on arbitrary topological spaces: forming the support is a morphism of monads $V \to H$, which maps a continuous valuation on a space $X$ to a closed subset of $X$. Restricting this transformation to the submonad $P$ of $V$ results in a morphism of monads $P \to H$, which maps every $\tau$-smooth probability measure on a topological space to its support. We believe that this is the most general context in which it is meaningful to talk about the supports of (unsigned) measures; since the support, being a topological concept, is not defined in a purely measure-theoretic setting for non-atomic measures.

From the point of view of denotational semantics, these monads model probabilistic and nondeterministic computation. Our morphism $V \to H$ yields a continuous map from a probabilistic powerspace to the possibilistic Hoare powerspace that respects the respective monad structures. This formalizes the passage from probabilistic computation to nondeterministic computation.

In summary, we study the following three monads on $\cat{Top}$, the category of topological spaces and continuous maps: The monad $H$ of closed subsets (\Cref{hyperspace}), the monad $V$ of continuous valuations (\Cref{sec_valuations}), and the monad $P$ of $\tau$-smooth Borel probability measures (\Cref{probability}). The monad $H$ is a generalization of the \emph{Hoare powerdomain}~\cite[Section~6.3]{schalk}.
The monad $V$ is also known as the \emph{extended probabilistic powerdomain}~\cite{AJK}. In contrast to $H$ and $V$, the monad $P$ has, to the best of our knowledge, not been considered before in this generality. 
The first part of this paper (\Cref{hyperspace,sec_valuations}) introduces these monads and mostly contains known results, but our analogous constructions of the monads seems to be novel: we define them through \emph{double dualization}, a common theme in the theory of monads \cite{lucyshyn-wright}. The idea is that measures, as well as closely related objects, can be seen as dual to functions, which are themselves dual to points. From the point of view of functional analysis, this amounts to versions of the well-known Markov--Riesz duality. 
From the point of view of theoretical computer science, this states that these monads behave \emph{like} submonads of a continuation monad. This is not technically true, since $\cat{Top}$ does not have the relevant exponential objects. Indeed our double dualization construction, unlike existing categorical approaches to double dualization monads, does not rely on Cartesian closure. However, it recovers the exponential objects whenever they exist (see \Cref{functop}). This duality theory turns out to be of great utility in the construction of the monad structures and the proofs that the monad axioms are satisfied. 
It moreover makes the structural similarity between the Hoare hyperspace monad $H$ and the valuation monad $V$ more precise, and yields a structurally simple proof of our main result, that the support is a morphism of monads.

Technically, we show that Scott continuous modular maps from a frame of open sets $\op(X)$ to $\{0,1\}$ are in canonical bijection with closed sets in $X$, and that the topology of pointwise convergence for such functionals corresponds to the lower Vietoris topology on the space of closed sets $HX$ (\Cref{closedduality}). In particular, a closed set $C$ assigns the truth value $1$ to an open set $U$ if an only if $C\intersection U$ is nonempty.
The valuation monad $V$ has a similar duality theory: continuous valuations on a space $X$ are in canonical bijection with Scott continuous modular functionals on the space of lower semicontinuous functions on $X$ (\Cref{daniellval}). We define the monad structures on $H$ and $V$ in terms of these dualities.

In \Cref{probability}, we show that the functor of $\tau$-smooth Borel probability measures is a submonad of $V$. This seems to be the most general probability monad on topological spaces appearing in the literature, as it is defined on the entire category $\cat{Top}$. Its restriction to the subcategory of compact Hausdorff spaces is the Radon monad~\cite{swirszcz,keimel}.

In \Cref{secmonadmorph}, we define the support of a continuous valuation, and prove that the operation of taking the support is a morphism of monads from $V$ to $H$. This operation can be described in the following way. Given a valuation $\nu$, its support is the unique closed set $\supp(\nu)$ such that, for each open set $U$, the set $\supp(\nu)$ intersects $U$ if and only if $\nu(U)$ is strictly positive. From the possibilistic point of view, an open set $U$ is possible if an only if it has positive probability. In this way, the support induces a map $\supp: VX \to HX$ from valuations to closed subsets. We prove that this map is continuous, natural, and compatible with the two monad structures.

A feature that closed sets, valuations, and measures share is the possibility of forming products and marginals (or projections). This is encoded in the fact that the monads in question are commutative, or, equivalently, symmetric monoidal (see \Cref{comm_monads}). These standard formal constructions yield the familiar notions of products and projections of closed sets, and of product and marginal probability measures, and we discuss them for each monad at the end of the respective section.

\paragraph*{Acknowledgements.}
We thank Jürgen Jost and Slava Matveev for many discussions, Dirk Hofmann and Walter Tholen for their advice, Jean Goubault-Larrecq, Tom\'a\v{s} Jakl and Xiaodong Jia for comments on this paper, and an anonymous reviewer for their extensive and helpful report. Much of this work was done while all three authors were with the Max Planck Institute for Mathematics in the Sciences.

\section{The Hoare hyperspace monad $H$}\label{hyperspace}

The powerset monad on the category of sets is among the most elementary examples of monads. It has an analogue on the category of topological spaces, which we study in this section. But its best-known analogue is on metric spaces, where the Hausdorff metric equips the space of nonempty closed subsets of a bounded metric space with a metric, turning it into a metric space in its own right~\cite{hausdorff}. For a topological space $X$, which may not carry a metric, one version of the hyperspace of $X$ was introduced by Vietoris~\cite{vietoris} who equipped the set of closed subsets of $X$ with the \emph{Vietoris topology}. This construction yields an endofunctor of $\cat{Top}$ which preserves compactness and connectedness. The deep study of hyperspaces by Michael~\cite{michael} showed that the Vietoris topology, when restricted to the nonempty closed sets, is induced by the Hausdorff metric whenever the base space is a compact metric space. The results of Michael implicitly equip the Vietoris functor on the category of compact spaces with a monad structure.

The Vietoris topology is the minimal common refinement of the \emph{lower Vietoris topology} and the \emph{upper Vietoris topology}. The functor that assigns to a topological space the set of its closed subsets with the lower Vietoris topology has been introduced by Smyth \cite{smyth83}. That this endofunctor has a monad structure has been shown by Schalk \cite{schalk} under the almost inconsequential restriction to the category of $T_0$ spaces. She also studied the algebras of this monad \cite{schalk}. Several related topological results are due to Clementino and Tholen \cite{cletho}.

In this section, we discuss this functor, which assigns to a topological space $X$ the space of its closed subsets $HX$ equipped with the lower Vietoris topology, as well as its monad structure. Using terminology proposed by Goubault-Larrecq\footnote{See \href{http://projects.lsv.ens-cachan.fr/topology/?page_id=585}{http://projects.lsv.ens-cachan.fr/topology/?page\_id=585}, where this was proposed in generalization of the term Hoare powerdomain.}, we call $HX$ the \emph{Hoare hyperspace} of $X$ and $H$ the \emph{Hoare hyperspace monad}.
Our construction of $H$ reveals that $H$ is a double dualization monad, where the double dualization is with respect to the Sierpiński space; this facilitates a treatment of the monad multiplication and the monad axioms that is purely formal and free from topological considerations. After an analogous treatment of the monad of continuous valuations in \Cref{sec_valuations}, we show in \Cref{secmonadmorph} that there is a morphism of monads from continuous valuations to $H$ which takes every continuous valuation, and therefore in particular every $\tau$-smooth Borel probability measure, to its support.

In working with hyperspaces, there is a choice to make concerning the membership of the empty set. This choice is relatively inconsequential, in the sense that most results hold either way. While most of the works mentioned above have excluded the empty set, we do include it.

\subsection{Duality theory for closed sets and the Hoare hyperspace}

Let $X$ be a topological space. Then every closed set $C \subseteq X$ can be \emph{paired} with any open $U \subseteq X$ by defining
\begin{equation}
 \label{H_pairing}
 \pair{C}{U} \;\coloneqq\; \begin{cases}
	 1 & \text{if } C \intersection U \ne \varnothing, \\
	 0 & \text{if } C \intersection U = \varnothing.
	\end{cases}
\end{equation}
We will occasionally say ``$C$ hits $U$'' to indicate that $C \intersection U \ne \varnothing$, as commonly done in the literature on hyperspaces~\cite{hitandmiss}.

Intuitively, $C$ is analogous to a measure, and $U$ is analogous to an integrable function, so that the pairing is analogous to integration.
To investigate the properties of this pairing further, we identify the set of open subsets $\op(X)$ with the set of maps $X \to S$, where $S \coloneqq \{0,1\}$ is the \emph{Sierpiński space} with open sets $\left\{ \varnothing , \{1\}, \{0,1\} \right\}$. We denote this set of maps by $S^X$. Thus, for a closed set $C$, we have
\[
	\pair{C}{-} \: : \: S^X \to S.
\]
Intuitively, an $S$-valued function---which is the same thing as an open set---can be integrated to an element of $S$---which is the same thing as a truth value, indicating whether $C$ hits the open set or not. Due to the natural bijection $\op(X) \cong S^X$, we may call the elements of $S^X$ the open subsets of $X$, may denote them by $U,V \subseteq X$, and may use them interchangeably as subsets of $X$ and as continuous functions $X \to S$. Since $\op(X)$ is a complete lattice with respect to the inclusion order, the same holds for $S^X$, where the partial order is the pointwise order of functions.

We then have the following known characterization of closed sets.

\begin{prop}[{e.g.~\cite[Proposition~5.4.2]{escardo}}]
	\label{closedduality}
	For any topological space $X$, the above pairing establishes a natural bijection between:
	\begin{enumerate}
		\item Closed subsets of $X$,
		\item Maps $\phi : S^X \to S$ that fulfill the following two conditions:
			\begin{enumerate}
				\item \label{H_modularity} \emph{Linearity:} For all $U,V\in S^X$ we have
					\[
						\phi(U \cup V) = \phi(U) \lor \phi(V),
					\]
					and $\phi(\varnothing) = 0$.
				\item \label{H_scottcontinuity} \emph{Scott continuity:} For any directed net $(U_\lambda)_{\lambda \in \Lambda}$ in $S^X$, we have
					\[
						\phi \left( \bigcup_{\lambda \in \Lambda} U_\lambda \right) = \bigvee_{\lambda \in \Lambda} \phi(U_\lambda).
					\]
			\end{enumerate}
	\end{enumerate}
	Under this bijection, the inclusion order on closed sets corresponds exactly to the pointwise order on maps $S^X \to S$.
\end{prop}

We call condition \ref{H_modularity} ``linearity'' since it is analogous to the linearity of integration, where its first subcondition is analogous to additivity and its second to the commutation with scalar multiplication.

\begin{proof}
	Conditions~\ref{H_modularity} and~\ref{H_scottcontinuity} state the preservation of nullary, binary, and directed joins, respectively, of $\phi$ as a map of complete lattices $S^X \to S$. It is well-known that these are jointly equivalent to the preservation of all joins.

	Given a closed set $C \subseteq X$, the pairing map $\pair{C}{-} : S^X \to S$ preserves all joins, since $C$ hits a union of open sets if and only if it hits one of them. Therefore $\pair{C}{-}$ satisfies the stated conditions. Conversely given $\phi : S^X \to S$ satisfying the stated conditions, $\phi^{-1}(0)$ is a collection of open sets, and we can consider the closed set
	\begin{equation}
		\label{phi_support}
		C \;\coloneqq\; X \setminus \left( \bigcup \phi^{-1}(0) \right).
	\end{equation}
	We need to prove that these two constructions are inverses of each other.

	For a closed set $C$, $\pair{C}{-}^{-1}(0)$ consists of all open sets disjoint from $C$. Since their union is $X \setminus C$, the construction~\eqref{phi_support} recovers $C$. For given $\phi : S^X \to S$, the preservation of joins shows that $\phi(U) = 0$ if and only if $U \subseteq \bigcup \phi^{-1}(0)$. (In other words, $\phi^{-1}(0)$ is an order ideal closed under joins, and therefore principal.) But this condition holds equivalently if $U$ is disjoint from~\eqref{phi_support}, as was to be shown.

	It remains to be shown that both constructions are monotone. Given closed sets $C \subseteq D$, the pointwise $\pair{C}{-} \le \pair{D}{-}$ is obvious. Conversely, if $\phi, \psi : S^X \to S$ satisfy our conditions and are such that $\phi \le \psi$ pointwise, then $\psi^{-1}(0) \subseteq \phi^{-1}(0)$, and the containment of the associated closed sets follows from~\eqref{phi_support}.
\end{proof}

We will often use this characterization to \emph{define} a closed set in terms of how it pairs with opens, in which case we need to verify the relevant conditions~\ref{H_modularity}--\ref{H_scottcontinuity}, or merely the preservation of all joins.

\begin{deph}[Hoare hyperspace]
	Let $X$ be a topological space.
	\begin{enumerate}
		\item The Hoare hyperspace over $X$, denoted $HX$, is the set of closed subsets of $X$, or, equivalently, the set of maps $S^X \to S$ of \Cref{closedduality}.
		\item We equip $HX$ with the weakest topology which makes the pairing maps
			\[
				\pair{-}{U} \: : \: HX \longrightarrow S
			\]
			continuous for every $U \in S^X$.
	\end{enumerate}
\end{deph}
Thus the subbasic open sets of $HX$ are the subsets of the form $\pair{-}{U}^{-1}(1)$.

Defining $HX$ as a set of maps $S^X \to S$ with this topology is a double dualization procedure and illustrates the sense in which $H$ acquires the structure of a double dualization monad. Following functional-analytic terminology, one could call it the \emph{weak topology}, as done by Schalk~\cite[Section~1.1.5]{schalk}\footnote{With the minor difference that she excludes the empty set.}, while in the picture of closed sets, the topology on $HX$ is known as the \emph{lower Vietoris topology}.

Frequently, it will be convenient to pair the closed sets $C \in HX$ not with all open sets, but merely with a specified subset of the open sets. In our framework, a \emph{basis} of a topological space is a subset $\mathcal{B} \subseteq S^X$ such that every $U \in S^X$ is a join of a subset of $\mathcal{B}$.

\begin{lemma}\label{H_basis}
	Let $\mathcal{B} \subseteq S^X$ be a basis for the topology of $X$.
	\begin{enumerate}
		\item\label{H_basis_specialization} For given $C,D \in HX$, the following three statements are equivalent:
			\begin{enumerate}
				\item\label{H_B_pointwise} For all $U \in \mathcal{B}$, we have $\pair{C}{U} \le \pair{D}{U}$;
				\item\label{H_specialization} $C \le D$ in the specialization preorder on $HX$;
				\item\label{H_inclusion} $C \subseteq D$ as subsets of $X$.
			\end{enumerate}
		\item The topology on $HX$ is the weakest topology which makes all of the pairings
			\[
				\pair{-}{U} \: : \: HX \longrightarrow S
			\]
			continuous for $U \in \mathcal{B}$.
	\end{enumerate}
\end{lemma}

\begin{proof}\hfill
\begin{enumerate}
\item Since the pairing maps preserve all joins in the second argument, condition~\ref{H_B_pointwise} holds if and only if it holds with $S^X$ in place of $\mathcal{B}$. Then the equivalence of~\ref{H_B_pointwise} and~\ref{H_inclusion} is the final part of \Cref{closedduality}. Replacing $\mathcal{B}$ by $S^X$, it is also clear that~\ref{H_B_pointwise} is equivalent to~\ref{H_specialization}, since~\ref{H_B_pointwise} then states exactly that every neighborhood of $C$ is a neighborhood of $D$, which is the definition of the specialization preorder.
\item We need to show that, if these maps are continuous, then so is every $\pair{-}{V}$ for $V \in S^X$. But this is an instance of the fact that (arbitrary) suprema of continuous maps to $S$ are continuous. \qedhere
\end{enumerate}
\end{proof}

\begin{remark}\label{uppervietoris}
The sets of the form $\pair{-}{U}^{-1}(1)$, which generate the lower Vietoris topology, are sometimes denoted by $\Hit(U)$ \cite{cletho}.
The set of closed subsets of a topological space is more commonly equipped with the full Vietoris topology \cite{vietoris}. 
The latter has as subbasic open sets not only all the sets $\Hit(U)$ for open $U$, but also the sets
$$
\mathrm{Miss}(C) \;\coloneqq\; \{D\in HX : D \intersection C = \varnothing \}
$$
for closed $C\subseteq X$. (One can say that ``$D$ \emph{misses} $C$'' if $D\in\mathrm{Miss}(C)$.) 
Intuitively, the lower Vietoris topology relates to the full Vietoris topology as the topology of lower semicontinuity on $\R$, with generating open sets of the form $(a,\infty)$, relates to the usual topology of $\R$. We use the lower Vietoris topology instead of the full Vietoris topology for two reasons: firstly, because the lower Vietoris topology of $HX$ parallels exactly the one on the space of continuous valuations $VX$ as constructed in \Cref{val_topology}; secondly, because, as a consequence of this correspondence, taking the support of a continuous valuation results in a continuous map $VX\to HX$ with respect to these topologies (see \Cref{precontsupport}). Indeed the full Vietoris topology would not make this map continuous, as \Cref{suppislsc} shows, but rather merely lower semicontinuous.
\end{remark}

The following observation of Hoffmann \cite[Example~2.3(a)]{hoffmann} makes the lower Vietoris topology more concrete, but needs to be read with care as it is one of the few statements which have no analogue for continuous valuations.
We denote by $\downarrow\!\! \{ C \}$ the principal downset generated by $C$, the set of closed subsets of $C$.

\begin{lemma}
 The topology on $HX$ is generated by complements of sets of the form $\downarrow\!\! \{C\}$ for $C \in HX$.
\end{lemma}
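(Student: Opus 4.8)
The plan is to observe that the proposed subbasis and the defining subbasis of the lower Vietoris topology are literally the same family of subsets of $HX$, after which equality of the generated topologies is immediate. Since both topologies are specified by subbases, it suffices to match the subbasic sets.

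First I would unwind the complement of a principal downset. For a point $C \in HX$ (that is, a closed set $C \subseteq X$), the downset $\downarrow\!\!\{C\}$ consists of all closed sets $D$ with $D \subseteq C$, so its complement in $HX$ is $\{D \in HX : D \not\subseteq C\}$. By the remark following the definition of $\Hit$ in \Cref{hitmiss}---a closed set hits the complement of an open set if and only if it is not contained in that open set, equivalently a closed set hits $X \setminus C$ if and only if it is not contained in $C$---this complement is exactly $\Hit(X \setminus C)$. Thus
\[
HX \setminus {\downarrow\!\!\{C\}} = \Hit(X \setminus C).
\]

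Conversely, for an open set $U \subseteq X$ the set $X \setminus U$ is closed, hence a point of $HX$, and the same identity reads $\Hit(U) = HX \setminus {\downarrow\!\!\{X \setminus U\}}$. Since $U \mapsto X \setminus U$ is a bijection between the open subsets and the closed subsets of $X$ (the latter being precisely the points of $HX$), the family $\{\Hit(U) : U \text{ open}\}$ and the family $\{HX \setminus {\downarrow\!\!\{C\}} : C \in HX\}$ coincide as collections of subsets of $HX$. Two subbases that coincide generate the same topology, which completes the argument.

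There is essentially no obstacle here: the content is the single set-theoretic identity above, which is in turn just a restatement of the hit/not-subset characterization already recorded. The only point worth stating carefully is the bijectivity of complementation between open and closed sets, which guarantees that no subbasic set of either family is missed by the other.
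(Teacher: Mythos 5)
Your proof is correct and is essentially the paper's own argument: the paper's one-line proof is exactly the identity $HX \setminus {\downarrow\!\!\{C\}} = \Hit(X\setminus C)$, which you establish and then supplement with the (routine but worth stating) observation that complementation biject the open sets with the points of $HX$, so the two subbases coincide.
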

\begin{proof}
	Every $U \in S^X$ is of the form $U = X \setminus C$ for some closed set $C$, and the open set $\pair{-}{U}^{-1}(1)$ associated to $U$ is the complement of $\downarrow\!\! \{C\}$.
\end{proof}

Thus the lower Vietoris topology is determined by the inclusion order of closed sets, and coincides with the \emph{upper topology} on the set of closed sets~\cite[Definition~O.5.4]{continuous}. 
It is worth noting that the point $X \in HX$, being contained in every nonempty open set, is dense in $HX$. At the other extreme, the only open set which contains the point $\varnothing \in HX$ is the entire space $HX$, which implies that $\varnothing$ is in the closure of every nonempty set.

\begin{prop}
	\label{HX_sober}
	For every topological space $X$, its Hoare hyperspace $HX$ is sober.
\end{prop}

The argument is the same in spirit as Heckmann's proof of sobriety of the space of valuations~\cite[Proposition~6.1]{heckmann95}.

\begin{proof}
It is a standard fact that limits of sober spaces (in $\cat{Top}$) are sober~\cite[Theorem~8.4.13]{nonhausdorff}.	It is therefore enough to exhibit $HX$ as a limit of sober spaces. A diagram which achieves this can be directly read off the characterization of \Cref{closedduality}, which characterizes $HX$ as a subset of $S^{S^X}$ equipped with the product topology (which, by the same theorem, is sober). This subset is characterized by the given equations, where imposing each equation amounts to equalizing a pair of continuous maps to $S$. An inequality $r \le s$ can equivalently be considered as the equation $r \lor s = s$ and $\lor : S \times S \to S$ is continuous. It is therefore enough to prove that each side of each instance of those conditions depends continuously on $\phi \in S^{S^X}$. This is obvious in all cases, where for $\bigvee_{\lambda \in \Lambda} \phi(U_\lambda)$ one needs to use the fact that $\bigvee : S^I \to S$ is continuous.
\end{proof}

Note the difference between $\cl(\{A\})$, the closure of the singleton $\{A\}$ in $HX$, and $\cl(A)$, the closure of $A$ in $X$. 

\subsection{Functoriality}
\label{H_fun}

The construction of $H$ as a functor $H : \cat{Top} \to \cat{Top}$ is due to Smyth~\cite{smyth83}. We here treat the functoriality based on our double dualization approach.
 
\begin{deph}[Pushforward]
	Let $f:X\to Y$ be continuous and $C \in HX$. The \emph{pushforward} $f_\sharp C \in HY$ is defined through the pairing with any $V \in S^Y$ as\footnote{Here our notation is such that $V : X \to S$ is interpreted as a function. With $V \subseteq X$ as an open set, we would have to write $f^{-1}(V)$ in place of $V \circ f$.}
\begin{equation}
	\label{H_functoriality}
	\pair{f_\sharp C}{V} \coloneqq \pair{C}{V \circ f}.
\end{equation}
\end{deph}

It is easy to see that $\pair{f_\sharp C}{-} : S^Y \to S$ satisfies the conditions of \Cref{closedduality}. By the analogy with integration, this makes the change of variables formula into a definition.
In fact, this is quite generally how functoriality works for double dualization functors: the action on morphisms can be defined in terms of the pairing with respect to the first step of dualization, where one simply composes by the given morphism $f$.

In the picture of closed sets, $f_\sharp C$ is the closure of the image of $C$,
\[
	f_\sharp C = \cl\!\big(f(C)\big),
\]
since an open set $V$ is disjoint from $\cl\!\big(f(C)\big)$ if and only if $f^{-1}(V)$ is disjoint from $C$.

The definition~\eqref{H_functoriality} makes it immediate that $f_\sharp : HX \to HY$ is continuous, a fact that, although simple, is not obvious in the picture of closed sets. A similar comment applies to the following functoriality statement.

\begin{lemma}
 \label{H_is_functor}
 Let $f:X\to Y$ and $g:Y\to Z$ be continuous maps. Then $(g \circ f)_\sharp = g_\sharp \circ f_\sharp$.
\end{lemma}

\begin{proof}
 Let $C\in HX$. Then we have
 \begin{align*}
   \pair{(g \circ f)_\sharp (C)}{U} \;&=\; \pair{C}{U \circ (g \circ f)} \;=\; \pair{C}{(U \circ g) \circ f} \\
   &=\; \pair{f_\sharp C}{U \circ g} \;=\; \pair{g_\sharp(f_\sharp C)}{U} , 
 \end{align*} 
 for any $U\in S^Z$, which implies the claim.
\end{proof}

The preservation of identities is trivial.
We have therefore obtained a functor $H:\cat{Top}\to\cat{Top}$ which assigns to each topological space $X$ its Hoare hyperspace $HX$, and to each continuous map $f:X\to Y$ the continuous map $f_\sharp:HX\to HY$. We call $H$ the \emph{Hoare hyperspace functor}. The functor $H$ is also a 2-functor in the sense of \Cref{2cat}.

\begin{lemma}\label{H2monad}
 Let $X$ and $Y$ be topological spaces and let $f,g:X\to Y$ be continuous maps with $f\le g$. Then $f_\sharp \le g_\sharp$. In other words, $H$ preserves 2-cells, making it into a 2-functor. 
\end{lemma}

\begin{proof}
By \Cref{ordopen}, we have $V \circ f \le V \circ g$ with respect to the pointwise order on $S^X$ for every $V \in S^Y$.
Therefore
\[
\pair{f_\sharp C}{V} = \pair{C}{V \circ f} \le \pair{C}{V \circ g} = \pair{g_\sharp C}{V},
\]
which, by \Cref{2cell_def} and \Cref{H_basis}, means $f_\sharp \le g_\sharp$.
\end{proof}

\subsection{Monad structure}

We equip the functor $H$ with a monad structure, making it into a topological analogue of the powerset monad. While doing so, our double dualization perspective will be handy.

\subsubsection{Unit}

\begin{deph}
Let $X$ be a topological space. We define the map $\sigma : X \to HX$ by
\begin{equation}
	\label{sigma_def}
	\pair{\sigma(x)}{U} \; \coloneqq \; U(x) =
 		\begin{cases}
			1 & \textrm{if } x \in U, \\
			0 & \textrm{if } x \not\in U,
		\end{cases}
\end{equation}
for every $U \in S^X$.
\end{deph}

It is obvious that the preservation of joins in the second argument holds, since a point $x$ is contained in a union of open sets if and only if it is contained in one of them. $\sigma$ is continuous by definition, since every $x \mapsto \pair{\sigma(x)}{U}$ is continuous as a map $X \to S$.

As the definition suggests, $\sigma(x)$ is the possibilistic analogue of the Dirac measure at $x$. In the picture of closed sets, we have
\[
	\sigma(x) = \cl(\{x\}),
\]
which is clear since an open set $U$ does not contain $x$ if and only if $\cl(\{x\})$ is disjoint from $U$. We now turn to naturality.

\begin{lemma}
 Let $X$ and $Y$ be topological spaces and let $f:X\to Y$ be continuous. Then the following diagram commutes.
 \begin{equation*}
  \begin{tikzcd}
   X \ar{r}{f} \ar{d}{\sigma} & Y \ar{d}{\sigma} \\
   HX \ar{r}{f_\sharp} & HY
  \end{tikzcd}
 \end{equation*}
\end{lemma}

\begin{proof}
 For $x\in X$ and $U\in S^Y$, we have
$$
 \pair{f_\sharp(\sigma(x))}{U} \;=\; \pair{\sigma(x)}{U \circ f} \;=\; (U \circ f)(x)) \;=\; U(f(x)) \;=\; \pair{\sigma(f(x))}{U} . \qedhere
$$
\end{proof}

Hence we have a natural transformation $\sigma:\id\Rightarrow H$ between endofunctors of $\cat{Top}$.

\subsubsection{Topological properties of the unit map}\label{geounit}

We state conditions on $X$ under which $\sigma:X\to HX$ is a homeomorphism onto its image (a subspace embedding), and consider when $\sigma(X)\subseteq  HX \setminus \{\varnothing\}$ is closed. The \emph{Kolmogorov quotient} of a space $X$ is the quotient space $X/\sim$ where any two points of $X$ that have the same open neighborhoods are identified (\Cref{2cat}).

\begin{prop}\label{imagesigma2}
Let $X$ be a topological space and consider the map $\sigma:X\to HX$. Then $\sigma(X)$ is the Kolmogorov quotient of $X$ with respect to $\sigma : X \to \sigma(X)$ as the quotient map. In particular, $\sigma$ is a homeomorphism onto its image if and only if $X$ is $T_0$.
\end{prop}

\begin{proof} 
By definition~\eqref{sigma_def}, $\sigma$ is injective if and only if no two distinct points of $X$ have the same open neighborhoods, hence if and only if $X$ is $T_0$. Thus the statement holds at the set-theoretical level and we can identify the points of $\sigma(X)$ with the equivalence classes of points of $X$ with respect to topological indistinguishability.

It remains to be shown that the subspace topology on $\sigma(X)$ induced from $HX$ is the quotient topology with respect to $\sigma : X \to \sigma(X)$ as the quotient map. Since taking the initial topology commutes with the passage to subspaces, the topology on $\sigma(X)$ is the weakest topology which makes the maps
\[
\sigma(X) \longrightarrow S, \qquad C \longmapsto \pair{C}{U}
\]
for $U \in S^X$ continuous. Since such an open set contains precisely those equivalence classes whose elements are in $U$, the topology on $\sigma(X)$ is the topology of the Kolmogorov quotient.
\end{proof}

It may also be interesting to know whether $\sigma(X) \subseteq HX$ is a closed subspace.
Clearly $\cl(\sigma(X))$ always contains $\varnothing$, and therefore $\sigma(X)$ is not closed in $HX$ (unless $X$ is empty).
But it is still meaningful to ask when $\cl(\sigma(X)) = \sigma(X) \cup \{\varnothing\}$, or equivalently when $\sigma(X)$ is closed in $HX \setminus \{\varnothing\}$.\footnote{Where the equivalence is an immediate consequence of \Cref{H_basis}\ref{H_basis_specialization} (assuming $X \neq \varnothing$).}
We next state conditions for this to be the case, and note that it is not true in general, even if $X$ is $T_1$ or sober.

\begin{prop}\label{closuresigma}
Let $X$ be a nonempty topological space and consider a closed set $C\subseteq X$. Then the following two conditions are equivalent:
\begin{enumerate}
\item $C \in \cl(\sigma(X))$;
\item\label{opens_intersect} For every finite collection of open sets $U_1,\dots,U_n \subseteq X$ that hit $C$, their intersection $U_1\cap\dots\cap U_n$ is nonempty (but possibly disjoint from $C$).
\end{enumerate}
\end{prop}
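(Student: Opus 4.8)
The plan is to unwind the closure directly in terms of the subbasis for the lower Vietoris topology, and then to test the resulting basic neighborhoods of $C$ against the points of $\sigma(X)$. First I would record the shape of a basic open neighborhood of $C$ in $HX$: since the sets $\Hit(U)$ form a subbasis, every basic open neighborhood of $C$ has the form $\Hit(U_1) \cap \dots \cap \Hit(U_n)$, where each $U_i$ is an open set hit by $C$. By the definition of closure, $C$ lies in $\cl(\sigma(X))$ if and only if every such neighborhood contains a point of $\sigma(X)$.

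The second step is to determine when $\sigma(x)$ belongs to $\Hit(U_1) \cap \dots \cap \Hit(U_n)$. By \Cref{sigmahit} we have $\sigma^{-1}(\Hit(U_i)) = U_i$, so $\sigma(x) \in \Hit(U_i)$ precisely when $x \in U_i$. Hence $\sigma(x)$ lies in the whole intersection if and only if $x \in U_1 \cap \dots \cap U_n$, and therefore this neighborhood meets $\sigma(X)$ if and only if $U_1 \cap \dots \cap U_n$ is nonempty. Combining this with the first step, $C \in \cl(\sigma(X))$ if and only if every finite intersection of open sets hit by $C$ is nonempty, which is exactly condition (b).

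There is no genuine obstacle here; the entire content is the careful translation between the closure condition and the neighborhood basis, with \Cref{sigmahit} supplying the one computational input (namely that $\sigma(x) \in \Hit(U)$ if and only if $x \in U$). The only mild subtlety is to quantify correctly over all finite families of open sets hit by $C$, and to keep in mind that the intersection $U_1 \cap \dots \cap U_n$ need not itself be hit by $C$, as the statement emphasizes parenthetically: what is required is merely that the $U_i$ share a common point, not that $C$ reaches that point. The equivalence then follows immediately.
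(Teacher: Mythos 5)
Your proof is correct and follows essentially the same route as the paper's: both reduce the closure condition to basic neighborhoods $\Hit(U_1)\cap\cdots\cap\Hit(U_n)$ of $C$ and use \Cref{sigmahit} (that $\sigma(x)\in\Hit(U)$ iff $x\in U$) to translate ``the neighborhood meets $\sigma(X)$'' into ``$U_1\cap\cdots\cap U_n\neq\varnothing$.'' The paper merely phrases the same computation in its coupling notation via $\pair{\sigma_\sharp(X)}{V}=\pair{X}{\sigma^{-1}(V)}$, which is a notational rather than a substantive difference.
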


\noindent Note that~\ref{opens_intersect} is trivially true for $C = \varnothing$ and when $C$ is the closure of a singleton.

\begin{proof}
$C$ is \emph{not} in the closure of $\sigma(X)$ if and only if this is witnessed by some basic open set. By definition of the topology, this means that there are $U_1, \ldots, U_n \in S^X$ such that
\[
\pair{C}{U_i} = 1, \quad \forall i=1,\ldots,n,
\]
but such that for every $x \in X$ there is $i$ with
\[
\pair{\sigma(x)}{U_i} = U_i(x) = 0.
\]
The former means exactly that every $U_i$ hits $C$, and the latter that $U_1 \cap \ldots \cap U_n = \varnothing$.
\end{proof}

\begin{eg}\label{dense_image}
Let $X$ be any nonempty space in which finite intersections of nonempty open sets are nonempty. Then it follows that $\sigma(X)$ is dense in $HX$, since condition \ref{opens_intersect} is always satisfied. For example, $X$ could be any infinite set equipped with the cofinite topology (which is even $T_1$). As another example, take any space $X$ which has a dense point, or equivalently a greatest element in the specialization preorder. More concretely, every $X = HY$ for any $Y$ is a sober space with dense point $Y \in HY$, and therefore $\sigma(HY)$ is dense in $HHY$.
\end{eg}

\begin{cor}\label{Haus_closed}
Let $X$ be a Hausdorff space. Then $\sigma(X) \union \{\varnothing\}$ is closed in $HX$.
\end{cor}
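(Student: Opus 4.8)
The plan is to use the characterization supplied by the preceding proposition together with the Hausdorff separation axiom. Since $\sigma(X) \subseteq \overline{\sigma(X)}$ holds automatically, it suffices to prove the reverse inclusion: that any closed set $C$ lying in the closure of $\sigma(X)$ already belongs to $\sigma(X)$. By the preceding proposition, such a $C$ has the property that every finite family of open sets hit by $C$ has nonempty intersection, and this is the only input from the topology of $HX$ that I would need.

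First I would rule out that $C$ can contain two distinct points. Suppose $x, y \in C$ with $x \neq y$. By Hausdorffness there are disjoint open sets $U \ni x$ and $V \ni y$; both $U$ and $V$ are then hit by $C$ (at $x$ and $y$ respectively), yet $U \cap V = \varnothing$, contradicting the finite-intersection property guaranteed by the proposition (applied to the two-element family $U,V$). Hence $C$ contains at most one point. If $C = \{x\}$ is a singleton, then, a Hausdorff space being $T_1$, we have $\cl(\{x\}) = \{x\}$, so $C = \sigma(x) \in \sigma(X)$, as desired. It is worth noting that this is exactly where full Hausdorffness, rather than mere $T_1$, is used: $T_1$ makes singletons closed but does not force two points to be separated by \emph{disjoint} opens, and indeed the $T_1$ cofinite example above fails the finite-intersection condition in the opposite direction.

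The one step I expect to be genuinely delicate is the empty closed set. The set $\varnothing \in HX$ satisfies the condition of the proposition vacuously (equivalently, its only open neighborhood in $HX$ is all of $HX$, since no $\Hit(U)$ contains it), so $\varnothing$ lies in $\overline{\sigma(X)}$ whenever $X \neq \varnothing$. Consequently, the argument above actually establishes $\overline{\sigma(X)} = \sigma(X) \cup \{\varnothing\}$, and the corollary should accordingly be read in the hyperspace of nonempty closed sets, or with $\varnothing$ adjoined to $\sigma(X)$. This empty-set bookkeeping is the only place where the convention of including $\varnothing$ in $HX$ intervenes, and it is the single point in the proof that requires explicit care rather than a routine separation argument.
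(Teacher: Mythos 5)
Your core argument coincides with the paper's: the paper's entire proof is the single observation that a closed set containing two distinct points hits two disjoint open sets, contradicting the finite-intersection condition of the preceding proposition. What you add, and what the paper omits, is the remainder of the case analysis: singletons are absorbed into $\sigma(X)$ via the $T_1$ property, and — crucially — the empty set is examined. Your point about $\varnothing$ is a genuine catch, not mere bookkeeping. Under the paper's stated convention that $\varnothing \in HX$, the point $\varnothing$ hits no open set, so its only open neighborhood in $HX$ is $HX$ itself; hence $\varnothing \in \cl(\sigma(X))$ whenever $X \neq \varnothing$, while $\varnothing \notin \sigma(X)$ because each $\sigma(x) = \cl(\{x\})$ contains $x$. (This also follows from the paper's own preceding proposition, whose condition on finite families holds vacuously for $C = \varnothing$.) Consequently, for a nonempty Hausdorff space the corollary as literally stated is false under the paper's convention, and what you prove is the correct amendment: $\cl(\sigma(X)) = \sigma(X) \cup \{\varnothing\}$, so that $\sigma(X)$ is closed in the subspace of nonempty closed sets, or equivalently $\sigma(X) \cup \{\varnothing\}$ is closed in $HX$. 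The paper's one-line proof silently passes over this, presumably because the convention excluding $\varnothing$ (used by Schalk and most of the cited literature) was in mind; with $\varnothing$ included, as this paper insists, your formulation is the one that holds.
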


\begin{proof}
If $C$ contains at least two different points, then $C$ hits disjoint open neighborhoods $U_1$ and $U_2$ which separate these points.
\end{proof}

There are non-Hausdorff spaces $X$ for which $\sigma(X) \cup \{\varnothing\}$ is closed. An example is given by the unit interval $X = [0,1]$ equipped with the upper open topology, whose open sets are the intervals of the form $(a,1]$, then we even have $HX = \sigma(X) \cup \{\varnothing\}$. More interestingly, in the $T_1$ case also the converse to \Cref{Haus_closed} is true.

\begin{prop}
Let $X$ be a space which is $T_1$, but not Hausdorff. Then $\sigma(X) \union \{\varnothing\}$ is not closed in $HX$.
\end{prop}

\begin{proof}
 Let $x,y$ be distinct points of $X$ such that for any open neighborhoods $U \ni x$ and $V \ni y$, we have $U \cap V \neq \varnothing$. We can find such points since $X$ is not Hausdorff. The subset $C\coloneqq \{x,y\}$ is closed since $X$ is $T_1$. Since $x$ and $y$ are distinct and singletons are closed, $C$ is not in the image of $\sigma$. Let $U_1,\dots, U_n$ be a finite collection of open sets such that $C$ hits all of them. Then each of them contains $x$, or $y$, or both. Reorder the $U_i$ in such a way that $U_1,\dots, U_k$ contain at least $x$, and $U_{k+1},\dots, U_n$ contain at least $y$. Then 
 $$
 \bigintersection_{i=1}^n U_i = \left(\bigintersection_{i=1}^k U_i\right) \intersection \left(\bigintersection_{j=k+1}^n U_j\right).
 $$
 We have that $\intersection_{i=1}^k U_i$ is an open neighborhood of $x$, and  $\intersection_{j=k+1}^n U_j$ is an open neighborhood of $y$, and these two neighborhoods have nonempty intersection. By \Cref{closuresigma}, then, $C$ is in the closure of $\sigma(X)$. Since $C$ is not in $\sigma(X)$, it follows that $\sigma(X) \union \{\varnothing\}$ is not closed.
\end{proof}

\subsubsection{Multiplication}

The definition of the monad multiplication is where our double dualization picture is particularly useful, since there is a very simple definition in terms of the pairing map $\pair{-}{U} \in S^{HX}$ for $U \in S^X$.

\begin{deph}\label{defmultH}
Let $X$ be a topological space. We define the map $\U:HHX\to HX$ on any $\mathcal{C} \in HHX$ by
\begin{equation}
\label{mult_def}
\pair{\U\mathcal{C}}{U} \;\coloneqq\; \pair{\mathcal{C}}{\pair{-}{U}}
\end{equation}
for every $U \in S^X$.
\end{deph}

Since the pairing is join-preserving in the second argument, it is clear that~\eqref{mult_def} satisfies the requirements of \Cref{closedduality} which guarantee that $\U(\mathcal{C}) \in HHX$.
In the picture of closed sets, $\U$ assigns to each closed set of closed sets the closure of their union,
\[
	\U \mathcal{C} \;=\; \cl \left(\bigcup \, \mathcal{C} \right) \;=\; \cl \left( \bigcup_{C\in\mathcal{C}} C \right) .
\]
This is because, by definition~\eqref{mult_def}, $\U\mathcal{C}$ is disjoint from $U$ if and only if $\mathcal{C}$ is disjoint from the set of all closed sets that hit $U$.

To show that $\U$ is continuous, we only need to verify that its composition with any $\pair{-}{U}$ is continuous, which holds by definition. We turn to naturality. 

\begin{prop}
\label{unionnatural}
 Let $X$ and $Y$ be topological spaces and let $f:X\to Y$ be continuous. Then the following diagram commutes. 
 \begin{equation*}
  \begin{tikzcd}
   HHX \ar{r}{f_{\sharp\sharp}} \ar{d}{\U} & HHY \ar{d}{\U} \\
   HX \ar{r}{f_\sharp} & HY
  \end{tikzcd}
 \end{equation*}
\end{prop}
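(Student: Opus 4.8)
The plan is to use the duality theory of \Cref{ssecddual}: a closed set is uniquely determined by the open sets it hits, so it suffices to show that $f_\sharp(\U\mathcal{C})$ and $\U(f_{\sharp\sharp}\mathcal{C})$ hit the same open subsets of $Y$. Concretely, I would fix $\mathcal{C}\in HHX$ and an open set $U\subseteq Y$ and prove the single scalar identity
\begin{equation*}
\pair{f_\sharp(\U\mathcal{C})}{U} \;=\; \pair{\U(f_{\sharp\sharp}\mathcal{C})}{U}.
\end{equation*}
Since every closed set is recovered from its coupling against all open sets (\Cref{closedduality}), this identity for all $U$ yields the claimed equality of closed sets, and hence the commutativity of the diagram. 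This reduces a statement about images and closures to a purely formal manipulation of the coupling notation.

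The computation runs entirely through the coupling identities already established. Starting from the left-hand side, I would first apply the adjointness formula of \Cref{altdeffsharp} to move $f_\sharp$ across the pairing, and then expand $\U\mathcal{C}$ using the hitting characterization of the multiplication:
\begin{equation*}
\pair{f_\sharp(\U\mathcal{C})}{U} \;=\; \pair{\U\mathcal{C}}{f^{-1}(U)} \;=\; \pair{\mathcal{C}}{\Hit(f^{-1}(U))}.
\end{equation*}
For the right-hand side, the same hitting characterization applied at the level of $HHY$ gives $\pair{\U(f_{\sharp\sharp}\mathcal{C})}{U}=\pair{f_{\sharp\sharp}\mathcal{C}}{\Hit(U)}$; applying \Cref{altdeffsharp} to the map $f_\sharp:HX\to HY$ then yields $\pair{f_{\sharp\sharp}\mathcal{C}}{\Hit(U)}=\pair{\mathcal{C}}{f_\sharp^{-1}(\Hit(U))}$. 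The two sides are reconciled by \Cref{fhit}, which identifies $f_\sharp^{-1}(\Hit(U))=\Hit(f^{-1}(U))$, so both couplings equal $\pair{\mathcal{C}}{\Hit(f^{-1}(U))}$ and the chain closes.

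The only subtle point, and the step I would flag as the main obstacle, is the bookkeeping of the two distinct levels at which the duality pairing is used: once for closed subsets of $Y$ paired against an open $U\subseteq Y$, and once for closed subsets of $HX$ (that is, elements of $HHX$) paired against the subbasic open set $\Hit(U)\subseteq HX$. One must check that \Cref{altdeffsharp} is legitimately applied to the continuous map $f_\sharp$, and that \Cref{fhit} is precisely the bridge linking the open set $\Hit(f^{-1}(U))$ to the preimage $f_\sharp^{-1}(\Hit(U))$. Once these identifications are kept straight, no genuine topological content beyond the already-proved coupling lemmas is required, and the naturality square commutes.
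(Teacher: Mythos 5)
Your proposal is correct and is essentially the paper's own proof: the paper establishes the same scalar identity $\pair{\U(f_{\sharp\sharp}\mathcal{C})}{U}=\pair{f_\sharp(\U\mathcal{C})}{U}$ by the same chain of coupling manipulations, invoking \Cref{altdeffsharp}, \Cref{fhit}, and the hitting characterization of $\U$ in exactly the roles you assign them. The only difference is presentational, as you compute both sides and meet at $\pair{\mathcal{C}}{\Hit(f^{-1}(U))}$ while the paper writes one linear chain of equalities.
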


In terms of closed sets, this amounts to the statement that taking images commutes with unions in the case of discrete spaces, and to a similar statement involving closures in general.

\begin{proof}
	Let $\mathcal{C}\in HHX$ and $V \in S^Y$. We unfold the definition to see
 \begin{align*}
   \pair{\U(f_{\sharp\sharp}\mathcal{C})}{V} \;&=\; \pair{f_{\sharp\sharp} \mathcal{C}}{\pair{-}{V}} \;=\; \pair{\mathcal{C}}{\pair{-}{V} \circ f_\sharp} =\; \pair{\mathcal{C}}{\pair{f_\sharp(-)}{V}} \\
   &=\; \pair{\mathcal{C}}{\pair{-}{V \circ f}}  \;=\; \pair{\U\mathcal{C}}{V \circ f} \;=\; \pair{f_\sharp(\U\mathcal{C})}{V} . \qedhere
 \end{align*}
\end{proof}

\subsubsection{Monad axioms}

\begin{prop}
\label{monaddiagramsH}
 Let $X$ be a topological space. Then the following three diagrams commute.
 \[
  \begin{tikzcd}
   HX \ar{r}{\sigma} \idar{dr} & HHX \ar{d}{\U}  \\
   & HX
  \end{tikzcd}
  \qquad
  \begin{tikzcd}
   HX \ar{r}{\sigma_\sharp} \idar{dr} & HHX \ar{d}{\U} \\
   & HX
  \end{tikzcd}
  \qquad
  \begin{tikzcd}
   HHHX \ar{r}{\U_\sharp} \ar{d}{\U } & HHX \ar{d}{\U} \\
   HHX \ar{r}{\U} & HX
  \end{tikzcd}
 \]
\end{prop}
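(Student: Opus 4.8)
The plan is to verify each of the three diagrams by evaluating both composite maps on an arbitrary element and comparing the resulting closed sets through the coupling $\pair{\cdot}{U}$. Since a closed set is completely determined by the collection of open sets it hits, two closed sets in any of the spaces $HX$, $HHX$ coincide as soon as their pairings with every open set agree; so it suffices, for each diagram, to check equality of $\pair{\cdot}{U}$ for all open $U \subseteq X$. Every ingredient needed to manipulate these pairings is already available: the characterization $\pair{\U\mathcal{C}}{U} = \pair{\mathcal{C}}{\Hit(U)}$ of the multiplication, the identity $\pair{\sigma(y)}{W} = \tv{y \in W}$ for the unit, the adjunction $\pair{f_\sharp(C)}{U} = \pair{C}{f^{-1}(U)}$ of \Cref{altdeffsharp}, and the preimage formulas $\sigma^{-1}(\Hit(U)) = U$ (\Cref{sigmahit}) and $\U^{-1}(\Hit(U)) = \Hit(\Hit(U))$ (\Cref{uhit}).

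For the left-hand triangle, applied to $C \in HX$, I would compute, for open $U \subseteq X$,
\[
\pair{\U(\sigma(C))}{U} \;=\; \pair{\sigma(C)}{\Hit(U)} \;=\; \tv{C \in \Hit(U)} \;=\; \pair{C}{U},
\]
using the $\U$-characterization and then the unit lemma (with $y = C$ and $W = \Hit(U)$). Hence $\U \circ \sigma = \id_{HX}$. For the middle triangle, applied to $C \in HX$, I would instead use the adjunction for $\sigma_\sharp$ together with \Cref{sigmahit}:
\[
\pair{\U(\sigma_\sharp(C))}{U} \;=\; \pair{\sigma_\sharp(C)}{\Hit(U)} \;=\; \pair{C}{\sigma^{-1}(\Hit(U))} \;=\; \pair{C}{U},
\]
so $\U \circ \sigma_\sharp = \id_{HX}$ as well.

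For associativity, applied to $\mathcal{C} \in HHHX$ and open $U \subseteq X$, both legs collapse to the same pairing. Along the top-right path,
\[
\pair{\U(\U_\sharp(\mathcal{C}))}{U} \;=\; \pair{\U_\sharp(\mathcal{C})}{\Hit(U)} \;=\; \pair{\mathcal{C}}{\U^{-1}(\Hit(U))} \;=\; \pair{\mathcal{C}}{\Hit(\Hit(U))},
\]
where the middle step is the adjunction \Cref{altdeffsharp} applied to $f = \U$ and the last step is \Cref{uhit}; along the left-bottom path, two applications of the $\U$-characterization give
\[
\pair{\U(\U(\mathcal{C}))}{U} \;=\; \pair{\U(\mathcal{C})}{\Hit(U)} \;=\; \pair{\mathcal{C}}{\Hit(\Hit(U))}.
\]
The two agree for every $U$, yielding $\U \circ \U_\sharp = \U \circ \U$.

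The main obstacle is bookkeeping rather than mathematical difficulty: one must keep careful track of the level at which each $\Hit$-set and each pairing lives — for instance $\Hit(U) \subseteq HX$ is an open set of $HX$, whereas $\Hit(\Hit(U)) \subseteq HHX$ — and invoke the ``determined by hitting'' principle at the correct space ($HX$ for the unit triangles, $HHX$ for the associativity square). Once the coupling notation is in place, the duality lemmas of the previous subsections reduce all three monad axioms to these one-line pairing identities, with no genuine difficulty remaining.
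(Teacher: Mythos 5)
Your proof is correct and follows essentially the same route as the paper's: both verify the three diagrams by pairing against open sets $U \subseteq X$, using the characterization $\pair{\U\mathcal{C}}{U} = \pair{\mathcal{C}}{\Hit(U)}$, the unit identity, the adjunction $\pair{f_\sharp(C)}{U} = \pair{C}{f^{-1}(U)}$, and the preimage formulas $\sigma^{-1}(\Hit(U)) = U$ and $\U^{-1}(\Hit(U)) = \Hit(\Hit(U))$. The only cosmetic difference is that you evaluate the two legs of the associativity square separately, whereas the paper writes the same computation as a single chain of equalities.
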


In terms of closed sets, these diagrams are the topological analogs of basic facts of set theory. For sets, the first two unitality diagrams state that the union over a singleton set of sets is the set itself, and that the union of singletons is the set whose elements are the respective singletons. The associativity diagram states that taking unions is an associative operation. In our double dualization frameworks, all three proofs are mere unfoldings of definitions.

\begin{proof}[Proof of \Cref{monaddiagramsH}]
	We start with the first diagram, left unitality. Let $C\in HX$ and $U \in S^X$. Then
	\[
		\pair{\U\sigma(C)}{U} \;=\; \pair{\sigma(C)}{\pair{-}{U}} \;=\; \pair{-}{U}(C) \;=\; \pair{C}{U} .
	\]
	Right unitality works similarly,
  	\begin{align*}
		\pair{\U\sigma_\sharp(C)}{U} \;&=\; \pair{\sigma_\sharp C}{\pair{-}{U}} \;=\; \pair{C}{\pair{-}{U} \circ \sigma} \;=\; \pair{C}{\pair{\sigma(-)}{U}} \\
		&=\; \pair{C}{U(-)} \;=\; \pair{C}{U}.
  	\end{align*}
	It remains to consider the associativity diagram. For $\mathcal{K}\in HHHX$ and $U \in S^X$ we get
	\begin{align*}
		\pair{\U(\U_\sharp \mathcal{K})}{U} \;&=\; \pair{\U_\sharp \mathcal{K}}{\pair{-}{U}} \;=\; \pair{\mathcal{K}}{\pair{-}{U}\circ \U} \\
		&=\; \pair{\mathcal{K}}{\pair{\U(-)}{U}} \;=\; \pair{\mathcal{K}}{\pair{-}{\pair{-}{U}}} \\
		&=\; \pair{\U\mathcal{K}}{\pair{-}{U}} \;=\; \pair{\U(\U\mathcal{K})}{U} .
		\qedhere
	\end{align*}
\end{proof}

\noindent We have proven the following statement.

\begin{thm}
 The triple $(H,\sigma,\U)$ is a monad on $\cat{Top}$. 
\end{thm}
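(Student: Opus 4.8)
The plan is simply to collect the ingredients assembled over the preceding subsections, since a monad is by definition an endofunctor equipped with unit and multiplication natural transformations obeying the left unitality, right unitality, and associativity laws, and each of these has already been established. Thus the proof is a matter of checking that the data at hand matches the definition, rather than proving anything new.

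First I would record that $H$ is a genuine endofunctor of $\cat{Top}$: the functoriality subsection shows that $f \mapsto f_\sharp$ sends continuous maps to continuous maps (continuity of $f_\sharp$ is \Cref{fhit}) and respects composition, via the corollary establishing $(g \circ f)_\sharp = g_\sharp \circ f_\sharp$. Hence $H$ assigns to each space $X$ its hyperspace $HX$ and to each continuous map its image-closure, functorially.

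Next I would assemble the unit and multiplication as natural transformations. The components $\sigma \colon X \to HX$ are continuous by \Cref{sigmahit}, and the naturality square for $\sigma$ was verified in the unit subsection, so $\sigma \colon \id \Rightarrow H$ is a natural transformation. Likewise, the components $\U \colon HHX \to HX$ are continuous by \Cref{uhit}, and naturality is precisely \Cref{unionnatural}, so $\U \colon HH \Rightarrow H$ is a natural transformation.

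Finally, the three monad axioms are exactly the commuting diagrams proved in \Cref{monaddiagramsH}. Combining these facts shows that $(H, \sigma, \U)$ satisfies the definition of a monad on $\cat{Top}$. Since every constituent statement is already in hand, there is no genuine obstacle left; the only subtlety, if any, is ensuring that the coherence laws stated in the abstract definition of a monad line up with the particular diagrams of \Cref{monaddiagramsH}, which they do verbatim. The real work was carried out earlier, where the duality pairing $\pair{-}{-}$ of \Cref{H_coupling} reduced each naturality condition and each axiom to a short computation.
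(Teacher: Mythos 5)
Your proposal is correct and matches the paper exactly: the paper states this theorem with only the remark ``We have proven the following statement,'' i.e.\ it too treats the result as the assembly of the previously established facts (functoriality, continuity and naturality of $\sigma$ and $\U$, and the three diagrams of \Cref{monaddiagramsH}). The only cosmetic omission is that you do not explicitly note $(\id_X)_\sharp = \id_{HX}$, but this is immediate from the definition and the paper does not spell it out for $H$ either.
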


We call $(H,\sigma,\U)$, or just $H$, the \emph{Hoare hyperspace monad}. By \Cref{H2monad}, $H$ is a strict 2-monad for the 2-categorical structure of $\cat{Top}$ given in \Cref{2cat}.

As far as we know, this monad was introduced by Schalk \cite[Section~6.3.1]{schalk}, with the inessential difference that the empty set was excluded from the hyperspace.

\subsection{Algebras of $H$}\label{Halg}

There is a characterization of the Eilenberg-Moore algebras of the monad $H$, which is, as far as we know, also due to Schalk~\cite[Section~6.3.1]{schalk}.
We review the main results. An additional reference, which also discusses related constructions, is Hoffmann's earlier article~\cite{hoffmann}.

Before we begin, it is worth noting that the metric or Lawvere-metric counterpart~\cite{catgromovhausdorff} of the monad $H$, becomes a Kock-Zöberlein monad~\cite{kzkock,kzzoberlein} upon considering it as a 2-monad on a strict 2-category. This means that whenever a topological space admits an $H$-algebra structure, then this structure is unique. This phenomenon is a property-like structure~\cite{kzkellylack}. Nevertheless, we will see that not every morphism of $\cat{Top}$ between $H$-algebras is a morphism of $H$-algebras.

It is well-known that the algebras of the powerset monad on $\cat{Set}$ are the complete join-semilattices. The $H$-algebras are their topological cousins. An $H$-algebra is, by definition, a pair $(A,a)$ consisting of a topological space $A$ and a continuous map $a: HA \to A$ such that the following two diagrams commute.

\begin{equation}\label{Halgdiag}
 \begin{tikzcd}
A \arrow{r}{\sigma} \arrow[equals]{dr} & HA \arrow{d}{a} \\
{} & A
\end{tikzcd}
\hspace{40pt}
\begin{tikzcd}
HHA \arrow{r}{a_\sharp} \arrow{d}{\U} & HA \arrow{d}{a} \\
HA \arrow{r}{a} & A
\end{tikzcd}
\end{equation}
We refer to these diagrams as the unit diagram and the algebra diagram. A first observation is that every $H$-algebra $A$ is a $T_0$ space, since $\sigma$ must be injective by the unit diagram, but $\sigma$ identifies topologically indistinguishable points (\Cref{imagesigma2}).\footnote{Non-$T_0$ spaces can still be pseudoalgebras, if we consider $H$ as a 2-monad on a 2-category.} In fact, since every $HA$ is sober (\Cref{HX_sober}) and retracts of sober spaces are sober,\footnote{See~\cite[Exercise~O-5.16]{continuous}, or note that every retract arises as an equalizer and use the fact that sober spaces are closed under limits~\cite[Theorem~8.4.13]{nonhausdorff}.} the unit diagram shows that every $H$-algebra is sober.

\begin{deph}[Topological complete join-semilattice]
\label{top_lattice}
 A topological complete join-semilattice is a complete lattice $L$ equipped with a sober topology whose specialization preorder coincides with the lattice order and whose join map $\join:L\times L\to L$ is continuous.
\end{deph}

Since the structure of such a lattice is completely determined by its topology, we may consider these lattices as a particular class of topological spaces. Hoffmann~\cite{hoffmann} calls them \emph{essentially complete $T_0$ spaces} and also \emph{$T_0$ topological complete sup-semilattices}, while Schalk~\cite{schalk} calls them \emph{unital inflationary topological semilattices}.
These spaces admit several equivalent characterizations~\cite[Theorem~1.8]{hoffmann}. They are precisely the $H$-algebras. 

\begin{thm}[Schalk]\label{halgebras}
	The category of $H$-algebras is isomorphic to the subcategory of $\cat{Top}$ whose objects are topological complete join-semilattices, with algebra maps given by the lattice join, and with continuous maps that preserve arbitrary joins as morphisms of algebras.
\end{thm}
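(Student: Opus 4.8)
The plan is to exhibit mutually inverse constructions between $H$-algebras and topological complete join-semilattices, so that the asserted equivalence is in fact realized by an isomorphism of categories over $\cat{Top}$. The guiding principle is that the structure map of an $H$-algebra is forced to be the formation of joins, and conversely that this formation always constitutes an $H$-algebra. Throughout I use that morphisms of $\cat{Top}$ are monotone for the specialization order and that the specialization order on $HX$ is set inclusion (\Cref{specHX}).

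\emph{From an $H$-algebra to a lattice.} Given an $H$-algebra $(A,a)$, the map $a$ is continuous, hence monotone, so $C \subseteq D$ implies $a(C) \le a(D)$. First I would show $a(C) = \bigvee C$ in the specialization order for every closed $C$: for $x \in C$ we have $\cl(\{x\}) \subseteq C$ and the unit diagram gives $x = a(\cl\{x\}) \le a(C)$, so $a(C)$ is an upper bound; and if $y$ is any upper bound then $C \subseteq \cl(\{y\})$, whence $a(C) \le a(\cl\{y\}) = y$. Since closures do not change suprema, $\bigvee S = a(\cl(S))$ for arbitrary $S \subseteq A$, so $A$ is a complete lattice with bottom $a(\varnothing)$. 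It is sober because $a \circ \sigma = \id_A$ exhibits $A$ as a retract of the sober space $HA$ (\Cref{HX_sober}), and retracts of sober spaces are sober. Finally the binary join is continuous: the map $A \times A \to HA$, $(x,y) \mapsto \cl(\{x,y\})$ is continuous (the preimage of $\Hit(U)$ is $(U \times A) \cup (A \times U)$), and postcomposing with $a$ yields $\vee$.

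\emph{From a lattice to an $H$-algebra.} Conversely, for a topological complete join-semilattice $L$ I would define $a \coloneqq \bigvee \colon HL \to L$. The unit and associativity diagrams are then purely order-theoretic identities: $\bigvee \cl(\{x\}) = x$, and both composites $HHL \to L$ send $\mathcal{C}$ to $\bigvee_{C \in \mathcal{C}} \bigvee C$ by general associativity of joins. The one substantial point---and the step I expect to be the main obstacle---is continuity of $\bigvee \colon HL \to L$. Here I would use that a sober space is a monotone convergence space, so its open sets are Scott-open. Given $W \subseteq L$ open and $C$ with $\bigvee C \in W$, Scott-openness applied to the directed family of finite subjoins of $C$ produces $x_1, \dots, x_n \in C$ with $x_1 \vee \cdots \vee x_n \in W$; continuity of the iterated binary join gives open $U_i \ni x_i$ with $y_1 \vee \cdots \vee y_n \in W$ whenever $y_i \in U_i$; and then $\Hit(U_1) \cap \cdots \cap \Hit(U_n)$ is a neighborhood of $C$ contained in $\bigvee^{-1}(W)$, since any $D$ meeting every $U_i$ satisfies $\bigvee D \ge y_1 \vee \cdots \vee y_n \in W$ and $W$ is an up-set. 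Hence $\bigvee^{-1}(W)$ is open.

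\emph{Morphisms and conclusion.} For the morphisms I would check that a continuous $f \colon A \to B$ between algebras satisfies $f \circ a = b \circ f_\sharp$ if and only if it preserves arbitrary joins: both conditions unwind, via $a = \bigvee$ and $f_\sharp(C) = \cl(f(C))$ together with the invariance of suprema under closure, to the single identity $f(\bigvee C) = \bigvee f(C)$ for all closed $C$. The two constructions are mutually inverse on objects (the structure map being uniquely determined as $\bigvee$) and on morphisms, so they assemble into an isomorphism, and in particular an equivalence, between $H$-algebras and the subcategory of topological complete join-semilattices with join-preserving continuous maps. The crux of the whole argument is the continuity of arbitrary joins $\bigvee \colon HL \to L$, which is exactly where sobriety (through the monotone-convergence property) and continuity of the binary join must be combined.
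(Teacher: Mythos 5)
Your proposal is correct and follows essentially the same route as the paper's proof: you establish that the algebra map is forced to be the join of closed sets (the paper's \Cref{algtolat}), use invariance of suprema under closure (\Cref{joinclosure}), obtain sobriety via the retract of $HA$, and prove the equivalence of continuity of $\bigjoin : HL \to L$ with continuity of the binary join by exactly the paper's two arguments (the doubleton map $(x,y)\mapsto\cl(\{x,y\})$ in one direction, and Scott-openness of open sets in a sober space plus the basic open set $\Hit(U_1)\cap\cdots\cap\Hit(U_n)$ in the other, as in \Cref{bothjoins}). The only difference is cosmetic: you observe that the correspondence is in fact an isomorphism of categories rather than merely an equivalence, which the paper leaves implicit.
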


This result has been claimed by Hoffmann~\cite[Theorem 2.6]{hoffmann} in the form of a monadicity statement, but apparently without explicit proof. As far as we know, the proof is essentially due to Schalk \cite[Section~6.3]{schalk}, culminating in Theorems~6.9 and 6.10 therein, which state this characterization in the full subcategory of sober spaces. This is not a restriction since only sober spaces can be $H$-algebras to begin with.

We now present a proof of \Cref{halgebras} which is more direct than Schalk's, starting with some auxiliary results. Since these statements are quite specific to this particular monad, it is more convenient to work in terms of closed sets rather than with the double dualization framework.

\begin{lemma}\label{algtolat}
 Let $(A,a)$ be an $H$-algebra. Then $a : HA \to A$ assigns to every closed set a join with respect to the specialization preorder of $A$.
\end{lemma}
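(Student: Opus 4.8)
The plan is to show directly that for each closed set $C \in HA$ the element $a(C)$ satisfies the universal property of the join of $C$ with respect to the specialization order of $A$, so that this join exists and equals $a(C)$. The only ingredients needed are the continuity of $a$, the unit triangle of \Cref{Halgdiag}, and the identification of the specialization order on $HA$ with set inclusion (\Cref{specHX}); notably, the algebra diagram plays no role in this particular lemma.

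First I would record that any continuous map is monotone for the specialization preorders, since $x \in \cl(\{x'\})$ forces $f(x) \in \cl(\{f(x')\})$ by continuity. Applied to $a$, and using that the specialization order on $HA$ is set inclusion, this yields that $C \subseteq D$ in $HA$ implies $a(C) \le a(D)$ in $A$. Next, to see that $a(C)$ is an upper bound of $C$, I take any $x \in C$; since $C$ is closed and contains $x$, we have $\cl(\{x\}) = \sigma(x) \subseteq C$, hence $\sigma(x) \le C$ in $HA$. Monotonicity of $a$ together with the unit triangle then gives $x = a(\sigma(x)) \le a(C)$, so every element of $C$ lies below $a(C)$.

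Finally, for the least-upper-bound property, suppose $y \in A$ is any upper bound of $C$, i.e. $x \le y$ for all $x \in C$. The key observation is that the downset of $y$ in the specialization order is exactly $\cl(\{y\}) = \sigma(y)$, so the condition $x \le y$ for all $x \in C$ translates precisely into $C \subseteq \sigma(y)$, that is $C \le \sigma(y)$ in $HA$. Applying monotonicity of $a$ and the unit triangle once more yields $a(C) \le a(\sigma(y)) = y$. Hence $a(C)$ is the least upper bound of $C$, so $a(C) = \bigvee C$.

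I do not expect a serious obstacle; the one point to keep straight is the dictionary between the specialization order on $HA$ and set inclusion, together with the resulting translation of ``$y$ is an upper bound of $C$'' into ``$C \subseteq \cl(\{y\})$'', which is exactly what allows the unit triangle to do all the work. The existence of the join is not assumed but obtained for free, since $a(C)$ is exhibited explicitly as a least upper bound.
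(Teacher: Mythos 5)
Your proof is correct and follows essentially the same route as the paper's: monotonicity of $a$ from continuity, the unit triangle $a \circ \sigma = \id$ to show $a(C)$ is an upper bound, and the inclusion $C \subseteq \sigma(u)$ for any upper bound $u$ to get leastness. Even your side remark that the algebra (multiplication) square is not needed matches the paper, which notes that the proof only uses the fact that $a$ is a retract of $\sigma$.
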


\noindent The proof only uses the fact that $a$ is a continuous retraction of $\sigma : A \to HA$.

\begin{proof}
	Let $C\in HA$. Since $a$ is continuous, it is monotone for the specialization preorder. For every $x\in C$ we have $\sigma(x) = \cl(\{x\}) \subseteq C$ and therefore, by the unit condition for algebras,
$$
x \;=\; a(\sigma(x)) \;\le\; a(C) .
$$
Hence $a(C)$ is an upper bound for $C$. On the other hand, let $u$ be any upper bound for $C$. Then $C\subseteq \sigma(u)$, which implies
$$
a(C) \;\le\; a(\sigma(u)) = u.
$$
Therefore $a(C)$ is a least upper bound for $C$, as was to be shown.
\end{proof}

\begin{lemma}[Lemma~1.5 in \cite{schalk}]\label{joinclosure}
 Let $X$ be a topological space. Then a subset $S \subseteq X$ admits a supremum in the specialization preorder if and only if $\cl(S)$ does, in which case they coincide.
\end{lemma}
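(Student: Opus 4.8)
The plan is to prove that $S$ and $\cl(S)$ have exactly the same upper bounds in the specialization preorder. Once this is established, the lemma is immediate: a supremum is by definition a least element of the set of upper bounds, so two sets with identical upper bounds have identical suprema, and in particular one exists precisely when the other does.

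The one ingredient I would isolate first is the point-version of the fact quoted just before the lemma. For any $u \in X$, the principal downset $\{x \in X : x \le u\}$ is literally equal to $\cl(\{u\})$, since by definition of the specialization preorder $x \le u$ means $x \in \cl(\{u\})$. In particular this downset is closed, which is just the special case of ``closed sets are downsets'' applied to the smallest closed set containing $u$.

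Using this, I would compare upper bounds directly. A point $u$ is an upper bound for $S$ if and only if $s \le u$ for every $s \in S$, that is, if and only if $S \subseteq \cl(\{u\})$. Because $\cl(\{u\})$ is closed, the inclusion $S \subseteq \cl(\{u\})$ holds if and only if $\cl(S) \subseteq \cl(\{u\})$, which is exactly the statement that $u$ is an upper bound for $\cl(S)$. Hence the sets of upper bounds of $S$ and of $\cl(S)$ coincide.

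Finally, I would conclude that an element is a least upper bound of $S$ if and only if it is a least upper bound of $\cl(S)$, so the two suprema exist together and agree. I do not expect any real obstacle here; the only point worth flagging is that the specialization preorder is merely a preorder, so suprema are a priori unique only up to the induced equivalence. This causes no trouble, however, precisely because the two sets of upper bounds are equal as sets rather than merely mutually cofinal, so the coincidence of suprema holds on the nose without any appeal to antisymmetry.
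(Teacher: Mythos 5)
Your proof is correct and is exactly the argument the paper has in mind: the paper dismisses the lemma as ``immediate from the fact that closed sets are downsets,'' and your comparison of upper-bound sets via $S \subseteq \cl(\{u\})$ iff $\cl(S) \subseteq \cl(\{u\})$ is precisely the unpacking of that remark. Your closing observation about the preorder subtlety (equality of upper-bound sets, not mere cofinality) is a nice touch that the paper leaves implicit.
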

\begin{proof}
 A point $x \in X$ is an upper bound for $S$ if and only if $S\subseteq \;\downarrow\!{x}=\cl(\{x\})$. Since the latter set is closed, $S\subseteq \;\downarrow\!{x}$ if and only if $\cl(S) \subseteq \;\downarrow\!{x}$. So the set of upper bounds of $S$ is equal to the set of upper bounds of $\cl(S)$. If this set admits a lowest element, this is the least upper bound of both $S$ and $\cl(S)$.
\end{proof}

\noindent The following lemma is somewhat converse to \Cref{algtolat}.

\begin{lemma}\label{lattoalg}
 Let $A$ be a $T_0$ space whose specialization preorder is a complete lattice. Suppose that the join map on closed sets $\bigjoin: HA \to A$ is continuous. Then $(A,\bigjoin)$ is an $H$-algebra.
\end{lemma}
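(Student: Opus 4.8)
The plan is to verify the two algebra axioms of \eqref{Halgdiag} directly, taking $a = \bigjoin$. The hypothesis that $\bigjoin$ is continuous enters only to guarantee that $\bigjoin$ is a morphism of $\cat{Top}$ and hence that $(\bigjoin)_\sharp = H(\bigjoin)$ is defined; the two diagrams themselves are then equalities of underlying functions, which I would check pointwise. Throughout, the workhorse is \Cref{joinclosure}, which lets me discard the closures inserted by $\sigma$, $\U$, and $(\bigjoin)_\sharp$ without changing the relevant suprema.

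First I would treat the unit diagram, i.e.\ that $\bigjoin(\sigma(x)) = x$ for all $x \in A$. Since $\sigma(x) = \cl(\{x\})$, \Cref{joinclosure} gives $\bigjoin(\sigma(x)) = \sup \cl(\{x\}) = \sup\{x\} = x$, so this triangle commutes immediately.

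Next I would check the associativity square, which for $a = \bigjoin$ asserts $\bigjoin \circ \U = \bigjoin \circ (\bigjoin)_\sharp$. Fixing $\mathcal{C} \in HHA$ and applying \Cref{joinclosure} on each side, the left composite is
$$
\bigjoin\big(\U(\mathcal{C})\big) \;=\; \sup \cl\left( \bigcup_{C \in \mathcal{C}} C \right) \;=\; \sup \bigcup_{C \in \mathcal{C}} C ,
$$
while the right composite is
$$
\bigjoin\big((\bigjoin)_\sharp(\mathcal{C})\big) \;=\; \sup \cl\big( \{\, \textstyle\bigjoin C : C \in \mathcal{C} \,\} \big) \;=\; \sup \big\{\, \textstyle\bigjoin C : C \in \mathcal{C} \,\big\} .
$$
These coincide by the elementary identity $\sup \bigcup_i C_i = \sup_i (\sup C_i)$, valid in any complete lattice: each $\sup C_i$ is bounded above by $\sup \bigcup_j C_j$ because $C_i \subseteq \bigcup_j C_j$, while conversely every element of $\bigcup_i C_i$ lies in some $C_i$ and is therefore bounded above by $\sup_i (\sup C_i)$.

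I do not expect a genuine obstacle here: once the closures are stripped away by \Cref{joinclosure}, everything reduces to the commutativity and associativity of suprema in the complete lattice $A$. The only point demanding care is the bookkeeping of which map inserts which closure, and confirming that \Cref{joinclosure} applies in each instance, which it does since $A$ is a complete lattice and all the suprema in question exist.
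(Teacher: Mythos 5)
Your proof is correct and takes essentially the same route as the paper's: both verify the two diagrams of \Cref{Halgdiag} directly with $a = \bigjoin$, using the continuity hypothesis only to make $\bigjoin$ a morphism of $\cat{Top}$, invoking \Cref{joinclosure} to discard the closures inserted by $\sigma$, $\U$, and $(\bigjoin)_\sharp$, and reducing the algebra square to the associativity of suprema in a complete lattice. The paper's proof is merely terser, leaving the supremum bookkeeping that you spell out implicit.
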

\begin{proof}
	We need the continuity of $\bigjoin$ to ensure that it is a morphism in $\cat{Top}$. We have to verify the commutativity of the diagrams \Cref{Halgdiag}. The unit diagram requires that, for each $x\in A$, we have $\bigjoin \cl(\{x\}) = x$, which holds by \Cref{joinclosure}.
	The algebra diagram requires that, for each $\mathcal{C}\in HHA$, we have $\bigjoin \, \U(\mathcal{C}) = \bigjoin \left(\bigjoin_\sharp \mathcal{C}\right)$.
	Using \Cref{joinclosure} and the lattice-theoretic fact that the join of an arbitrary union of sets is the join of the individual joins,
	we have that 
	\begin{align*}
		\bigjoin \, \U(\mathcal{C}) &=\; \bigjoin \left(\cl \left( \bigcup \mathcal{C} \right)\right) \;=\; \bigjoin \left( \bigcup \mathcal{C}\right) \;=\; \bigjoin \left\{ \bigjoin C \: : C \in \mathcal{C} \right\} \\
		&=\; \bigjoin \cl \left( \left\{ \bigjoin C \: : C \in \mathcal{C} \right\} \right) \;=\; \bigjoin \left( {\bigjoin}_\sharp \mathcal{C}\right) . \qedhere
	\end{align*}
\end{proof}

\begin{lemma}[Lemma~II.1.9 in~\cite{stonespaces}]\label{openscottopen}
	Let $X$ be a sober topological space. Then the specialization preorder of $X$ has directed joins, and every open set $U \subseteq X$ is Scott-open for the specialization preorder.
\end{lemma}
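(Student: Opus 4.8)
The plan is to exploit the defining property of a sober space---that every irreducible closed set is the closure of a unique point---to manufacture suprema of directed sets, and then to read off the Scott-openness of ordinary open sets from the interaction between suprema and closures. Throughout I would use the order conventions already fixed in \Cref{specHX}: writing $x \le y$ for $x \in \cl(\{y\})$, equivalently ``every open set containing $x$ contains $y$'', so that the open sets are precisely the up-sets and the closed sets precisely the down-sets for the specialization preorder (as noted just before \Cref{joinclosure}). Since a sober space is in particular $T_0$, this preorder is a genuine partial order, so suprema are unique once they exist.

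For the existence of directed joins, let $D \subseteq X$ be directed, hence nonempty. The heart of the argument is to show that $\cl(D)$ is \emph{irreducible}. Suppose open sets $U$ and $V$ both meet $\cl(D)$; then they both meet $D$, since an open set meets a subset exactly when it meets its closure. Choosing $d_1 \in D \cap U$ and $d_2 \in D \cap V$ and using directedness to find some $d \in D$ above both, the fact that $U$ and $V$ are up-sets forces $d \in U \cap V$, so $U \cap V$ meets $D$ and therefore $\cl(D)$. This is exactly irreducibility of the nonempty closed set $\cl(D)$. Sobriety then supplies a unique point $x$ with $\cl(D) = \cl(\{x\})$. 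Now $\cl(\{x\})$ consists exactly of the points $\le x$ and so has $x$ as greatest element, whence its supremum is $x$; applying \Cref{joinclosure} to $S = D$, the set $D$ has a supremum iff $\cl(D)$ does and they agree, so $\bigjoin D = x$. This establishes that the specialization order has directed joins.

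For the second claim, fix an open set $U$. It is an up-set, so by the definition of the Scott topology it remains only to verify the directed-convergence condition: whenever $D$ is directed with $\bigjoin D \in U$, the set $D$ meets $U$. But by the previous paragraph $x \coloneqq \bigjoin D$ satisfies $\cl(\{x\}) = \cl(D)$, so in particular $x \in \cl(D)$; as $U$ is then an open neighbourhood of a point of $\cl(D)$, it must meet $D$. Hence $U$ is Scott-open.

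The only step carrying genuine content is the irreducibility of $\cl(D)$ for directed $D$; everything else is translation between the specialization order and the topology. I expect the main obstacle to be purely a matter of care rather than difficulty: one must keep the direction of the specialization order consistent, since it is precisely the fact that open sets are \emph{up}-sets that makes directedness interact correctly with the intersection $U \cap V$, and one must read ``directed'' as including nonemptiness, so that $\cl(D)$ is nonempty and the appeal to sobriety is legitimate.
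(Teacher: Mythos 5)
Your proof is correct and complete: the irreducibility of $\cl(D)$ for a directed set $D$, sobriety supplying the unique generic point, \Cref{joinclosure} transferring the supremum from $\cl(D)$ to $D$, and the observation that $\bigjoin D \in \cl(D)$ forces any open set containing the join to meet $D$ are exactly the right ingredients, with the order conventions handled consistently. The paper states this lemma without proof, citing \cite{stonespaces}, and your argument is precisely the standard one given there, so there is nothing further to compare.
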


\begin{proof}[Sketch of proof]
If $S \subseteq X$ is directed in the specialization order, then $\cl(S)$ is irreducible and therefore the closure of a unique point, which can be seen to be the supremum. This construction of directed joins makes the second statement obvious, since every open set is upward closed.
\end{proof}

\begin{lemma}\label{bothjoins}
Let $L$ be a sober topological space. Then the specialization preorder of $L$ has binary joins if and only if it has all joins. Furthermore, in that case, the binary join map $\join:L\times L\to L$ is continuous if and only if the join map for closed sets $\bigjoin : HL \to L$ is continuous.
\end{lemma}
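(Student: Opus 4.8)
The plan is to treat the two equivalences separately, invoking soberness of $L$ throughout via \Cref{openscottopen}, which tells us that the specialization preorder of $L$ is directed complete and that every open subset of $L$ is Scott-open.

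For the order-theoretic equivalence, one direction is trivial. For the converse I would assume that the specialization preorder has binary joins and recover an arbitrary join $\bigjoin S$ as follows: the set of finite joins of elements of $S$ is directed, since it is closed under binary join, so it has a supremum by directed completeness, and a short check identifies this supremum with the least upper bound of $S$. The only case needing separate attention is the empty join, that is the least element, corresponding to the empty finite subset; this is exactly the point where ``binary joins'' must be understood as supplying also the bottom element, so that $\bigjoin$ is defined on all of $HL$, including $\varnothing$.

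For the continuity equivalence, the easy direction assumes $\bigjoin : HL \to L$ continuous and deduces continuity of $\vee$. I would factor the binary join as $\vee = \bigjoin \circ \gamma$, where $\gamma : L \times L \to HL$ sends $(x,y)$ to $\cl(\{x,y\})$; this satisfies $\bigjoin \gamma(x,y) = x \vee y$ by \Cref{joinclosure}. Continuity of $\gamma$ is immediate from the lower Vietoris topology, since $\gamma^{-1}(\Hit(U)) = (U\times L)\cup(L\times U)$, so $\vee$ is continuous as a composite.

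The substantive direction is the converse: assuming $\vee$ continuous, hence by iteration every finite join map $L^n \to L$ continuous, I must show $\bigjoin : HL \to L$ is continuous. Fix an open $W \subseteq L$, which is Scott-open by \Cref{openscottopen}. Writing $\bigjoin C$ as the directed supremum of the finite joins $\bigjoin F$ over finite $F \subseteq C$, Scott-openness gives that $\bigjoin C \in W$ if and only if $\bigjoin F \in W$ for some finite $F \subseteq C$. For each $n$, continuity of the $n$-ary join makes $W_n := \{(x_1,\dots,x_n) \in L^n : x_1 \vee \cdots \vee x_n \in W\}$ open in $L^n$, hence a union of basic boxes $U_1\times\cdots\times U_n$. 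Translating ``$C$ contains a tuple lying in such a box'' into the hitting relation then exhibits $\bigjoin^{-1}(W)$ as a union of finite intersections $\Hit(U_1)\cap\cdots\cap\Hit(U_n)$, so it is open in the lower Vietoris topology. I expect this last direction to be the main obstacle, and within it the careful treatment of the degenerate cases: the term $n=0$ should be read as contributing all of $HL$ precisely when the least element lies in $W$ (equivalently $W = L$), which is what forces $\bigjoin$ to be continuous also at the point $\varnothing \in HL$.
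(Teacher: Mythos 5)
Your proposal is correct and follows essentially the same route as the paper's proof: the same appeal to \Cref{openscottopen} plus directedness of finite joins for the order-theoretic part, the same factorization of $\join$ through $(x,y)\mapsto\cl(\{x,y\})$ with the computation $\gamma^{-1}(\Hit(U))=(U\times L)\cup(L\times U)$ for the easy direction, and for the hard direction the same combination of Scott-openness, a finite subset $F\subseteq C$ with $\bigjoin F\in W$, and continuity of the $n$-ary join --- merely phrased as a global description of $\bigjoin^{-1}(W)$ as a union of sets $\Hit(U_1)\cap\cdots\cap\Hit(U_n)$ rather than the paper's pointwise interiority argument. Your explicit treatment of the empty join (the bottom element) and of continuity at $\varnothing\in HL$ is a welcome bit of care that the paper leaves implicit.
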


\begin{proof}
As in \Cref{uppervietoris}, we also write $\Hit(U) := \pair{-}{U}^{-1}(1)$ for the subbasic open subset of $HL$ associated to an open $U \subseteq L$, consisting of all those closed sets that hit $U$.

If binary (and hence finitary) joins exist, then arbitrary joins exist by \Cref{openscottopen}. The converse is trivial. We thus only need to show that the join map $\bigjoin : HL \to L$ is continuous if and only if the binary join map $\join : L \times L \to L$ is.
	
Suppose that $\bigjoin : HL \to L$ is continuous. By \Cref{joinclosure}, it suffices to show that the map $\phi : L \times L \to HL$ defined by $\phi(x,y) = \cl(\{x,y\})$ is continuous. Consider any subbasic open set $\Hit(U)$ for open $U \subseteq L$. It is then enough to prove that the preimage $\phi^{-1}(\Hit(U))$ is open. In fact
\begin{align*}
\phi^{-1}(\Hit(U)) \;&=\; \{(x,y)\,|\,\cl(\{x,y\}) \intersection U \ne \varnothing \} \;=\; \{(x,y)\,|\,\{x,y\} \intersection U \ne \varnothing \} \\
  &=\; \{(x,y)\,|\,x\in U \} \union  \{(x,y)\,|\,y\in U \} \;=\; (U\times L) \union (L\times U) ,
\end{align*}
which is open in $L\times L$.

Suppose that the binary join map is continuous. We show that for every open set $U \subseteq L$, every $C \in \bigjoin^{-1}(U)$ has an open neighborhood contained in $\bigjoin^{-1}(U)$. Since $U$ is Scott-open by \Cref{openscottopen} and $\bigjoin C \in U$, there exists a finite set $\{x_1 , ..., x_n\} \subseteq C$ such that $x_1 \join \ldots \join x_n \in U$. Since the $n$-ary join map $L^n \to L$ is continuous by continuity of the binary one, there exist open neighborhoods $V_i \ni x_i$ such that for all $y_i \in V_i$ we have $y_1 \join \ldots \join y_n \in U$ as well. Consider the open set $W \coloneqq \bigcap_{i=1}^n \Hit (V_i)$. We have $C \in W$ by construction, and it is easy to see that $W \subseteq \bigjoin^{-1} (U)$ since $U$ is an upper set.
\end{proof}

\noindent We are now ready to prove the theorem. 

\begin{proof}[Proof of \Cref{halgebras}]
	By \Cref{algtolat} and \Cref{joinclosure}, we know that for an $H$-algebra $A$, every subset of $A$ must have a supremum, that is $A$ is a complete lattice in the specialization preorder. The Hoare hyperspace $HA$ is sober (\Cref{HX_sober}) and the map $\bigjoin: HA \to A$ must be continuous. By the unit condition of \Cref{Halgdiag}, it follows that $A$ is a retract of a sober space and therefore sober~\cite[Exercise~O.5.16]{continuous}. By \Cref{bothjoins}, the binary join is continuous too. Therefore $A$ is a topological complete join-semilattice and the algebra map is the join of closed sets.

Conversely, suppose that $A$ is a topological complete join-semilattice. By \Cref{bothjoins}, the join map of closed sets $\bigjoin: HA \to A$ is continuous. Using \Cref{lattoalg}, we conclude that $(A,\bigjoin)$ is an $H$-algebra.

To complete the proof, suppose that $A$ and $B$ are $H$-algebras. A morphism $m$ between them is, by definition, a continuous map such that the following diagram commutes.
\begin{equation*}
\begin{tikzcd}
HA \arrow{d}{\bigjoin} \arrow{r}{Hm} & HB \arrow{d}{\bigjoin} \\
A \arrow{r}{m} & B
\end{tikzcd}
\end{equation*}
Such maps $m$ are precisely those that preserve arbitrary suprema by \Cref{joinclosure}.
\end{proof}

We conclude this subsection with a remark. In contradiction to a claim by Schalk \cite[Sections~6.3 and 6.3.1]{schalk}, not every sober space whose specialization order is a complete lattice is a topological complete join-semilattice. In other words, for a sober space $X$ whose specialization order is a complete lattice, the continuity of the join map of closed sets $\bigjoin: HL \to L$, or equivalently of the binary join map $\join: L \times L\to L$, is not guaranteed.
A counterexample seems to be given by Hoffmann~\cite[Example~5.5 combined with Lemma~1.5]{hoffmann}, however it is based on what appears to be an incorrect reference (reference [5] therein). We give a concrete counterexample, based on Hoffmann's approach.

\begin{eg}[A sober space whose specialization preorder is a complete lattice, but whose binary join map is not continuous]
	Let $X$ be a $T_1$ space that is sober but not $T_2$. For example, $X$ could be the set $\N\union\{a,b\}$, where the open sets are given by all subsets of $\N$ and the sets of the form $\{a\}\union\N\setminus F$ and $\{b\}\union\N\setminus F$ and $\{a,b\}\union\N\setminus F$ where $F \subseteq \N$ is finite. Since $X$ is $T_1$, its specialization preorder is the discrete order.

Consider the set $X^* \coloneqq X \sqcup \{ \bot, \top \}$ with the topology whose nonempty open subsets $W \subseteq X^*$ are those in the form $W = V \cup \{ \top \}$ where $V$ is an open subset of $X$. The specialization preorder of $X^*$ is a complete lattice where
$$ x\join y = \begin{cases}
	\bot & \text{if } x = y = \bot, \\
	x & \text{if } x = y \in X, \\
	\top & \text{otherwise},
\end{cases}
$$
and similarly for arbitrary joins. To see that $X^*$ is sober, let $K \subseteq X^*$ be a nonempty irreducible closed set. If $\top \in K$, then necessarily $K = X^* = \cl(\{\top\})$. Hence we can assume $K = C \cup \{ \bot\}$ for some closed $C \subseteq X$. Suppose that $C = C_1 \cup C_2$ for closed $C_1,C_2 \subseteq X$. Then $K = \left( C_1 \cup \{\bot\} \right) \cup \left( C_2 \cup \{\bot\} \right)$. Since $K$ is irreducible, we must have $C_1 = C$ or $C_2 = C$. Therefore $C$ is also irreducible. Since $X$ is sober, $C$ is the closure of a unique point in $X$ and so must be $K$ in $X^*$. We conclude that $X^*$ is sober.

Consider the open subset $\{\top\} \subseteq X^*$. We will show that the preimage
$$
	{\join}^{-1} \left( \{ \top \} \right) = \left\{ (x,y) \in X^* \times X^* \:\big|\: x \join y = \top \right\}
$$
is not open, and therefore that $\join: X^* \times X^* \to X^*$ is not continuous, by exhibiting $(x,y) \in \join^{-1} \left( \{ \top \} \right)$ which is not in the interior of this preimage. Since $X$ is not $T_2$, there are distinct points $x, y \in X$ such that any open neighborhoods $U \ni x$ and $V \ni y$ in $X$ intersect. We then have $x \join y = \top$ since $x \neq y$. Consider any basic neighborhood $U' \times V'$ of $(x,y)$ in $X^* \times X^*$. These necessarily have the form $U' = U \cup \{\top\}$ and $V' = V \cup \{\top\}$ for certain open $U,V \subseteq X$. Since $U \cap V \neq \varnothing$, we have $z \in X$ such that $(z,z) \in U' \times V'$, but clearly $(z,z) \not \in \join^{-1}(\{\top\})$. Hence $(x,y)$ is not an interior point of $\join^{-1}(\{\top\})$.

	If $X = \N \cup \{a,b\}$ as above, the failure of continuity of the binary join can be illustrated as follows. The sequence $(1,2,3,\ldots)$ converges to both $a$ and $b$ in $X^*$, but $(1 \join 1, 2 \join 2, 3 \join 3, \ldots)$, which is the same sequence $(1,2,3,\ldots)$, does not converge to $a \join b = \top$.
\end{eg}

\subsection{Products and projections}
\label{H_bimonoidal}

In this section, we show that $H$ is a commutative monad (\Cref{comm_monads}), or, equivalently, a symmetric monoidal monad, with respect to the Cartesian monoidal structure on $\cat{Top}$. We start by constructing a \emph{strength} transformation $X \times HY \to H(X \times Y)$.

Given an open $W : X \times Y \to S$ and $x \in X$, we can consider $W(x,-) : Y \to S$, which is the restriction of $W$ along the continuous map $y \mapsto (x,y)$. In terms of open subsets, this amounts to starting with an open set $W$ in $X \times Y$ and pulling it back along the inclusion of $Y$ as a slice in $X \times Y$ to an open subset of $Y$.

\begin{deph}\label{defstrengthH}
	Let $X$ and $Y$ be topological spaces. We define $s : X \times HY \to H(X \times Y)$ on $x \in X$ and $C \in HY$ such that
	\begin{equation}
		\label{strength_def}
		\pair{s(x,C)}{W} \;\coloneqq\; \pair{C}{W(x,-)}
	\end{equation}
	for all $W \in S^{X \times Y}$.
\end{deph}

Since the restriction map $W \mapsto W_x$ preserves joins, the duality of \Cref{closedduality} applies, resulting in $s(x,C) \in H(X \times Y)$.
In terms of closed sets, we have
\[
	s(x,C) = \cl(\{x\} \times C ) = \cl(\{x\}) \times C,
\]
since $\{x\} \times C$ is disjoint from an open subset $W \subseteq X \times Y$ if and only if $C$ is disjoint from the slice of $W$ at $x$, and the second equation holds since products of closed sets are closed.

If $W = U \times V$ is a product of open sets, which means $W(x,y) = U(x) V(y)$ in terms of functions, then definition~\eqref{strength_def} simplifies to
\[
	\pair{s(x,C)}{U\times V} \;=\; U(x) \, \pair{C}{V} .
\]
The continuity of $s$ follows. \Cref{H_basis} and the fact that the products of open sets form a basis imply that it is enough to prove the continuity of the map
\[
	(x,C) \mapsto U(x) \, \pair{C}{V},
\]
which is indeed continuous since the multiplication $S \times S \to S$ is continuous and each factor of the product is.

\begin{prop}\label{H_strength_natural}
	The map $s$ is natural in both arguments: for all continuous functions $f:X\to Z$ and $g:Y\to W$, the following two diagrams commute.
$$
\begin{tikzcd}
 X\times HY \ar{d}{f\times \id} \ar{r}{s} & H(X\times Y) \ar{d}{(f\times \id)_\sharp} \\
 Z\times HY \ar{r}{s} & H(Z\times Y)
\end{tikzcd}
\qquad\qquad
\begin{tikzcd}
 X\times HY \ar{d}{\id\times g_\sharp} \ar{r}{s} & H(X\times Y) \ar{d}{(\id\times g)_\sharp} \\
 X\times HW \ar{r}{s} & H(X\times W)
\end{tikzcd}
$$
\end{prop}

\begin{proof}
Since the products of open sets $U \times V$ form a basis for the product topology, by \Cref{H_basis} it is enough in either case to show that both diagrams commute after pairing with a generic product of open sets $U \times V$. Doing so for the first diagram gives, for $x\in X$ and $C\in HY$,
	\begin{align*}
		\pair{(f\times \id)_\sharp s(x,C)}{U\times V} \;&=\; \pair{s(x,C)}{(U \times V) \circ (f \times \id)} \;=\; \pair{s(x,C)}{(U \circ f)\times V} \\
		&=\; U(f(x)) \, \pair{C}{V} \;=\; \pair{s(f(x),C)}{U\times V} .
	\end{align*}	
Similarly, for the second diagram,
	\begin{align*}
		\pair{(\id\times g)_\sharp s(x,C)}{U\times V} \;&=\; \pair{s(x,C)}{(U\times V) \circ (\id \times g)} \;=\; \pair{s(x,C)}{U \times (V \circ g)} \\
		&=\; U(x) \, \pair{C}{V \circ g} \;=\; U(x) \, \pair{g_\sharp C}{V} \;=\; \pair{s(x,g_\sharp C)}{U\times V}. \qedhere
	\end{align*}
\end{proof}

\begin{prop}
 \label{Hstrength}
 The map $s$ is a strength for the monad $H$.
\end{prop}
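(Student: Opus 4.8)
The plan is to verify the coherence axioms of a monad strength, since naturality of $s$ in both arguments has just been established. Concretely, I would check four diagrams: the left-unit law, the associativity law relating $s_{X\times Y,Z}$ to $s_{X,Y\times Z}$, compatibility with the unit $\sigma$, and compatibility with the multiplication $\U$. In every case the strategy is the same: by \Cref{prodbasisH} a closed subset of a product is determined by the product open sets it hits, so it suffices to evaluate the pairing $\pair{\,\cdot\,}{U\times V}$ of both sides against arbitrary product opens, and then simplify using the basic formula $\pair{s(x,C)}{U\times V}=\tv{x\in U}\meet\pair{C}{V}$ together with the pairing formulas for $\sigma$ and $\U$.

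The three coherence laws not involving $\U$ are direct. For the left-unit law, with terminal space $1=\{\ast\}$ and $U=\{\ast\}$, one obtains $\pair{s(\ast,C)}{\{\ast\}\times V}=\pair{C}{V}$, which is exactly the statement that $s(\ast,C)$ corresponds to $C$ under the identification $H(1\times Y)\cong HY$. For compatibility with $\sigma$, evaluating at $(x,y)\in X\times Y$ gives
\[
\pair{s(x,\sigma(y))}{U\times V}=\tv{x\in U}\meet\pair{\sigma(y)}{V}=\tv{x\in U}\meet\tv{y\in V}=\pair{\sigma(x,y)}{U\times V}.
\]
For the associativity law, evaluating both composites at $(x,y,C)$ against a triple product open $U\times V\times W$ yields $\tv{x\in U}\meet\tv{y\in V}\meet\pair{C}{W}$ on each side, so the two sides agree.

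The main obstacle is compatibility with the multiplication, i.e.\ that $\U\circ s_\sharp\circ s = s\circ(\id\times\U)\colon X\times HHY\to H(X\times Y)$, where $s_\sharp=Hs$. The key computation is the preimage identity $s^{-1}(\Hit(U\times V))=U\times\Hit(V)$, which follows from the slice description $s^{-1}(\Hit(W))=\{(x,C):C\in\Hit(W_x)\}$ in the proof of \Cref{H_strength_cont}, since $(U\times V)_x$ equals $V$ when $x\in U$ and $\varnothing$ otherwise. Given $x\in X$ and $\mathcal{C}\in HHY$, I would then chain the pairing formulas: using $\pair{\U(-)}{W}=\pair{-}{\Hit(W)}$, then \Cref{altdeffsharp} for $s_\sharp$, then the preimage identity, and finally the basic strength formula, to obtain
\[
\pair{\U(s_\sharp(s(x,\mathcal{C})))}{U\times V}=\pair{s(x,\mathcal{C})}{U\times\Hit(V)}=\tv{x\in U}\meet\pair{\mathcal{C}}{\Hit(V)}=\tv{x\in U}\meet\bigjoin_{C\in\mathcal{C}}\pair{C}{V}.
\]
On the other side, $\pair{s(x,\U\mathcal{C})}{U\times V}=\tv{x\in U}\meet\pair{\U\mathcal{C}}{V}=\tv{x\in U}\meet\bigjoin_{C\in\mathcal{C}}\pair{C}{V}$, so the two agree. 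By \Cref{prodbasisH} this establishes the diagram, completing the verification that $s$ is a strength for $H$. The only genuinely delicate point is the bookkeeping of which hyperspace each object lives in and the correct application of $s$ at two different type levels (first to $X\times HHY$, then via $s_\sharp=Hs$).
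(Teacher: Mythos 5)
Your proposal is correct and follows essentially the same route as the paper: both verify the four strength diagrams by testing closed sets against product open sets (via \Cref{prodbasisH}) using the pairing formulas for $s$, $\sigma$, and $\U$, with the multiplication diagram hinging on the same preimage identity $s^{-1}(\Hit(U\times V))=U\times\Hit(V)$. Your explicit justification of that identity via the slice description from \Cref{H_strength_cont} is a small improvement in rigor over the paper, which uses it silently.
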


In other words, for all topological spaces $X$ and $Y$, the following four diagrams commute. The first two involve the unitor $u$ and associator $a$ of the Cartesian monoidal structure of $\cat{Top}$, while the other ones involve the structure maps of the monad.
$$
\begin{tikzcd}
 1 \times HX \ar{r}{s} \ar{dr}{\cong}[swap]{u} & H(1\times X) \ar{d}{\cong}[swap]{u_\sharp} \\
 & HX
\end{tikzcd}
$$

$$
\begin{tikzcd}
 (X\times Y) \times HZ \ar{rr}{s} \ar{d}{\cong}[swap]{a} && H((X\times Y)\times Z) \ar{d}{\cong}[swap]{a_\sharp } \\
 X\times(Y\times HZ) \ar{r}{\id\times s} & X\times H(Y\times Z) \ar{r}{s} & H(X\times(Y\times Z))
\end{tikzcd}
$$

$$
\begin{tikzcd}
 X\times Y \ar{r}{\id\times\sigma} \ar{dr}[swap]{\sigma} & X\times HY \ar{d}{s} \\
 & H(X\times Y)
\end{tikzcd}
$$

$$
\begin{tikzcd}
 X\times HHY \ar{r}{s} \ar{d}{\id\times\U} & H(X\times HY) \ar{r}{s_\sharp } & HH(X\times Y) \ar{d}{\U} \\
 X\times HY \ar{rr}{s} && H(X\times Y)
\end{tikzcd}
$$

\begin{proof}
	For the first diagram, let $C\in HX$ and $U \in S^X$. Denoting by $\bullet$ the unique point of $1$, we have
	\[
		\pair{u_\sharp (s(\bullet,C))}{U} \;=\; \pair{s(\bullet,C)}{U \circ u} \;=\; \pair{C}{U} \;=\; \pair{u(\bullet,C)}{U}.
	\]
	For the second diagram, we apply \Cref{H_basis} to reduce to the evaluation on $U \in S^X$, $V \in S^Y$, and $W \in S^Z$. For each $x\in X$, $y\in Y$, and $C\in HZ$, we have 
	\begin{align*}
		\pair{a_\sharp s((x,y),C)}{U\times (V\times W)} \;&=\; \pair{s((x,y),C)}{(U\times (V\times W)) \circ a} \\
		&=\; \pair{s((x,y),C)}{(U\times V)\times W} \\
		&=\; (U \times V)(x,y) \, \pair{C}{W} \\
		&=\; U(x) \, V(y) \, \pair{C}{W} \\
		&=\; U(x) \, \pair{s(y,C)}{V\times W} \\
		&=\; \pair{s(x,s(y,C))}{U\times (V\times W)} .
	\end{align*}
	For the third diagram, we similarly evaluate
	\begin{align*}
		\pair{s(x,\sigma(y))}{U\times V} \;&=\; U(x) \, \pair{\sigma(y)}{V} \;=\; U(x) \, V(y) \\
		&=\; (U \times V)(x,y) \;=\; \pair{\sigma((x,y))}{U\times V} .
	\end{align*}
	And for the last diagram,
	\begin{align*}
		\pair{\U(s_\sharp s(x,\mathcal{C}))}{U\times V} \;&=\; \pair{s_\sharp s(x,\mathcal{C})}{\pair{-}{U\times V}} \;=\; \pair{s(x,\mathcal{C})}{\pair{s(-)}{U\times V}} \\
		&=\; \pair{s(x,\mathcal{C})}{U\times \pair{-}{V}} \;=\; U(x) \, \pair{\mathcal{C}}{\pair{-}{V}} \\
		&=\; U(x) \, \pair{\U\mathcal{C}}{V} \;=\; \pair{s(x,\U\mathcal{C})}{U\times V} . \qedhere
	\end{align*}
\end{proof} 

\begin{prop}
 \label{H_commutative}
 The strength is commutative, in the sense that the following diagram commutes,
 $$
 \begin{tikzcd}
  HX \times HY \ar{d}{t} \ar{r}{s} & H(HX\times Y) \ar{r}{t_\sharp } & HH(X\times Y) \ar{d}{\U} \\
  H(X\times HY) \ar{r}{s_\sharp } & HH(X\times Y) \ar{r}{\U} & H(X\times Y)
 \end{tikzcd}
 $$
 where the costrength $t:HX\times Y\to H(X\times Y)$ is obtained from the strength via the braiding. Explicitly,
$$
	\pair{t(C,y)}{W} \; = \; \pair{C}{W(-,y)}
$$
for all $C\in HX$, $y\in Y$ and $W \in S^{X \times Y}$.
\end{prop}

\begin{proof}
Let $C\in HX$, $D\in HY$, $U \in S^X$, and $V \in S^Y$. Then
	\begin{align*}
		\pair{\U(t_\sharp s(C,D))}{U\times V} \;&=\; \pair{t_\sharp s(C,D)}{\pair{-}{U\times V}} \;=\; \pair{s(C,D)}{\pair{t(-)}{U\times V}} \\
		&=\; \pair{s(C,D)}{\pair{-}{U} \times V} \;=\; \pair{C}{U} \, \pair{D}{V}.
	\end{align*}
	An analogous computation shows that $\pair{\U(s_\sharp t(C,D))}{U\times V} = \pair{C}{U} \, \pair{D}{V}$ as well. This is enough because products of open sets form a basis.
\end{proof}

\noindent Per \Cref{comm_monads}, we have the following.

\begin{cor}
	$(H,\sigma,\U)$ is a commutative (equivalently symmetric monoidal) monad with respect to the strength $s$.
\end{cor}

The lax monoidal structure is implemented by the multiplication map $HX\times HY\to H(X\times Y)$ given by the product of closed sets $(C,D) \mapsto C\times D$, as the computation in the proof of \Cref{H_commutative} shows. Due to the universal property of the product in $\cat{Top}$, $H$ is an oplax monoidal monad as well, even bilax (see \Cref{comm_monads}), which underlines the analogy between $H$ and probability monads~\cite{ours_bimonoidal}. The comultiplication $H(X\times Y)\to HX\times HY$ projects a closed set in the product space $X\times Y$ to the pair of its projections to $X$ and $Y$.

\section{The monad $V$ of continuous valuations}
\label{sec_valuations}

A valuation is similar to a Borel measure, but is defined only on the open sets of a topological space (see, for example,~\cite{AJK}). Valuations appeared as generalizations of Borel measures better suited to the demands of point-free topology and constructive mathematics. Jones and Plotkin \cite{jones-plotkin} defined a monad of subprobability valuations on the category of directed complete partially ordered sets (dcpo's) and Scott-continuous maps. The underlying endofunctor of this monad assigns to a dcpo the set of Scott-continuous subprobability valuations, its \emph{probabilistic powerdomain}. The monad multiplication corresponds to forming the expected valuation by integration.
Kirch~\cite{kirch} generalized the construction by working with valuations taking values in $\Rplusext$, and obtained a monad on the category of continuous domains and Scott-continuous maps~\cite[Satz~6.1]{kirch}. He also proved a Markov-Riesz-type duality for valuations~\cite[Satz~8.1]{kirch}, namely, that on a core-compact space $X$ (for example, a continuous domain) there is a duality of cones between lower semicontinuous functions $X \to \Rplusext$ and continuous valuations.

Heckmann~\cite{heckmann95}\footnote{We refer to the preprint version \cite{heckmann95} throughout, which is freely available online. Although the paper is published~\cite{heckmann}, we have not been able to obtain the published version. In addition, some results about the monoidal structure of $V$ seem to be present only in \cite{heckmann95}.} constructed for every topological space $X$ a space $VX$ of continuous valuations with values in $\Rplusext$, and proved that the construction forms a monad~\cite[Section~10]{heckmann95} which was later~\cite{stablycompext} named \emph{extended probabilistic powerdomain}. Heckmann~\cite[Theorem~9.1]{heckmann95} also extended the duality result of Kirch, showing that on every topological space $X$ there is a bijection between continuous valuations on $X$ and Isbell-continuous linear functionals from lower semicontinuous functions $X\to\Rplusext$ to $\Rplusext$.
Alvarez-Manilla, Jung, and Keimel~\cite[Theorem~25]{AJK} showed that one can view the space of continuous valuations $VX$ as the space of Scott-continuous, monotone, linear functionals from the space of lower semicontinuous functions $X\to\Rplusext$ back to $\Rplusext$. This duality formula, which is analogous to the one for the monad $H$ (\Cref{closedduality}), will form the basis of our treatment of $V$. Vickers~\cite{monad-locales} and Coquand and Spitters~\cite{coquand-spitters} have generalized the construction and the duality result to locales. 
For a more detailed history of the continuous valuations monad on $\cat{Top}$, see the papers of Alvarez-Manilla et al.~\cite{AJK} and Goubault-Larrecq and Jia \cite{algebras}.

Very recently, and independently of us, Goubault-Larrecq and Jia \cite{algebras} have studied the algebras of $V$. They show that every $V$-algebra is a $T_0$ space endowed with a certain structure, called a weakly locally convex sober topological cone.
They also prove that under additional assumptions (such as core-compactness of the space), this structure is also sufficient to induce a $V$-algebra structure.
A full characterization of the algebras of such monads, and a general answer to the question of which cones are algebras, is at present lacking (and presumably quite difficult).
We now review the basic constructions and results pertaining to the monad $V$.

\begin{deph}[Continuous valuation]
\label{defn_cpv}
Let $X$ be a topological space. A continuous valuation on $X$ is a map $\nu: \mathcal{O} (X) \to \Rplusext$ that satisfies the following four conditions.
\begin{enumerate}
\item \emph{Strictness}: $\nu(\varnothing)=0$.
\item \emph{Modularity}: For any $U , V \in \op(X)$, we have
	\begin{equation}
		\label{modularity}
		\nu (U \cup V) + \nu (U \cap V) = \nu(U) + \nu(V).
	\end{equation}
\item \emph{Scott continuity}: For any directed net $(U_\lambda)_{\lambda \in \Lambda}$ in $\op(X)$, we have
$$
\nu \left( \bigcup_{\lambda \in \Lambda}  U_\lambda \right) = \bigvee_{\lambda \in A} \nu \left( U_\lambda \right).
$$
\end{enumerate}
\end{deph}

Since Scott continuity implies monotonicity, $\nu(U) \le \nu(V)$ for $U \subseteq V$, we do not list it as a separate condition. It would have to be included in the definition of not necessarily continuous valuations.

Below we consider the \emph{space} of continuous valuations $VX$ and the extended probabilistic powerdomain monad $V : \cat{Top} \to \cat{Top}$. In \Cref{valmeas} we also recall the connection between continuous valuations and Borel measures.

\begin{remark}
It is clear from the definition that the set of continuous valuations only depends on the frame of open subsets of $X$, that is on the sobrification of $X$. In particular, it only depends on the universal $T_0$ quotient of $X$, its Kolmogorov quotient.
\end{remark}

\subsection{Duality theory for continuous valuations}\label{ssecddual2}

One can define an integration theory for continuous valuations that is analogous to Lebesgue integration for measures, but the role of measurable functions is played by lower semicontinuous functions~\cite{kirch,jung}. This results in a duality between continuous valuations and lower semicontinuous functions, just as closed subsets are dual to open subsets in \Cref{closedduality}. 

We start by recalling the definition of integral of a lower semicontinuous function against a continuous valuation, also known as \emph{lower integral}.
As far as we know, it was first defined in Kirch's thesis~\cite{kirch}, written in German. A reference in English is the slightly later work of Heckmann~\cite{heckmann95}.

Throughout, we equip the extended nonnegative reals $\Rplusext$ with the \emph{upper topology}, whose open sets are the sets of the form $(r,\infty]$, in addition to $\varnothing$ and the whole space. This topology is also known as the \emph{topology of lower semicontinuity}, since continuous functions $f : X \to \Rplusext$ are characterized by the openness of the sets $f^{-1}((r,\infty])$, a property more commonly known as lower semicontinuity. As we will see, $\Rplusext$ plays a role for continuous valuations that is perfectly analogous to the one of the Sierpiński space throughout \Cref{hyperspace}.

For every space $X$, we denote the set of (lower semi-)continuous functions $X \to \Rplusext$ by $\Rplusext^X$. It is an important fact that all joins in $\Rplusext^X$ exist and are pointwise. As for $S^X$ in \Cref{hyperspace}, we equip $\Rplusext^X$ with the pointwise order and pointwise algebraic structure given by addition and multiplication, using the convention that $\infty \cdot 0 = 0 \cdot \infty = 0$. There is also an action of every $r \in \Rplusext$ by ``scalar multiplication'', corresponding to multiplication by the constant function $X \to \{r\}$.

As in Lebesgue integration theory, a lower semicontinuous function $X \to \Rplusext$ is called \emph{simple} if it assumes only finitely many values. Every simple lower semicontinuous function $f:X\to\Rplusext$ can be written as a positive linear combination of indicator functions of open sets, that is in the form
 $$
 f \;=\;\sum_{i=1}^n r_i \1_{U_i}
 $$
 for $r_i\in(0,\infty]$ and $U_i\subseteq X$ open for all $i$, as can be seen by induction on the number of values that $f$ takes.
 It is well known that every lower semicontinuous function $X\to \Rplusext$ can be expressed as a directed supremum of simple functions (see for example \cite[Lemma~1.2]{kirch}). 
The integral is defined such that it is continuous with respect to directed suprema.
\begin{deph}
	Let $X$ be a topological space and $\nu : \op(X) \to \Rplusext$ a continuous valuation. For a simple function
$$
f\;=\;  \sum_{i=1}^n r_i \1_{U_i},
$$
the integral of $f$ with respect to $\nu$ is given by
$$
\int f \, d\nu \;:=\;  \sum_{i=1}^n r_i\, \nu(U_i).
$$
For any $g\in \Rplusext^X$, the integral of $g$ with respect to $\nu$ is defined as
$$
\int g \, d\nu \;:=\; \sup \left\{ \int f \, d\nu \;\bigg|\; \text{$f$ is simple and $f\le g$} \right\} .
$$
\end{deph}
It is not immediately obvious that the integral of a simple function is well-defined, since a priori it may depend on the particular representation, but it does not \cite[Proposition~1.2 and Lemma~4.1]{kirch}.

\begin{nota}
	To emphasize the analogy between the integral and the pairing~\eqref{H_pairing} in the Hoare hyperspace case, we also write
	\[
		\pair{\nu}{g} \coloneqq \int g \, d\nu.
	\]
\end{nota}

This pairing notation is also motivated by the following known duality result, which can be seen as a counterpart of \Cref{closedduality}.
Here and in the following, ``linear'' and ``linearity'' refers to linear combinations with coefficients in $[0,\infty]$.

\begin{thm}[Representation theorem for valuations]\label{daniellval}
	For any topological space $X$, integration establishes a bijection between:
	\begin{enumerate}
		\item continuous valuations on $X$;
		\item Maps $\phi : \Rplusext^X \to \Rplusext$ such that the following two conditions hold:
			\begin{enumerate}
				\item \label{V_linearity} \emph{Linearity:} For every $f,g \in \Rplusext^X$, we have
					\[
						\phi(f + g) = \phi(f) + \phi(g),
					\]
					and for every $r \in \Rplusext$, we have $\phi(rf) = r \phi(f)$.
				\item \label{V_scottcontinuity} \emph{Scott continuity:} For any directed net $(f_\lambda)_{\lambda \in \Lambda}$ in $S^X$, we have
					\[
						\phi \left( \sup_{\lambda \in \Lambda} f_\lambda \right) = \sup_{\lambda \in \Lambda} \phi(f_\lambda).
					\]
			\end{enumerate}
	\end{enumerate}
	Under this bijection, the pointwise order on continuous valuations corresponds exactly to the pointwise order on maps $\Rplusext^X \to \Rplusext$.
\end{thm}

Similarly as for \Cref{closedduality}, ``linearity'' is intended between semimodules over the semiring $\Rplusext$ (without negatives).

Note that the Scott continuity of integration can be thought of as an incarnation of the monotone convergence theorem for continuous valuations.

The proof of \Cref{daniellval} is fairly straightforward: restricting a given $\phi$ to indicator functions of open sets produces a continuous valuation, and one only needs to argue that this construction is inverse to the formation of the integral. While this is obvious in one direction, the other follows since~\ref{V_linearity} and~\ref{V_scottcontinuity} guarantee that $\phi$ is uniquely determined by its values on indicator functions of open sets. This line of argument is used, for example, by Kirch~\cite[Satz~8.1]{kirch} for the case of core-compact $X$, where it is worth noting that Kirch's proof actually works for any topological space $X$.\footnote{The reason why Kirch states core-compactness as an assumption is that in his work linearity and Scott continuity of the integral are proven only for the core-compact case \cite[Satz~4.1 and Satz~4.2]{kirch}, although they do hold in general~\cite{heckmann95}.} A very similar duality result appears in Heckmann's work~\cite[Theorem~9.1]{heckmann95}.

Intuitively, a continuous valuation is a quantitative analogue of a closed set: a closed set may or may not hit a given open set, while a valuation assigns a numerical value to every open set. Closed subsets of $X$ are equivalent to particular maps $S^X \to X$, and the lower Vietoris topology corresponds to the topology of pointwise convergence of such functionals. Similarly, continuous valuations on $X$ are particular maps $\Rplusext^X \to \Rplusext$ by \Cref{daniellval}. As for $H$, this double dualization picture will turn out to be useful for the construction of the monad and especially for the treatment of its multiplication.

\begin{deph}[Space of continuous valuations]\label{val_topology}
	Let $X$ be a topological space.
	\begin{enumerate}
		\item The space of continuous valuations over $X$, denoted $VX$, is the set of continuous valuations on $X$, or, equivalently, the set of maps $\Rplusext^X \to \Rplusext$ of \Cref{daniellval}.
		\item We equip $VX$ with the weakest topology that makes the pairing maps
			\[
				\pair{-}{f} \: : \: HX \longrightarrow \Rplusext
			\]
			continuous for every $f \in \Rplusext^X$.
	\end{enumerate}
\end{deph}

\noindent This weak topology on $VX$ has a well-known analogue for Borel measures (see \Cref{atop}).

\noindent We now develop an analogue of \Cref{H_basis}, and consider the specialization preorder on $VX$.

\begin{lemma}\label{V_basis}
	Let $\mathcal{F} \subseteq \Rplusext^X$ be a subset such that every function $f \in \Rplusext^X$ is a directed supremum of linear combinations of functions in $\mathcal{F}$. Then:
	\begin{enumerate}
		\item\label{V_basis_specialization} For given $\nu,\rho \in VX$, the following are equivalent:
			\begin{enumerate}
				\item\label{V_F_pointwise} For all $f \in \mathcal{F}$, we have $\pair{\nu}{f} \le \pair{\rho}{f}$.
				\item\label{V_specialization} $\nu \le \rho$ in the specialization preorder on $VX$.
			\end{enumerate}
		\item The topology on $VX$ is the weakest topology which makes all of the pairings
			\[
				\pair{-}{f} \: : \: VX \longrightarrow S
			\]
			continuous for $f \in \mathcal{F}$.
	\end{enumerate}
\end{lemma}

The specialization order is also the canonical order on the space of valuations in the probabilistic powerdomain~\cite{jones-plotkin}.

\begin{proof}
The proof is analogous to the proof of \Cref{H_basis}.
\begin{enumerate}
\item The pairing map is linear and Scott-continuous in the second argument, and therefore condition~\ref{V_F_pointwise} holds if and only if it holds with $\Rplusext^X$ in place of $\mathcal{F}$. Having replaced $\mathcal{F}$ by $\Rplusext^X$, it is also clear that~\ref{V_F_pointwise} is equivalent to~\ref{V_specialization}, since~\ref{V_F_pointwise} states exactly that every neighborhood of $\nu$ is a neighborhood of $\rho$, which is the definition of the specialization preorder.
\item We need to show that, if these maps are continuous, then so is every $\pair{-}{f}$ for $f \in \Rplusext^X$. This follows since linear combinations of continuous maps to $\Rplusext$ are continuous, as are directed suprema. \qedhere
\end{enumerate}
\end{proof}

An interesting choice for $\mathcal{F}$ is given by the collection of indicator functions of open sets, for then the pairing maps $\pair{-}{\1_U}$ coincide with the evaluation maps $\nu \mapsto \nu(U)$, and the fact that every function in $\Rplusext^X$ is a directed supremum of simple functions implies that these evaluation maps generate the topology as well. In fact, we have the following stronger result of Heckmann, which is one of the few statements whose proof is not analogous to an argument from~\Cref{hyperspace}.

\begin{prop}[Proposition~3.2 in \cite{heckmann95}]
	\label{portmanteauval}
	Let $\mathcal{B} \subseteq \op(X)$ be a basis that is closed under finite intersections.
	Then the topology of $VX$ is characterized as making the evaluation maps $VX \longrightarrow \Rplusext$ assigning $\nu \longmapsto \nu(U)$ continuous for all $U \in \mathcal{B}$.
\end{prop}

\begin{proof}
Consider the weakest topology on $VX$ that makes the $\nu \mapsto \nu(U)$ continuous for all $U \in \mathcal{B}$. It is then enough to show that for every finite sequence $U_1, \ldots, U_n \in \mathcal{B}$, also the evaluation map $\nu \mapsto \nu(U_1 \cup \ldots \cup U_n)$ is continuous, since the continuity of evaluation on any open set then follows by Scott continuity. By induction, we can extend the modularity equation~\eqref{modularity} to the inclusion-exclusion formula
	\[
		\nu(U_1 \cup \ldots \cup U_n) = \sum_{k=1}^n (-1)^{k+1} \sum_{i_1 \le \ldots \le i_k} \nu(U_{i_1} \cap \ldots \cap U_{i_k}).
	\]
	This implies the claim since every term on the right-hand side is a continuous function of $\nu$ by the assumption that $\mathcal{B}$ is closed under finite intersections.
\end{proof}

\begin{prop}[Proposition~5.1 in \cite{heckmann95}]\label{VX_sober}	
For every topological space $X$, the space $VX$ is sober.
\end{prop}

\begin{proof}
Completely parallel to the proof of \Cref{HX_sober}.
\end{proof}

We end this subsection with a small excursion to ordered topological spaces and the \emph{stochastic order}, which plays an important role in applied probability and economic theory~\cite{stochastic-orders}. This is of independent interest and will play no further role in this paper. It illustrates the utility of working with non-Hausdorff spaces: as we will see, the stochastic order for continuous valuations on a preordered space (say a Hausdorff space) coincides with the above specialization order on a suitable non-Hausdorff space.

A \emph{preordered topological space} is a topological space $X$ which is also a preordered set $(X,\le)$ such that the set of all ordered pairs $\{(x,y) \mid x \le y\}$ is closed in the product topology of $X \times X$.

\begin{deph}[Stochastic order]
Let $X$ be a preordered topological space. For any two $\nu,\rho\in VX$, we put $\nu\le\rho$ if and only if $\nu(U) \leq \rho(U)$ for any open upper set $U \subseteq X$. 
\end{deph}

The stochastic order coincides with the specialization preorder of continuous valuations if one replaces the topology by only those opens that are \emph{upward closed} with respect to the preorder. This may be a useful thing to do since the pointwise order is conceptually simpler than the stochastic order. Since the topology consisting of the upward closed opens is typically non-Hausdorff, we find that continuous valuations on non-Hausdorff spaces occur naturally in the context of the stochastic order.

\begin{eg}[Stochastic dominance on the real line]
	The stochastic order on $\R$, considered as a preordered topological space with the standard topology and order, is widely used in decision theory, economics, and finance to compare probability measures on the real line (e.g.~\cite{rothschild70}). For two probability measures on $\R$, one has $p \leq q$ if and only if $p((a,\infty)) \le q((a,\infty))$ for all $a \in \R$.
	If we now consider the valuations obtained by restricting the probability measures to just the open set, the induced order is exactly the pointwise order for continuous valuations on $\R$ equipped with the upper topology.
\end{eg}

\subsection{Functoriality}
\label{val_functoriality}

We now show that $V$ is a functor $\cat{Top} \to \cat{Top}$, based on the double dualization approach,
as we had done for the Hoare hyperspace monad $H$ in \Cref{H_fun}.

\begin{deph}[Pushforward]
\label{V_functoriality}
	Let $f:X\to Y$ be continuous and $\nu\in VX$. The \emph{pushforward} $f_*\nu \in VY$ is defined through the pairing with any $g \in \Rplusext^Y$ by 
 $$
 \pair{f_*\nu}{g} \; = \; \pair{\nu}{g\circ f} .
 $$
\end{deph}

It is easy to see that $\pair{f_*(\nu)}{-}$ satisfies the conditions of \Cref{daniellval}: for any $g,h\in \Rplusext^Y$ and $r \in \Rplusext$ we have, since linear combinations of functions are pointwise,
\begin{align*}
&\pair{\nu}{(g + h)\circ f} = \pair{\nu}{(g\circ f) + (h\circ f)} = \pair{\nu}{g\circ f} + \pair{\nu}{h\circ f} , \\
&\text{and } \pair{\nu}{(rg) \circ f}  = \pair{\nu}{r (g \circ f)} = r \pair{\nu}{g \circ f} .
\end{align*}
For Scott continuity, consider a directed net $(g_\lambda)_{\lambda\in \Lambda}$ in $\Rplusext^X$. Then
\begin{align*}
	\pair{\nu}{\Big(\sup_{\lambda\in \Lambda} g_\lambda\Big)\circ f} = \pair{\nu}{\sup_{\lambda\in \Lambda} \, (g_\lambda\circ f)} = \sup_{\lambda\in \Lambda} \, \pair{\nu}{ g_\lambda\circ f} ,
\end{align*}
where the second step uses that directed suprema in $\Rplusext^X$ are pointwise.

We have thus turned the change of variables formula into the definition of the pushforward valuation, which also makes it immediate that $f_* : VX \to VY$ is continuous.
Plugging in the indicator function of an open set $U \subseteq Y$ for $g$ shows that we could equivalently have defined
 $$
 (f_*\nu)(U)\; = \;\nu(f^{-1}(U)).
 $$

\begin{lemma}
 Let $f:X\to Y$ and $g:Y\to Z$ be continuous maps between topological spaces. Then $(g\circ f)_* = g_* \circ f_*$.
\end{lemma}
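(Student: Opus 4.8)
The plan is to verify the operator identity $(g\circ f)_* = g_*\circ f_*$ as an equality of maps $VX\to VZ$ by evaluating both sides on an arbitrary valuation and comparing, relying on the fact that a continuous valuation is completely determined by its values on open sets (\Cref{defn_cpv}). I would therefore fix $\nu\in VX$ and reduce the claim to checking that $(g\circ f)_*\nu$ and $(g_*\circ f_*)\nu$ assign the same mass to every open subset of $Z$.

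Concretely, I would fix an open set $U\subseteq Z$ and compute both sides directly from the definition of pushforward (\Cref{V_functoriality}). The left side gives
$$
(g\circ f)_*\nu(U) \;=\; \nu\big((g\circ f)^{-1}(U)\big) \;=\; \nu\big(f^{-1}(g^{-1}(U))\big),
$$
where the second equality is the elementary functoriality of preimages, $(g\circ f)^{-1}=f^{-1}\circ g^{-1}$. Unwinding the composite on the right yields
$$
(g_*\circ f_*)\nu(U) \;=\; f_*\nu\big(g^{-1}(U)\big) \;=\; \nu\big(f^{-1}(g^{-1}(U))\big).
$$
Since the two expressions coincide for every open $U\subseteq Z$, the valuations agree, and since $\nu$ was arbitrary the two maps are equal.

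Alternatively, one may run the identical argument through the pairing notation, verifying $\pair{(g\circ f)_*\nu}{h}=\pair{(g_*\circ f_*)\nu}{h}$ for every $h\in\lsc{Z}$ by repeatedly applying $\pair{f_*\nu}{h}=\pair{\nu}{h\circ f}$ and the associativity of composition, and then invoking \Cref{daniellval} to pass from equality of functionals back to equality of valuations. Either route is purely formal: there is no genuine obstacle, as the statement reduces to the functoriality of taking preimages (equivalently, of precomposition by continuous maps). The only point worth flagging explicitly is the appeal to the fact that a valuation is pinned down by its values on open sets, which is what licenses the pointwise comparison in the first route.
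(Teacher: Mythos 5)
Your proposal is correct, and your primary route differs slightly from the paper's own proof. The paper proves the identity in one line using the pairing notation: for $\nu\in VX$ and $h\in\lsc{Z}$ it computes $\pair{(g\circ f)_*\nu}{h} = \pair{\nu}{h\circ(g\circ f)} = \pair{\nu}{(h\circ g)\circ f} = \pair{f_*\nu}{h\circ g} = \pair{g_*f_*\nu}{h}$, i.e.\ exactly the ``alternative'' route you sketch at the end. Your main argument instead works directly on open sets via $(f_*\nu)(U)=\nu(f^{-1}(U))$ and the functoriality of preimages. This is, if anything, more elementary than the paper's proof: since a continuous valuation \emph{is} by definition a function on $\op(X)$ (\Cref{defn_cpv}), equality on open sets is literally equality of valuations, so the ``fact'' you flag as needing justification is immediate and no appeal to \Cref{daniellval} is required on this route. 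What the paper's pairing-based computation buys is uniformity: the whole development of $V$ (functoriality, unit, multiplication, monad axioms) is carried out in the functional/duality formalism in deliberate parallel with the hyperspace monad $H$, so phrasing even this trivial lemma through $\pair{f_*\nu}{h}=\pair{\nu}{h\circ f}$ keeps every verification in the same calculus. Both arguments are complete and essentially two phrasings of the same observation that precomposition (equivalently, taking preimages) is functorial.
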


The proof is exactly as in~\Cref{H_is_functor}. Since $(\id_X)* = \id_{VX}$ holds trivially, we have a functor $V:\cat{Top}\to\cat{Top}$ which assigns to a space $X$ its space of continuous valuations $VX$ and to each continuous map $f:X\to Y$ the pushforward map $f_*:VX\to VY$. In \Cref{probability}, it will turn out to be relevant that $V$ preserves subspace embeddings.

\begin{lemma}
	Let $i : X \hookrightarrow Y$ a homeomorphism onto its image. Then $i_* : VX \hookrightarrow VY$ is a homeomorphism onto its image as well. 
	\label{V_subspaces}
\end{lemma}

\begin{proof}
	By \Cref{portmanteauval}, it is enough to show that the induced map $\op(Y) \to \op(X)$ is surjective, which holds by assumption.
\end{proof}

\noindent The analogue of \Cref{H2monad} also holds, with essentially the same proof.

\begin{lemma}\label{V2monad}
	Let $X$ and $Y$ be topological spaces and let $f,g:X\to Y$ be continuous maps with $ f\le g$. Then $f_* \le g_*$. In other words, $V$ preserves 2-cells, making it into a 2-functor. 
\end{lemma}

\begin{proof}
	By \Cref{ordopen}, $f \le g$ implies that $h \circ f \le h \circ g$ in $\Rplusext^X$ for every $h \in \Rplusext^Y$. Therefore
  	\[
		\pair{f_*\nu}{h} = \pair{\nu}{h \circ f} \le \pair{\nu}{h \circ g} = \pair{g_*\nu}{h},
  	\]
	which, by \Cref{2cell_def} and \Cref{V_basis}, means $f_* \le g_*$.
\end{proof}

\subsection{Monad structure}\label{ssecmonadV}

As we did for $H$ in \Cref{hyperspace}, we equip $V$ with a monad structure using double dualization.

\subsubsection{Unit}
\label{V_unit}

\begin{deph}
\label{deltaval}
 Let $X$ be a topological space. The map $\delta:X\to VX$ is defined in terms of the pairing by
 \[
	\pair{\delta(x)}{f} \;\coloneqq\; f(x)
 \]
 for every $f \in \Rplusext^X$.
\end{deph}

It is obvious that linearity and Scott continuity in the second argument indeed hold, again since (directed) suprema in $\Rplusext^X$ are pointwise. $\delta$ is continuous by definition, since every $x \mapsto \pair{\delta(x)}{f}$ is continuous as a map $X \to \Rplusext$. We turn to showing that $\delta$ is a natural transformation $\delta : \id \Rightarrow V$ on $\cat{Top}$.

\begin{lemma}
 Let $f: X \to Y$ be continuous. Then the following diagram commutes.
 \[
  \begin{tikzcd}
   X \ar{r}{f} \ar{d}{\delta} & Y \ar{d}{\delta} \\
   VX \ar{r}{f_*} & VY
  \end{tikzcd}
 \]
\end{lemma}

\begin{proof}
 For $x\in X$ and $g \in \Rplusext^Y$,
$$
  \pair{f_*(\delta(x))}{g} \;=\; \pair{\delta(x)}{g\circ f} \;=\; g(f(x)) \;=\; \pair{\delta(f(x))}{g} .
  \qedhere
$$
\end{proof}

Using arguments analogous to the proof of \Cref{imagesigma2}, it can be shown that the image $\delta(X) \subseteq VX$, when equipped with the subspace topology, is homeomorphic to the Kolmogorov quotient of $X$ with respect to $\delta$ as the quotient map. In particular, $\delta$ is injective if and only if $X$ is $T_0$.

\subsubsection{Multiplication}

For every $g \in \Rplusext^X$, pairing with $g$ can be considered a function $\pair{-}{g} \in \Rplusext^{VX}$, which, as in \Cref{defmultH}, is central in the definition of the multiplication in terms of double dualization.

\begin{deph}
 Let $X$ be a topological space. The map $\E:VVX\to VX$ is defined on any $\xi\in VVX$ by
 $$ 
 \pair{\E\xi}{g} \;\coloneqq\; \pair{\xi}{\pair{-}{g}}
 $$
 for every $g \in \Rplusext^X$.
\end{deph}

Two applications of the linearity and Scott continuity of the pairing in the second argument show that this indeed results in an element $\E\xi$ of $VX$ by \Cref{daniellval}. It is also obvious that $\E : VVX \to VX$ is continuous. We now turn to naturality, which is proven in the same way as \Cref{unionnatural}.

\begin{prop}
 Let $f: X \to Y$ be continuous. Then the following diagram commutes.
 \begin{equation*}
  \begin{tikzcd}
   VVX \ar{r}{f_{**}} \ar{d}{\E} & VVY \ar{d}{\E} \\
   VX \ar{r}{f_*} & VY
  \end{tikzcd}
 \end{equation*}
\end{prop}
\begin{proof}
 Let $\xi\in VVX$ and $g \in \Rplusext^X$. Then
 \begin{align*}
 \pair{\E(f_{**}\xi)}{g} &= \pair{f_{**}\xi}{\pair{-}{g}} \;=\; \pair{\xi}{\pair{-}{g} \circ f_*} \;=\; \pair{\xi}{\pair{f_*(-)}{g}} \\
 &=\; \pair{\xi}{\pair{-}{g\circ f}} \;=\; \pair{\E\xi}{g\circ f} \;=\; \pair{f_*(\E\xi)}{g} . \qedhere
 \end{align*}
\end{proof}

\subsubsection{Monad axioms}

\begin{prop}
 Let $X$ be a topological space. Then the following three diagrams commute.
 \[
  \begin{tikzcd}
   VX \ar{r}{\delta} \idar{dr} & VVX \ar{d}{\E}  \\
   & VX
  \end{tikzcd}
  \qquad
  \begin{tikzcd}
   VX \ar{r}{\delta_*} \idar{dr} & VVX \ar{d}{\E} \\
   & VX
  \end{tikzcd}
  \qquad
  \begin{tikzcd}
   VVVX \ar{r}{\E_*} \ar{d}{\E } & VVX \ar{d}{\E} \\
   VVX \ar{r}{\E} & VX
  \end{tikzcd}
 \]
\end{prop}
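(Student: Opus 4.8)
The plan is to verify each of the three diagrams by evaluating the lower integral $\pair{\cdot}{g}$ of both composites against an arbitrary $g \in \lsc{X}$ and invoking the duality \Cref{daniellval}: since a continuous valuation is uniquely determined by its integrals against all lower semicontinuous functions, it suffices to show that the two sides of each diagram produce the same pairing with every such $g$. This mirrors exactly the strategy used for the hyperspace monad in \Cref{monaddiagramsH}, with the coupling $\pair{\cdot}{U}$ replaced by the pairing $\pair{\cdot}{g}$, and it reduces every diagram to an unwinding of the definitions of $\delta$, $\E$, and the pushforward.

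For the left unit diagram, given $\nu \in VX$ I would compute
$$
\pair{\E(\delta(\nu))}{g} \;=\; \pair{\delta(\nu)}{\pair{-}{g}} \;=\; \pair{\nu}{g},
$$
using the definition of $\E$ and then the fact that $\delta(\nu)$ is the Dirac valuation at $\nu$, so that pairing it with the functional $\pair{-}{g}$ just evaluates that functional at $\nu$. For the right unit diagram I would use that $\delta_* \nu$ is the pushforward along $\delta:X\to VX$, together with the defining identity $\pair{\delta_* \nu}{F} = \pair{\nu}{F \circ \delta}$; taking $F = \pair{-}{g}$ and noting $(\pair{-}{g} \circ \delta)(x) = \pair{\delta_x}{g} = g(x)$, so that $\pair{-}{g}\circ\delta = g$, again yields $\pair{\E(\delta_*\nu)}{g} = \pair{\nu}{g}$.

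For the associativity diagram, given $\xi \in VVVX$ I would show that both composites pair with $g$ to give the same iterated integral. Along the top-right path,
$$
\pair{\E(\E_* \xi)}{g} \;=\; \pair{\E_* \xi}{\pair{-}{g}} \;=\; \pair{\xi}{\pair{-}{g} \circ \E} \;=\; \pair{\xi}{\pair{-}{\pair{-}{g}}},
$$
where the middle equality uses the pushforward identity for $\E_* = V\E$ and the last uses $(\pair{-}{g} \circ \E)(\omega) = \pair{\E\omega}{g} = \pair{\omega}{\pair{-}{g}}$ for $\omega \in VVX$. Along the left-bottom path, two applications of the definition of $\E$ give $\pair{\E(\E\xi)}{g} = \pair{\E\xi}{\pair{-}{g}} = \pair{\xi}{\pair{-}{\pair{-}{g}}}$, the same value.

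The calculations themselves are routine substitutions; the one point requiring care, and the only genuine obstacle, is the well-definedness of every pairing that appears. Each time I integrate against a functional of the form $\pair{-}{h}$, I must know that it is lower semicontinuous on the relevant valuation space, so that the integral is defined and \Cref{daniellval} applies. For $\pair{-}{g}:VX\to[0,\infty]$ this is precisely the remark following the definition of $\E$, which records that $\pair{-}{g} \in \lsc{VX}$ by the definition of the topology on $VX$. For the associativity diagram I additionally need that the second-order functional $\pair{-}{\pair{-}{g}}:VVX\to[0,\infty]$ lies in $\lsc{VVX}$, which follows by applying the same remark one level up, since $\pair{-}{g} \in \lsc{VX}$ makes $\pair{-}{\pair{-}{g}}$ a subbasic generator of the topology on $VVX$. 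Once these semicontinuity facts are in place, the equalities of pairings hold for all $g \in \lsc{X}$, and the duality theorem delivers the commutativity of all three diagrams.
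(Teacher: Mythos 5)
Your proposal is correct and follows essentially the same route as the paper: pair each composite against an arbitrary $g \in \lsc{X}$, unwind the definitions of $\delta$, $\E$, and the pushforward, and conclude by the duality of \Cref{daniellval}, with the three computations matching the paper's line for line. Your extra remark that $\pair{-}{g} \in \lsc{VX}$ and $\pair{-}{\pair{-}{g}} \in \lsc{VVX}$ (so every pairing is well-defined) is a point the paper only records once, after the definition of $\E$, and makes the argument slightly more careful without changing it.
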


\begin{proof}
The proof is the same as for \Cref{monaddiagramsH}, after the relevant exchange of symbols. We start with the first diagram, left unitality. For every $\nu\in VX$ and $g \in \Rplusext^X$,
$$
\pair{\E\delta(\nu)}{g} \;=\; \pair{\delta(\nu)}{\pair{-}{g}} \;=\; \pair{-}{g}(\nu) \;=\; \pair{\nu}{g} .
$$
Right unitality works similarly,
$$
\pair{\E\delta_*(\nu)}{g} \;=\; \pair{\delta_*\nu}{\pair{-}{g}} \;=\; \pair{\nu}{\pair{-}{g} \circ \delta} \;=\; \pair{\nu}{\pair{\delta(-)}{g}} \;=\; \pair{\nu}{g(-)} \;=\; \pair{\nu}{g} .
$$
It remains to consider the associativity diagram. For $\xi\in VVVX$ and $g \in \Rplusext^X$ we get
\begin{align*}
\pair{\E(\E_*\xi)}{g} \;&=\; \pair{\E_*\xi}{\pair{-}{g}} \;=\; \pair{\xi}{\pair{-}{g} \circ \E} \;=\; \pair{\xi}{\pair{\E(-)}{g}} \\
&=\; \pair{\xi}{\pair{-}{\pair{-}{g}}} \;=\; \pair{\E\xi}{\pair{-}{g}} \;=\; \pair{\E(\E\xi)}{g} . \qedhere
\end{align*}
\end{proof}

\noindent We have proven the following statement.

\begin{thm}
 The triple $(V,\delta,\E)$ is a monad on $\cat{Top}$. 
\end{thm}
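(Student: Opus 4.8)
The plan is to recognize this as a bookkeeping statement that assembles the ingredients established above. By definition, a monad on $\cat{Top}$ consists of an endofunctor equipped with unit and multiplication natural transformations subject to two unitality axioms and an associativity axiom, and each of these ingredients has already been produced for the triple $(V,\delta,\E)$.

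First I would recall that \Cref{val_functoriality} exhibits $V$ as an endofunctor of $\cat{Top}$: it sends a space $X$ to the space $VX$ of its continuous valuations and a continuous map $f$ to the pushforward $f_*$, with functoriality checked through the pairing with lower semicontinuous functions. The unit $\delta:\id\Rightarrow V$ and the multiplication $\E:VV\Rightarrow V$ have likewise been shown to be continuous (\Cref{deltagr,Egr}) and natural in the two preceding naturality propositions, with the duality \Cref{daniellval} guaranteeing along the way that $\delta_x$ and $\E\xi$ are genuinely continuous valuations.

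The only remaining content is the coherence, and here the key observation is that the three commuting diagrams of \Cref{monaddiagramsV} are exactly the monad axioms: the two triangles are the left and right unitality laws $\E\circ\delta=\id$ and $\E\circ\delta_*=\id$, and the square is the associativity law $\E\circ\E_*=\E\circ\E$. Since all three were verified there by pairing against an arbitrary $g\in\lsc{X}$ and invoking \Cref{daniellval}, so that a valuation is pinned down by its action on lower semicontinuous functions, there is nothing left to do but collect these facts.

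I do not anticipate any genuine obstacle at this stage, precisely because the substantive work has been discharged by the preceding lemmas and the duality theory of \Cref{ssecddual2}, which converts every identity of valuations into an identity of pairings that can be manipulated by hand. The argument is therefore immediate and mirrors, step for step, the assembly of the hyperspace monad $H$ in \Cref{hyperspace}.
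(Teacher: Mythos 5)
Your proposal is correct and matches the paper's own treatment exactly: the theorem is stated with no further proof precisely because it assembles the previously established functoriality of $V$, the continuity and naturality of $\delta$ and $\E$, and the three diagrams of \Cref{monaddiagramsV}, which are the unitality and associativity axioms verified by pairing against $g\in\lsc{X}$ via \Cref{daniellval}. Nothing is missing.
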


We call $(V,\delta,\E)$, or just $V$, the \emph{monad of continuous valuations}. By \Cref{V2monad}, $V$ is a strict 2-monad for the 2-categorical structure of $\cat{Top}$ given in \Cref{2cat}.

\subsection{Algebras of $V$}\label{Valg}

Algebras of probability-like monads such as $V$ have some convex structure, in the sense that the algebra map can be interpreted as the formation of the barycenter of a measure or valuation. For example, the algebras of the Radon monad on the category of compact Hausdorff spaces are the compact convex subsets of locally convex topological vector spaces, with the algebra map given by barycenter formation~\cite{swirszcz,keimel}. The algebras of the Kantorovich monad on the category of complete metric spaces are the closed convex subsets of Banach spaces, again with respect to barycenter formation~\cite{ours_kantorovich}. If we drop the normalization of probability, as we do for $V$, then the algebra map $VA \to A$ can similarly be interpreted as assigning to every continuous valuation the integral of the identity function $A \to A$; in particular, such an algebra is a space where positive linear combinations of points can be evaluated to points. In contrast to the Radon and Kantorovich monads, a complete characterization of the category of algebras of $V$ seems to be quite difficult, but partial characterizations are possible. In work concurrent with ours, Goubault-Larrecq and Jia \cite{algebras} have proven that the algebras of $V$ on the subcategory of $T_0$ spaces are topological cones in the sense of Keimel~\cite{keimelcones} (as far as we know, the definition first appeared in~\cite[Section~7.1]{heckmann95}). Topological cones generalize convex cones in topological vector spaces without the requirement that addition is cancellative. We review Keimel's definition, and state some of the results of Goubault-Larrecq and Jia~\cite{algebras}. For further details we refer to the original papers~\cites{keimelcones, algebras}.

\begin{deph}[The category of topological cones]
 A topological cone is a $T_0$ topological space $K$ equipped with two operations.
 \begin{enumerate}
	\item An operation of addition $+:K\times K\to K$, jointly continuous, with a neutral element $0\in K$.
	\item An operation of scalar multiplication $\cdot: [0,\infty) \times K\to K$, jointly continuous (where $[0,\infty)$ carries the upper topology). 
 \end{enumerate}
  These operations satisfy the axioms of a $[0,\infty)$-semimodule with respect to the usual semiring structure of $[0,\infty)$. A morphism of topological cones is a continuous map which preserves the $[0,\infty)$-semimodule structure.
\end{deph}
 
\begin{prop}[Proposition~2 in \cite{stablycompext}]\label{goubault}
 Every $V$-algebra $e:VA\to A$ admits a canonical topological cone structure given by
 \begin{align*}
  a + b \;\coloneqq\; e(\delta_a + \delta_b) \hspace{20pt} \text{and} \hspace{20pt}
  r\,a \;\coloneqq\; e(r\,\delta_a) ,
 \end{align*} 
 where the sum and scalar multiplication on the right-hand sides are the pointwise sum and $r \in [0,\infty)$.
 Moreover, every morphism of $V$-algebras is a $[0,\infty)$-linear, continuous map.
\end{prop}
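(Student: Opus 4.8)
The plan is to realize $VA$ itself as a topological cone under the \emph{pointwise} operations $(\mu+\nu)(U)=\mu(U)+\nu(U)$ and $(r\nu)(U)=r\,\nu(U)$, and then to show that the algebra map $e$ is a homomorphism of cones onto $A$, so that all the structure is transported from $VA$ to $A$. The routine preliminary is that pointwise sums and scalar multiples of continuous valuations are again continuous valuations (strictness, monotonicity, modularity and Scott-continuity are each preserved termwise), making $(VA,+,\cdot,\mathbf 0)$ an $\Rplus$-semimodule with $\mathbf 0$ the zero valuation; moreover, via \Cref{daniellval}, these pointwise operations correspond to the pointwise operations on functionals, so the pairing $\pair{-}{g}$ is additive and homogeneous in the valuation argument. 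The whole proposition then rests on the linearity of $e$: for all $\mu,\nu\in VA$ and $r\in\Rplus$ one has $e(\mu+\nu)=e(\mu)+e(\nu)$ and $e(r\mu)=r\,e(\mu)$, the right-hand operations being those of the statement.

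To prove this linearity I would first record that the monad multiplication $\E$ recovers exactly the pointwise operations on Diracs: pairing against any $g\in\lsc A$ and using $\pair{\E\xi}{g}=\pair{\xi}{\pair{-}{g}}$ together with additivity of the pairing gives $\E(\delta_\mu+\delta_\nu)=\mu+\nu$ and $\E(r\delta_\mu)=r\mu$. Next I would use that the pushforward $e_*=V(e)\colon VVA\to VA$ is linear (preimages commute with the pointwise operations) and sends Diracs to Diracs, $e_*(\delta_\mu)=\delta_{e(\mu)}$. Combining these with the algebra associativity law $e\circ\E=e\circ e_*$ yields the chain
$$
e(\mu+\nu)\;=\;e\big(\E(\delta_\mu+\delta_\nu)\big)\;=\;e\big(e_*(\delta_\mu+\delta_\nu)\big)\;=\;e\big(\delta_{e(\mu)}+\delta_{e(\nu)}\big)\;=\;e(\mu)+e(\nu),
$$
where the last step is the definition of $+$ on $A$; the scalar identity is proved in the same way.

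Granting linearity of $e$, every semimodule axiom on $A$ follows by transporting the corresponding trivial pointwise identity from $VA$, invoking the algebra unit law $e\circ\delta=\id_A$. For instance associativity comes from applying $e$ to $\delta_a+\delta_b+\delta_c\in VA$ and bracketing it in two ways, obtaining $e((\delta_a+\delta_b)+\delta_c)=(a+b)+c$ and $e(\delta_a+(\delta_b+\delta_c))=a+(b+c)$, which agree because valuation addition is associative. Commutativity, the neutral element $0\coloneqq e(\mathbf 0)$ (equivalently $0\cdot a$), distributivity in both slots, $1\cdot a=a$, and $(rs)a=r(sa)$ all follow by the identical pattern. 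Since $A$ is $T_0$ by hypothesis, this produces exactly the data of a topological cone, pending continuity.

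It remains to check joint continuity of $+\colon A\times A\to A$ and $\cdot\colon\Rplus\times A\to A$. Here I would factor them through $VA$: the identity $a+b=e(\delta_a+\delta_b)$ exhibits $+$ as the composite $A\times A\xrightarrow{\delta\times\delta}VA\times VA\xrightarrow{+}VA\xrightarrow{e}A$, and $r\,a=e(r\,\delta_a)$ similarly factors through $\Rplus\times VA\to VA\xrightarrow{e}A$. As $\delta$ (\Cref{deltagr}) and $e$ are continuous, this reduces everything to continuity of the pointwise operations on $VA$, which in turn reduces, since each evaluation $\nu\mapsto\nu(U)$ is continuous into $[0,\infty]$ with its lower topology (this is how the subbasis $\theta(U,r)$ generates the topology on $VA$), to the joint lower-semicontinuity of addition and multiplication $[0,\infty]\times[0,\infty]\to[0,\infty]$. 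The main obstacle I anticipate is precisely this last analytic point for scalar multiplication: one must verify that $\{(s,t):st>r\}$ is open in the product of the lower topologies, handling the boundary cases $s=0$ and $t=\infty$ under the convention $0\cdot\infty=0$ forced by $0\cdot\nu=\mathbf 0$, and likewise that $\{(s,t):s+t>r\}$ is open. Both are upper sets that are open in the usual topology, and one checks by hand that each of their points admits a basic lower-neighborhood contained in the set.
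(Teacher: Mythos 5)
The paper offers no proof of this proposition: it is quoted from Goubault-Larrecq and Jia \cite{algebras}, and the only argument in the surrounding text is the remark that sobriety of $VX$ extends the statement from $T_0$ spaces to all of $\cat{Top}$. Your proposal therefore has to stand on its own, and it does: it is correct. The essential step is the linearity of $e$, and your derivation
\[
e(\mu+\nu)\;=\;e\big(\E(\delta_\mu+\delta_\nu)\big)\;=\;e\big(e_*(\delta_\mu+\delta_\nu)\big)\;=\;e\big(\delta_{e(\mu)}+\delta_{e(\nu)}\big)\;=\;e(\mu)+e(\nu)
\]
is sound: it uses additivity of the pairing in the valuation argument (which does follow from \Cref{daniellval}, since a pointwise sum of linear Scott-continuous functionals is again such a functional and agrees on indicator functions with the sum of the valuations), linearity of the pushforward $e_*=V(e)$, naturality of $\delta$, and the Eilenberg--Moore associativity law $e\circ\E=e\circ e_*$. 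Given linearity, transporting the semimodule identities along $e\circ\delta=\id_A$ works exactly as you describe, and the factorizations of $+$ through $A\times A\to VA\times VA\to VA\to A$ and of scalar multiplication through $\Rplus\times A\to\Rplus\times VA\to VA\to A$ correctly reduce joint continuity on $A$ to continuity of the pointwise operations on $VA$, hence, by testing against the subbasic sets $\theta(U,r)$, to joint lower semicontinuity of addition and multiplication on $[0,\infty]$ (with the convention $0\cdot\infty=0$); these are true by the elementary neighborhood check you indicate. In a written version you should spell out the two facts you assert in passing: that pointwise sums and nonnegative scalar multiples of continuous valuations are again Scott continuous (directed suprema commute with sums of monotone nets), and that the neutral-element law $a+e(\mathbf{0})=a$ follows from linearity applied to $\mu=\delta_a$, $\nu=\mathbf{0}$ rather than directly from the definition of $+$. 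Your route, defining the operations through $VA$ and transporting along the algebra map, is presumably close in spirit to the externally cited proof, but within this paper it is the only complete argument on record.
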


Note that this statement is analogous to the ``if'' part of \Cref{bothjoins}. Cohen et al.~\cite{stablycompext} proved this in the category of $T_0$ spaces. Since $VX$ is sober, and retracts of sober spaces are sober, every $V$-algebra is sober, which is the same reasoning as for $H$-algebras in~\Cref{Halg}. In particular every $V$-algebra is $T_0$, and therefore the proposition is true on the whole of $\cat{Top}$.
Moreover, Goubault-Larrecq and Jia~\cite[Proposition~4.9]{algebras} have shown that this cone is \emph{weakly locally convex}, meaning that for every point $x\in A$ and every open neighborhood $U\ni x$ there exists a convex neighborhood $C$ with $x\in C \subseteq U$ \cite[Definition~3.9]{algebras}.

Let $A$ be a topological cone. We say that $A$ is \emph{cancellative} if it is cancellative as a monoid, which means that for all $a,b,c\in A$,
$$
a + c \;=\; b + c \quad\Longrightarrow\quad a \;=\; b .
$$
Prominent examples of non-cancellative topological cones are topological complete join-semilattices (\Cref{top_lattice}).

\begin{eg}
Consider the lattice $W\coloneqq\{0,x,y,x\join y\}$ with the topological cone structure where the sum is given by the join, $0w= 0$, $\lambda w = w$ for all $w\in W$ and $\lambda>0$, and the open sets are the upper sets. This is a topological cone, and it is not cancellative. It is a topological complete join-semilattice in the sense of \Cref{top_lattice}, and therefore an $H$-algebra by \Cref{halgebras}. A systematic way in which lattices become $V$-algebras is given in \Cref{conseqalg}, where we show that every $H$-algebra is also canonically a $V$-algebra.
\end{eg}

\subsection{Products and marginals}
\label{V_bimonoidal}

In applications of measure theory, especially in probability theory, it is crucial to form products of measures and to project measures on product spaces to their marginal measures on the factor spaces. Even the fundamental probabilistic concept of stochastic independence can be understood in these terms: A product measure exhibits independence if it is equal to the product of its marginal measures. This subsection is concerned with the structure of products and marginals for $V$. We will show that $V$ is a commutative monad (see \Cref{comm_monads}), in the same way as we have done for $H$ (\Cref{H_bimonoidal}).

Instead of constructing products of valuations directly, it is easier to equip the monad with the equivalent structure of a \emph{commutative strength} (see \Cref{comm_monads}). This simpler approach is known to measure theorists--- if not under this name. For example, a map corresponding to the strength has been used by Ressel in his study of products of $\tau$-smooth Borel measures \cite{ressel77} (See our \Cref{ressel}). Conceptually, the use of the strength is as old as the concept of product measures; it is, for example, implicit in Halmos' treatment of product measures \cite[Paragraph 35]{halmos50}. The content of this section is mostly a unified treatment of results due to Heckmann \cite{heckmann95}.

To construct the strength transformation $s : X \times VY \to V(X \times Y)$, note that for every $g \in \Rplusext^{X \times Y}$ and $x \in X$, we have $g(x,-) \in \Rplusext^Y$. We have the following analogue of \Cref{defstrengthH}.

\begin{deph}[Proposition~11.1 in \cite{heckmann95}]
	Let $X$ and $Y$ be topological spaces. We define $s : X \times VY \to V(X \times Y)$ on $x\in X$ and $\nu\in VY$ such that
	\[
		\pair{s(x,\nu)}{f} \;\coloneqq\; \pair{\nu}{f(x,-)}
	\]
	for all $f \in \Rplusext^{X \times Y}$.
\end{deph}

The relevant properties for $s(x,\nu) \in V(X \times Y)$ are straightforward to check. If $f = gh$ is a product of $g \in \Rplusext^X$ and $h \in \Rplusext^Y$, then the definition simplifies to
\begin{equation}
	\label{strength_factor}
	\pair{s(x,\nu)}{gh} \;=\; \pair{\nu}{g(x) h(-)} \;=\; g(x) \, \pair{\nu}{h}.
\end{equation}
The continuity of $s$ follows. By \Cref{portmanteauval} it would even be enough to consider indicator functions of product of open sets, while even on a general $g$ and $h$ the right-hand side of~\eqref{strength_factor} is continuous in $x$ and $\nu$ since the multiplication $\Rplusext \times \Rplusext \to \Rplusext$ is. More generally, we will use the fact that a continuous valuation on a product space is uniquely determined by its pairings with product functions (for any number of factors in the product).

\begin{prop}
	\label{V_strength_natural}
	The map $s$ is natural in both arguments: for all continuous functions $f:X\to Z$ and $g:Y\to W$, the following two diagrams commute.
$$
\begin{tikzcd}
 X\times VY \ar{d}{f\times \id} \ar{r}{s} & V(X\times Y) \ar{d}{(f\times \id)_\sharp} \\
 Z\times VY \ar{r}{s} & V(Z\times Y)
\end{tikzcd}
\qquad\qquad
\begin{tikzcd}
 X\times VY \ar{d}{\id\times g_\sharp} \ar{r}{s} & V(X\times Y) \ar{d}{(\id\times g)_\sharp} \\
 X\times VW \ar{r}{s} & V(X\times W)
\end{tikzcd}
$$
\end{prop}

The proof is straightforward by using~\eqref{strength_factor} and perfectly analogous to the one of~\Cref{H_strength_natural}, hence we omit it.

\begin{prop}
 \label{Vstrength}
 The natural transformation $s$ is a strength for the monad $V$.
\end{prop}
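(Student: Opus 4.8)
The plan is to exploit the fact that the four coherence diagrams required for $s$ to be a strength are formally identical to those verified for the hyperspace monad in \Cref{Hstrength}, and to check each of them by evaluating both composites against lower semicontinuous functions. By \Cref{prodfunc}, a continuous valuation on a product space is uniquely determined by its values on functions of the product form $(x,y)\mapsto g(x)\,h(y)$, so throughout it suffices to test against such functions and to use the defining identity $\pair{s(x,\nu)}{g\cdot h} = g(x)\,\pair{\nu}{h}$ together with the naturality of $s$ in each argument established just above.

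The first three diagrams are then immediate. For the unitor diagram, writing $\bullet$ for the point of $1$ and taking $\nu\in VX$, the function $g\circ u$ on $1\times X$ equals $1\cdot g$, so $\pair{u_*s(\bullet,\nu)}{g} = \pair{s(\bullet,\nu)}{1\cdot g} = \pair{\nu}{g}$, which is $\pair{u(\bullet,\nu)}{g}$. For the unit diagram, testing $s(x,\delta_y)$ against $g\cdot h$ gives $g(x)\,\pair{\delta_y}{h} = g(x)\,h(y) = \pair{\delta_{(x,y)}}{g\cdot h}$. For the associator diagram I would test against a triple product $g\cdot h\cdot k$ and expand both composites using the product identity: along the top path $(g\cdot h\cdot k)\circ a$ regroups as $(g\cdot h)\cdot k$, yielding $g(x)\,h(y)\,\pair{\nu}{k}$, while along the bottom path $s(x,s(y,\nu))$ tested against $g\cdot(h\cdot k)$ collapses to the same value, exactly as in the corresponding computation for $H$.

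The substance of the proof is the multiplication diagram, asserting $\E\circ s_*\circ s = s\circ(\id\times\E)$ as maps $X\times VVY\to V(X\times Y)$. Fixing $(x,\xi)\in X\times VVY$ and a product function $g\cdot h$, the lower path yields $\pair{s(x,\E\xi)}{g\cdot h} = g(x)\,\pair{\E\xi}{h} = g(x)\,\pair{\xi}{\pair{-}{h}}$. For the upper path, the crucial observation is that the test function $\pair{s(-)}{g\cdot h}:X\times VY\to[0,\infty]$ is itself of product form, namely $(x',\nu)\mapsto g(x')\cdot\pair{\nu}{h}$, whose second factor $\nu\mapsto\pair{\nu}{h}$ lies in $\lsc(VY)$ by the very definition of the topology on $VY$. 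Unwinding the pushforward $s_*$ and the multiplication $\E$ therefore gives $\pair{\E s_* s(x,\xi)}{g\cdot h} = \pair{s(x,\xi)}{\pair{s(-)}{g\cdot h}} = g(x)\,\pair{\xi}{\pair{-}{h}}$, by applying the product identity now on $X\times VY$. The two paths agree, and \Cref{prodfunc} concludes the verification.

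I expect the only real obstacle to be the bookkeeping in the multiplication diagram: one must carefully distinguish the strength $s$ from its functorial image $s_*$, identify the iterated pairing $\pair{-}{\pair{-}{h}}$ correctly, and check that the intermediate functionals encountered are genuinely lower semicontinuous and of product type, so that \Cref{prodfunc} and the product identity may legitimately be invoked at each stage. No new ideas beyond those already deployed for $H$ are needed.
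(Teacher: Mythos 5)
Your proposal is correct and follows essentially the same route as the paper's proof: test both composites of each diagram against product functions $g\cdot h$, invoke the defining identity $\pair{s(x,\nu)}{g\cdot h}=g(x)\,\pair{\nu}{h}$ and \Cref{prodfunc} to conclude equality of valuations, and for the multiplication diagram unwind $\E$ and $s_*$ via the pairing, observing that $\pair{s(-)}{g\cdot h}$ is itself a product function $g(-)\cdot\pair{-}{h}$ on $X\times VY$ with $\pair{-}{h}\in\lsc{VY}$. The bookkeeping concerns you flag at the end are exactly the points the paper's computation handles, and in the same way.
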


In other words, for all topological spaces $X$ and $Y$, the following four diagrams commute, where $u$ is the unitor and $a$ the associator of the (Cartesian) monoidal structure of $\cat{Top}$.

$$
\begin{tikzcd}
 1 \times VX \ar{r}{s} \ar{dr}{\cong}[swap]{u} & V(1\times X) \ar{d}{\cong}[swap]{u_*} \\
 & VX
\end{tikzcd}
$$

$$
\begin{tikzcd}
 (X\times Y) \times VZ \ar{rr}{s} \ar{d}{\cong}[swap]{a} && V((X\times Y)\times Z) \ar{d}{\cong}[swap]{a_*} \\
 X\times(Y\times VZ) \ar{r}{\id\times s} & X\times V(Y\times Z) \ar{r}{s} & V(X\times(Y\times Z))
\end{tikzcd}
$$

$$
\begin{tikzcd}
 X\times Y \ar{r}{\id\times\delta} \ar{dr}[swap]{\delta} & X\times VY \ar{d}{s} \\
 & V(X\times Y)
\end{tikzcd}
$$

$$
\begin{tikzcd}
 X\times VVY \ar{r}{s} \ar{d}{\id\times\E} & V(X\times VY) \ar{r}{s_*} & VV(X\times Y) \ar{d}{\E} \\
 X\times VY \ar{rr}{s} && V(X\times Y)
\end{tikzcd}
$$

Since the statement and proof are perfectly analogous to the one of \Cref{Hstrength}, we omit the detailed verification here.

The following proposition defines the product valuations via the diagonal composite map of the diagram. Following Kock~\cite[Section~5]{kock_distributions}, this result can be thought of as a version of Fubini's theorem for continuous valuations (which is how Kock interprets the commutativity of a monad in general).

\begin{prop}
 \label{V_commutative}
 The strength of $V$ is commutative in the sense that the following diagram commutes,
 $$
 \begin{tikzcd}
  VX \times VY \ar{d}{t} \ar{r}{s} & V(VX\times Y) \ar{r}{t_*} & VV(X\times Y) \ar{d}{\E} \\
  V(X\times VY) \ar{r}{s_*} & VV(X\times Y) \ar{r}{\E} & V(X\times Y)
 \end{tikzcd}
 $$
 where the costrength $t:VX\times Y\to V(X\times Y)$ is obtained from the strength via the braiding. Explicitly,
$$
\pair{t(\nu,y)}{f} \;=\; \pair{\nu}{f(-,y)}
$$
for all $\nu\in VX$, $y\in Y$, $f\in\Rplusext^{X \times Y}$. 
\end{prop}

The proof of~\Cref{H_commutative} only needs to be copied and the corresponding symbols replaced.

\begin{cor}
	$(V,\delta,\E)$ is a commutative (equivalently symmetric monoidal) monad with respect to the strength $s$.
\end{cor}

In particular, we have a natural map $VX \times VY \to V(X \times Y)$ implementing the formation of products of continuous valuations~\cite[Theorem~11.2]{heckmann95}. The product of $\nu$ and $\rho$ pairs with functions of the form $g\cdot h$ as
$$
	\pair{\nu \otimes \rho}{g h} \;=\; \pair{\nu}{g} \, \pair{\rho}{h} .
$$
On open sets, the product valuation assigns
$$
	(\nu \otimes \rho)(U\times V) \;\longmapsto\; \nu(U)\cdot\rho(V) .
$$
Both formulations correspond to the ordinary product formulas widely used in measure and probability theory.

As for $H$, the universal property of the product makes $(V,\delta,\E)$ also into an oplax, and therefore even a bilax, monoidal monad. The resulting comultiplication $V(X\times Y)\to VX\times VY$ is the marginalization of valuations. The bilax monoidal structure of measure-like monads such as $V$ plays an important role in probability theory~\cite{ours_bimonoidal}.

\section{The probability monad on $\cat{Top}$}\label{probability}

The first probability functor was defined by Lawvere \cite{lawveremonad}: It assigns to a measurable space $X$ with $\sigma$-algebra $\Sigma_X$ the space of probability measures on $\Sigma_X$ endowed with the initial $\sigma$-algebra with respect to the family of evaluation maps $p \mapsto p (M)$ for $M \in \Sigma_X$.
This functor on the category of measurable spaces carries a canonical monad structure which makes it into a probability monad \cite{girymonad}.
However, due to the importance of topological concepts to measure theory and probability, such as weak convergence of measures (which requires a topology on the underlying space) or tightness, one can argue that there should be a probability monad on a suitable category of topological spaces as well, since only then can one hope to have enough expressiveness to develop synthetic versions of those theories in monadic terms.

The subcategory of topological spaces that is typically considered in analytical settings is the category of Polish spaces. Indeed Giry \cite{girymonad} introduced a probability monad on the category of Polish spaces: a Polish space $X$ is mapped to the space of Borel probability measures on $X$ equipped with the weak topology with respect to the integration maps $p \mapsto \int f \, dp$ for bounded continuous $f: X \to \R$, quite analogous to the topology on the space of continuous valuations that we had considered in the previous section.

Other choices of subcategories of $\cat{Top}$ are possible. One convenient subcategory for point-set topological purposes is the category of compact Hausdorff spaces. The Radon monad arising from the functor that assigns to a compact Hausdorff space the respective space of Borel probability measures equipped with the weak topology has been introduced by \'Swirszcz \cite{swirszcz}. A thorough treatment has been given by Fedorchuk \cite{fedorchuk}.

Neither of these monads restricts to the other, since the full subcategories of Polish spaces and of compact Hausdorff spaces overlap only partially in $\cat{Top}$.
But they are both contained in the category of $T_{3\frac{1}{2}}$ spaces. This case has been treated by Banakh \cite{banakh}, who studies the functor that assigns to a $T_{3\frac{1}{2}}$ space the space of inner regular and $\tau$-smooth Borel probability measures endowed with the weak topology, as well as the respective monad structure. 

In this section, we introduce the probability monad of $\tau$-smooth (but not necessarily inner regular) Borel probability measures on all of $\cat{Top}$. The underlying functor assigns to a topological space $X$ the space of $\tau$-smooth Borel probability measures on $X$ equipped with the A-topology, which coincides with the weak topology with respect to bounded continuous functions whenever $X$ is $T_{3\frac{1}{2}}$. Equipping this functor with a monad structure yields an extension of all the topological probability monads mentioned above. Since a probability measure on a Polish space is automatically $\tau$-smooth, our monad restricts to Giry's on Polish spaces. On compact Hausdorff spaces, the Radon probability measures are exactly the $\tau$-smooth ones, and we recover the Radon monad. On $T_{3\frac{1}{2}}$ spaces, Banakh's monad is a proper submonad of ours.

While our generalization is of independent interest, it is also motivated by applications to theoretical computer science: the study of probabilistic nondeterminism in denotational semantics requires the treatment of probability measures on topological spaces which may not even be $T_1$, let alone $T_{3\frac{1}{2}}$. Such spaces arise for example as partially ordered sets equipped with an order-compatible topology \cite{nonhausdorff}.

We finish this introduction with probability monads in metric settings. Breugel defined a probability monad on the category of compact metric spaces and 1-Lipschitz maps \cite{breugel}. The respective functor assigns to a compact metric space the space of its Borel probability measures endowed with the optimal transportation distance. Since the underlying space is compact, the optimal transportation distance induces the weak topology. This monad is a restriction of the probability monad of compact Hausdorff spaces of \'Swirszcz. However, the subcategory of topological spaces that is most convenient for geometrical purposes is the category of complete metric spaces and 1-Lipschitz maps. A probability monad on this category has been studied by Fritz and Perrone \cite{ours_kantorovich}, based on a categorical construction which does not involve measure theory. Its underlying functor assigns to a complete metric space the space of Radon probability measures with finite first moment, the largest subset of the Radon probability measures metrized by the optimal transportation distance.

\subsection{$\tau$-smooth Borel measures}\label{valmeas}

\begin{deph}[$\tau$-smooth Borel measure]
 Let $X$ be a topological space. A Borel measure $m$ on $X$ is called $\tau$-smooth if, for every directed net $(U_\lambda)_{\lambda\in\Lambda}$ of open subsets of $X$, 
 \begin{equation*}
  m \left( \bigcup_{\lambda\in\Lambda} U_\lambda \right) = \sup_\lambda m(U_\lambda) .
 \end{equation*}
\end{deph}

A more common regularity assumption on measures is the Radon property. It is known that, in the category of Hausdorff spaces, every Radon measure is $\tau$-smooth, and that, in the category of compact Hausdorff spaces, Radon measures and $\tau$-smooth measures coincide~\cite[Proposition~7.2.2]{bogachev}. Therefore all statements of this paper apply to Radon measures on Hausdorff spaces. It is clear that every $\tau$-smooth Borel measure restricts to a continuous valuation. 

\begin{deph}
	Let $\nu$ be a continuous valuation on a topological space $X$. We say that $\nu$ is \emph{extendable}, or that it \emph{extends to a measure}, if there exists a Borel measure $m$ such that, for every open set $U\subseteq X$, we have $m(U)=\nu(U)$.
\end{deph}

Since a Borel measure is uniquely determined by its values on open sets, such an extension, if it exists, is unique and $\tau$-smooth.

\begin{prop}\label{agreeonfunctions}
Let $X$ be a topological space. Suppose that a continuous valuation $\nu\in VX$ is extendable to a Borel measure $m$ on $X$. Then, for every lower semicontinuous function $f\in\Rplusext^X$, we have
$$
\pair{\nu}{f} \;=\;\int_X f \, dm.
$$
\end{prop}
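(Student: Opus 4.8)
The plan is to reduce to simple functions and then pass to the general case by monotone approximation from below. First I would treat a simple lower semicontinuous function $f = \sum_{i=1}^n r_i \1_{U_i}$ with $r_i \in (0,\infty]$ and $U_i \subseteq X$ open. By the definition of the lower integral, $\pair{\nu}{f} = \sum_{i=1}^n r_i\, \nu(U_i)$, whereas linearity of the Lebesgue integral gives $\int_X f\,dm = \sum_{i=1}^n r_i\, m(U_i)$. Since $\nu$ and $m$ agree on open sets by the extendability hypothesis, these two expressions coincide, so $\pair{\nu}{f} = \int_X f\,dm$ for every simple lower semicontinuous $f$.

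Next I would approximate an arbitrary $g \in \lsc{X}$ from below by simple lower semicontinuous functions. The standard dyadic construction
$$
f_n \;\coloneqq\; \sum_{k=1}^{n 2^n} \frac{1}{2^n}\, \1_{\{g > k/2^n\}}
$$
yields an increasing sequence of simple functions, each lower semicontinuous because the superlevel set $\{g > k/2^n\}$ is open, with $f_n \uparrow g$ pointwise. On the valuation side, $\pair{\nu}{-} : \lsc{X} \to [0,\infty]$ is Scott-continuous by \Cref{daniellval}, and since a pointwise-increasing sequence is a directed family whose supremum in $\lsc{X}$ is the pointwise supremum, we obtain $\pair{\nu}{g} = \sup_n \pair{\nu}{f_n}$. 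On the measure side, the monotone convergence theorem gives $\int_X g\,dm = \sup_n \int_X f_n\,dm$. Combining these with the simple-function identity $\pair{\nu}{f_n} = \int_X f_n\,dm$ established above and taking suprema over $n$ yields $\pair{\nu}{g} = \int_X g\,dm$.

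The bulk of the argument is routine once the two integrals are recognized as suprema of integrals of the \emph{same} approximating sequence. The one point that deserves care is precisely this alignment of the two definitions: the Lebesgue integral is a priori a supremum over all simple \emph{measurable} functions below $g$, whereas $\pair{\nu}{g}$ is a supremum over simple \emph{lower semicontinuous} functions below $g$. The monotone convergence theorem resolves the discrepancy, since the specific increasing sequence $f_n$ is simultaneously admissible in both definitions and converges pointwise to $g$; this is the step where the equality is genuinely bridged, and it relies only on $f_n$ being built from open superlevel sets on which $\nu$ and $m$ agree. An alternative, equally short route is the Cavalieri (layer-cake) identity $\int_X g\,dm = \int_0^\infty m(\{g>t\})\,dt = \int_0^\infty \nu(\{g>t\})\,dt$, but this presupposes the corresponding layer-cake formula for the lower integral, which we have not recorded; the monotone approximation argument is therefore preferable, as it uses only the definition of the lower integral and the Scott-continuity from \Cref{daniellval}.
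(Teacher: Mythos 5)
Your proof is correct, but it routes the general case through different machinery than the paper does. Both proofs share the same skeleton: first identify $\pair{\nu}{f}$ with $\int f\,dm$ for simple lower semicontinuous $f$ (using agreement on open sets), then pass to general $g$ by approximation from below. The paper, however, takes the directed family of \emph{all} simple lower semicontinuous minorants of $g$: on the valuation side the defining supremum of the lower integral then gives $\pair{\nu}{g}$ for free, while on the measure side it invokes \Cref{lsctausmooth}, the monotone convergence theorem for \emph{directed nets} of lower semicontinuous functions, which is only valid for $\tau$-smooth measures --- this is why the paper first observes that any Borel extension of a continuous valuation is automatically $\tau$-smooth. You instead fix the countable dyadic sequence $f_n \uparrow g$: the measure side then needs only the classical sequential monotone convergence theorem (valid for any Borel measure, so you never need the $\tau$-smoothness of $m$), but the valuation side now genuinely requires the Scott-continuity of the functional $\pair{\nu}{-}$, which you correctly pull from \Cref{daniellval}; without it, the definition of the lower integral only gives the inequality $\pair{\nu}{g} \geq \sup_n \pair{\nu}{f_n}$, since a priori an arbitrary simple minorant of $g$ need not be dominated by any $f_n$. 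So the trade-off is: the paper leans on the Fremlin-type directed-net result plus the $\tau$-smoothness observation but uses the lower integral only through its definition, whereas you lean on the (nontrivial, externally cited) duality theorem but keep the measure theory completely elementary. Your closing remark about the layer-cake alternative is also sound --- it would indeed presuppose an unproved layer-cake identity for the lower integral, and your choice to avoid it is the right one.
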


Note that all functions in $\Rplusext^X$ are measurable and nonnegative, therefore their Lebesgue integral against a probability measure is either a well-defined nonnegative number or $+\infty$. To prove the statement, we use the following standard result. 

\begin{prop}[Corollary~414B.a in \cite{fremlin}]
\label{lsctausmooth}
 Let $X$ be a topological space and $\mu$ a $\tau$-smooth Borel measure on $X$. Let $(f_\lambda)_{\lambda\in\Lambda}$ be a directed net in $\Rplusext^X$. Define $f(x) := \sup_\lambda f_\lambda(x)$ for all $x\in X$. Then $f$ is lower semicontinuous and
 $$
  \int f \, d\mu \quad = \quad \sup_{\lambda} \int f_\lambda\,d\mu .
 $$
\end{prop}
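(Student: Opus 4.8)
The plan is to give a direct measure-theoretic argument, deliberately independent of the valuation duality \Cref{daniellval} and of \Cref{agreeonfunctions}, since the latter is proved \emph{using} the present statement and invoking it here would be circular. Lower semicontinuity of $f$ is immediate and needs no directedness: for each $t\in[0,\infty)$ the superlevel set $\{x : f(x) > t\} = \bigcup_\lambda \{x : f_\lambda(x) > t\}$ is a union of open sets, hence open. The heart of the matter is the integral identity, and I would reduce it to the behaviour of $\mu$ on open sets via the layer-cake (Cavalieri) formula $\int g\,d\mu = \int_0^\infty \mu(\{g>t\})\,dt$, valid for every $[0,\infty]$-valued Borel function $g$, the outer integral being taken against Lebesgue measure on $(0,\infty)$. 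Applying this to $f$ and to each $f_\lambda$ turns the goal into an identity between the Lebesgue integrals of the nonincreasing functions $G(t) := \mu(\{f>t\})$ and $G_\lambda(t) := \mu(\{f_\lambda>t\})$.

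Next I would exploit $\tau$-smoothness levelwise. Fix $t\ge 0$. Since each $f_\lambda$ is lower semicontinuous, the sets $\{f_\lambda>t\}$ are open, and directedness of $\{f_\lambda\}$ under the pointwise order makes them directed under inclusion: for $f_{\lambda_1},f_{\lambda_2}$ any common upper bound $f_{\lambda_3}$ in the family satisfies $\{f_{\lambda_3}>t\}\supseteq\{f_{\lambda_1}>t\}\cup\{f_{\lambda_2}>t\}$. Their union is exactly $\{f>t\}$, so the defining property of $\tau$-smoothness gives $G(t)=\sup_\lambda G_\lambda(t)$ for every $t$. The inequality $\sup_\lambda \int f_\lambda\,d\mu \le \int f\,d\mu$ is then clear from monotonicity, and only the reverse inequality remains.

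The main obstacle is the interchange $\int_0^\infty \sup_\lambda G_\lambda\,dt = \sup_\lambda \int_0^\infty G_\lambda\,dt$: since the index set $\Lambda$ may be uncountable, the ordinary monotone convergence theorem (a statement about sequences) does not apply. I would circumvent this using that each $G_\lambda$, and hence $G=\sup_\lambda G_\lambda$, is \emph{nonincreasing} in $t$. For a nonincreasing nonnegative function the Lebesgue integral equals the supremum, over finite grids $0=t_0<t_1<\dots<t_n$, of the right-endpoint staircase sums $\sum_{i=1}^n (t_i-t_{i-1})\,G(t_i)$ (this approximation from below is elementary and remains valid both for an integrable singularity at $0$ and for $\int_0^\infty G=\infty$). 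Fixing such a grid, I have $G(t_i)=\sup_\lambda G_\lambda(t_i)$ at the finitely many points $t_1,\dots,t_n$; by directedness I can pick, for a given tolerance, a single member $h$ of the family with $h\ge f_{\lambda_i}$ for near-optimal indices $\lambda_i$ at each $t_i$ simultaneously, whence $G_h(t_i)=\mu(\{h>t_i\})\ge G_{\lambda_i}(t_i)$ for all $i$ at once. This yields $\sum_i (t_i-t_{i-1})\,G(t_i)=\sup_\lambda \sum_i (t_i-t_{i-1})\,G_\lambda(t_i)$, i.e. a finite sum commuting with the directed supremum, and each inner sum is bounded by $\int_0^\infty G_\lambda\,dt$. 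Taking the supremum over all grids gives $\int f\,d\mu = \int_0^\infty G\,dt \le \sup_\lambda \int_0^\infty G_\lambda\,dt = \sup_\lambda \int f_\lambda\,d\mu$, which closes the argument. The only genuinely delicate points are the commutation of a finite sum with a directed supremum (where it is directedness, rather than any monotone indexing of $\Lambda$, that is used) and the staircase approximation of the integral of a monotone function, both elementary once isolated.
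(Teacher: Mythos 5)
Your proof is correct, but note that the paper itself does not prove this proposition: it is imported verbatim from Fremlin (Corollary~414B.a), so there is no in-paper argument to compare against, and what you have produced is a valid self-contained substitute for that citation. Your route is essentially the standard one behind such results: lower semicontinuity of $f$ from openness of $\{f>t\}=\bigcup_\lambda\{f_\lambda>t\}$; the layer-cake formula, which indeed holds for arbitrary (not necessarily finite or $\sigma$-finite) Borel measures by simple-function approximation and sequential monotone convergence; levelwise $\tau$-smoothness, which is exactly where the hypothesis enters, since directedness of $(f_\lambda)$ in the pointwise order makes the open superlevel sets directed under inclusion; and then the genuinely delicate step, interchanging $\int_0^\infty$ with the possibly uncountable directed supremum of the nonincreasing functions $G_\lambda$, which you rightly observe is not covered by the sequential monotone convergence theorem. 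Your staircase device handles this correctly: right-endpoint sums lie below the integral of a nonincreasing function; refining dyadic grids yields an increasing sequence of staircase functions converging to $G$ off the at most countable set of discontinuities of $G$, so the supremum of the sums equals $\int_0^\infty G\,dt$, including when $G$ has an infinite singularity at $0$ or the integral diverges; and a finite sum commutes with a directed supremum by choosing one common upper bound $f_{\lambda^*}$ for finitely many near-optimal indices (with the obvious modification when some $G(t_i)=\infty$). Your caution about circularity is also well placed: \Cref{agreeonfunctions} is proved in the paper \emph{using} \Cref{lsctausmooth}, and the duality of \Cref{daniellval} only gives Scott continuity of the valuation-theoretic lower integral, so identifying that with the Lebesgue integral would again require \Cref{agreeonfunctions}. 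If this argument were to be written out in full, the only step deserving an explicit lemma is the staircase approximation of the integral of a monotone function, which, as you say, is elementary once isolated.
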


\begin{proof}[Proof of~\Cref{agreeonfunctions}]
If $f$ is simple, the two quantities agree by definition of integral of a simple function (both for measures and for valuations), using the fact that $\nu$ and $m$ agree on open sets. 
If $f$ is not simple, we use the defining supremum of $\pair{\nu}{f}$ and \Cref{lsctausmooth} to obtain the desired equality.
\end{proof}

If $X$ is not sober, a continuous valuation need not be extendable, as the following counterexample illustrates. It is based on the fact that the $\{0,\infty \}$-valued continuous valuations with $\nu(X) = \infty$ correspond to completely prime filters on $\op(X)$.

\begin{eg}
\label{inextendable}
Let $X \coloneqq (0,1)$ carry the upper open topology, hence its open subsets are of the form $(a,1)$ for $a\in[0,1]$. The Borel $\sigma$-algebra of $X$ coincides with the Borel $\sigma$-algebra of $(0,1)$ in the Euclidean topology. Consider the continuous valuation $\nu: \op (X)\to \Rplusext$ given by
 $$
 \nu(U) \;\coloneqq\; \begin{cases}
	0 & \text{if } U=\varnothing, \\
 	1 & \text{otherwise}.
               \end{cases}
 $$
 Suppose that there was a Borel measure $m$ on $X$ which agrees with $\nu$ on the open sets. Consider the measurable set $(a,b]$ for $0<a<b<1$. We have
 $$
 m\big( (a,b] \big) \;=\; m\big( (a,1) \backslash (b,1) \big) \;=\; m\big( (a,1) \big) -  m\big( (b,1) \big) \;=\; 1-1 \;=\; 0 .
 $$
The space $X$ can be expressed as a countable disjoint union,
 $$
 X \;=\; (0,1) \;=\; \bigsqcup_{n=1}^\infty \left( 1-\dfrac{1}{n}\;,\; 1-\dfrac{1}{n+1} \right] .
 $$
 Note that $m(X)=\nu(X)=1$, while
 $$
 m(X) = \sum_{n=1}^{\infty} m \left( \left( 1-\dfrac{1}{n}\;,\; 1-\dfrac{1}{n+1} \right] \right) \;=\;  \sum_{n=1}^{\infty} 0 \;=\; 0.
 $$
 Therefore $m$ is not countably additive.
\end{eg}

The interpretation of the above example is that $\nu$ represents a Dirac mass at $1$, a point in the sobrification of $X$. If we sobrify $X$ to $(0,1]$ by including the point $1$, then $\nu$ can be extended to $\delta_1$.

It is known that, on a $T_{3\frac{1}{2}}$ space, every finite continuous valuation extends to a measure \cite{ext-val}. The same holds on spaces that are sober and locally compact \cite{ext-val-sober}. In particular, the sets of finite $\tau$-smooth Borel measures and of finite continuous valuations are in bijection for all locally compact Hausdorff spaces and for all metric spaces. Whether extensions exist for all (finite) continuous valuations on sober spaces seems to be an open question.

\subsection{The A-topology}\label{atop}

We will construct our monad $P$ as a submonad of the monad $V$ from \Cref{sec_valuations}.

\begin{deph}\label{Atopology}
Let $X$ be a topological space. We define the space $PX$ to be the set of $\tau$-smooth Borel probability measures on $X$, equipped with the subspace topology inherited from $VX$, which is the weakest topology which makes the integration maps
\[
	p \longmapsto \int f \, dp	
\]
continuous for every lower semicontinuous $f : X \to \Rplusext$.
\end{deph}

By \Cref{portmanteauval} and \Cref{agreeonfunctions}, we can equivalently say $PX$ carries the weakest topology which makes the evaluation maps $p \mapsto p(U)$ lower semicontinuous for all $U \in \op(X)$.
This topology is called the \emph{A-topology} \cite[8.10(iv)]{bogachev}, named after Pavel Alexandrov.
For a thorough study of the A-topology in the case of a Hausdorff space, consult the monographs by Tops{\o}e\footnote{Note that Tops{\o}e calls the A-topology the weak topology.} \cite[Part II]{topsoe} and Bogachev \cite[Section~8.10 (iv)]{bogachev}.

\begin{prop}[p.~227 of \cite{bogachev}]
	\label{weaktop}
	Let $X$ be a $T_{3\frac{1}{2}}$ space. Then the topology of $PX$ is equivalently the weakest topology which makes the integration maps
	\[
		p \longmapsto \int f \, dp
	\]
	continuous for all bounded continuous $f : X \to \R$.
\end{prop}

\begin{proof}
One direction is obvious, since to every bounded continuous $f : X \to \R$ one can add a constant so as to make it into a lower semicontinuous function with values in $\Rplusext$.

For the other direction, we equip $PX$ with the weak topology with respect to bounded continuous functions, for which we prove that the evaluation map $p \mapsto p(U)$ is lower semicontinuous for any $U \in \op(X)$. But since \Cref{agreeonfunctions} shows that the integral is Scott-continuous on $\Rplusext$-valued lower semicontinuous functions, it is enough to exhibit the indicator function $\1_U$ as a directed supremum of bounded continuous functions.

To do so, choose for every $x \in U$ a continuous function $f_x : X \to [0,1]$ with $f_x(x) = 1$ and $f_x|_{X \setminus U} = 0$, which is possible by the $T_{3\frac{1}{2}}$ assumption. For a finite subset $F \subseteq U$, consider $f_F := \max(f_1,\ldots,f_n)$. Then $\1_U$ is the directed supremum of the $f_F$ by construction, establishing the claim.
\end{proof}

The weak topology of \Cref{weaktop} is the one with respect to which convergence of measures is usually considered (as in the Portmanteau theorem). 
This is not the case for general topological spaces, for which the A-topology can be considered to be more well-behaved than the weak topology (see for example the discussion in \cite{bogachev} after Theorem~8.2.3 therein, as well as its Section~8.10.iv). This is why we work with the A-topology. Due to \Cref{weaktop}, all results stated for the A-topology apply also to the equivalent weak topology in the $T_{3\frac{1}{2}}$ case. This includes all metric spaces and all compact Hausdorff spaces. 

\subsection{Functoriality}

For topological spaces $X$ and $Y$, every continuous map $f : X \to Y$ is Borel measurable. Therefore, for $p \in PX$, we have the pushforward measure $f_*p \in PY$ defined by
\[
	(f_*p)(B) := p(f^{-1}(B))
\]
for all Borel sets $B \subseteq Y$. It is easy to see that $f_*(p)$ is also $\tau$-smooth. Suppose further that $p \in PX$ restricts to $\nu \in VX$. Then we have, for all $U \in \op(Y)$,
$$
(f_*\nu) (U) = \nu \left( f^{-1}(U) \right) = p \left( f^{-1}(U) \right) = (f_*p) (U) ,
$$
where the first equation holds by \Cref{V_functoriality}. Hence $f_*p$ restricts to $f_*\nu$. We have proven the following proposition.

\begin{prop}
$P$ is a subfunctor of $V$.
\end{prop}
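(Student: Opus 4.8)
The plan is to realize $P$ as a subfunctor of $V$ by means of the componentwise restriction map $\iota_X \colon PX \to VX$, $p \mapsto p|_{\op(X)}$, and to check that it is a natural subspace embedding whose naturality squares are exactly the compatibility of the two pushforwards. First I would confirm that $\iota_X$ is well-defined, that is, that the restriction of a $\tau$-smooth Borel probability measure to $\op(X)$ is a continuous valuation: strictness, monotonicity, and modularity of \Cref{defn_cpv} are immediate properties of any Borel measure, and the Scott continuity of \Cref{defn_cpv}(d) is precisely the $\tau$-smoothness condition. This is the observation, already recorded above, that every $\tau$-smooth Borel measure restricts to a continuous valuation.

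Next I would show that $\iota_X$ is a subspace embedding. For injectivity one argues that a $\tau$-smooth Borel probability measure is determined by its values on open sets: the open sets form a $\pi$-system generating the Borel $\sigma$-algebra, so two probability measures agreeing on all open sets agree on the whole Borel $\sigma$-algebra by the standard uniqueness theorem for finite measures. That $\iota_X$ is in addition a homeomorphism onto its image is built into the definitions, since by \Cref{Atopology} the A-topology on $PX$ is literally the subspace topology inherited from $VX$ (generated by the sets $O(U,r) = \theta(U,r) \cap PX$).

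Finally, naturality, which simultaneously shows that the action of $V$ on morphisms restricts to $P$. For continuous $f \colon X \to Y$ and $p \in PX$ I would note that $f_* p$ is again a $\tau$-smooth Borel probability measure, and that the displayed computation above gives $(f_* \nu)(U) = (f_* p)(U)$ for every open $U \subseteq Y$, where $\nu = \iota_X(p)$. Since a continuous valuation is determined by its values on open sets, this identity says $\iota_Y \circ f_* = f_* \circ \iota_X$, so the valuation pushforward $f_* \colon VX \to VY$ carries $PX$ into $PY$ and agrees there with the measure pushforward. Assembling these three points exhibits $\iota$ as a natural embedding $P \Rightarrow V$, proving that $P$ is a subfunctor of $V$. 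The only genuinely non-formal ingredient is the injectivity of $\iota_X$, i.e.\ the uniqueness of a Borel probability measure from its open-set values; the remaining steps are immediate from the definitions or were carried out in the preceding computation.
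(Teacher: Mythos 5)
Your proof is correct and takes essentially the same route as the paper: the restriction of a $\tau$-smooth Borel probability measure to $\op(X)$ is a continuous valuation, the pushforward preserves $\tau$-smoothness, and the computation $(f_*\nu)(U) = \nu(f^{-1}(U)) = p(f^{-1}(U)) = (f_*p)(U)$ for open $U \subseteq Y$ gives naturality of the inclusion. Your additional verifications (injectivity of $\iota_X$ via the $\pi$-$\lambda$ uniqueness theorem for finite measures, and the embedding property being definitional from \Cref{Atopology}) merely make explicit what the paper builds into its definition of $PX$ as a subspace of $VX$.
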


We can further conclude from \Cref{V_subspaces} that, if $i : Y \hookrightarrow X$ is a subspace embedding, then so is $i_* : PY \hookrightarrow PX$.

\subsection{Monad structure}

Here we prove that $P$ can be extended to a submonad of $V$. Since $P$ is already a subfunctor of $V$, this monad structure is necessarily unique: we only need to show that both the unit and the multiplication of $V$ restrict to $P$.

Consider first the unit $\delta: X \to VX$. Since the Dirac valuation $\delta(x)$ defined by $\delta(x)(U) = \1_U(x)$ for open $U \subseteq X$ obviously extends to the Dirac measure $\delta_x (B) \coloneqq \1_B (x)$ for Borel sets $B \subseteq X$, the map $\delta$ factors through the inclusion $PX \hookrightarrow VX$. We also write $\delta : X \to PX$, slightly abusing notation. We know from \Cref{V_unit} that $\delta$ is a homeomorphism onto its image if $X$ is $T_0$. For $P$, the following more precise statement is known.

\begin{thm}[Theorem~11.1 of \cite{topsoe}]
	Let $X$ be a Hausdorff space. Then the map $\delta: X \to PX$ is the embedding of a closed subspace.
\end{thm}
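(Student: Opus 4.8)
The plan is to verify separately the two ingredients of a closed embedding: that $\delta$ is a topological embedding, and that its image is closed in the A-topology. The embedding part I would handle exactly as in the analogous statement \Cref{imagesigma1} for the unit of $H$. Since $X$ is Hausdorff it is in particular $T_0$, so $\delta$ is injective by the injectivity criterion established above. It then suffices to see that the topology of $X$ is initial with respect to $\delta$, which follows from computing the preimages of the subbasic open sets: for open $U \subseteq X$ and $r \in [0,1)$,
$$
\delta^{-1}(O(U,r)) \;=\; \{x \in X : \delta_x(U) > r\} \;=\; \{x : \1_U(x) > r\} \;=\; U .
$$
As these preimages already generate the topology of $X$, the injective continuous map $\delta$ is initial, hence a homeomorphism onto its image.

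The substantial part is to show that $\delta(X)$ is closed, i.e.\ that every $p \in PX$ which is \emph{not} a Dirac measure lies outside $\cl(\delta(X))$. The key geometric input, where both Hausdorffness and $\tau$-smoothness enter, is the claim that a non-Dirac $\tau$-smooth measure admits two disjoint open sets of positive measure. To establish this I would first observe that every $\tau$-smooth $p$ has a support: the set $X \setminus \supp(p) = \bigcup\{U \in \op(X) : p(U) = 0\}$ is a directed union of open null sets, and since finite unions of null sets are null, $\tau$-smoothness gives $p(X \setminus \supp(p)) = 0$. Thus $\supp(p)$ is the least closed set of full measure, and $x \in \supp(p)$ if and only if every open neighborhood of $x$ has positive measure. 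If $\supp(p)$ were a single point $\{x\}$, then $p(\{x\}) = 1$ and hence $p = \delta_x$; therefore a non-Dirac $p$ has at least two support points $x \neq y$. Using Hausdorffness I would pick disjoint open sets $U \ni x$ and $V \ni y$, and since $x,y \in \supp(p)$ this yields $p(U) > 0$ and $p(V) > 0$.

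Given such disjoint $U$ and $V$, I would choose $0 < a < p(U)$ and $0 < b < p(V)$ and consider the basic open neighborhood $O(U,a) \cap O(V,b)$ of $p$. Any Dirac measure $\delta_z$ lying in it would satisfy $\1_U(z) > a > 0$ and $\1_V(z) > b > 0$, forcing $z \in U \cap V = \varnothing$, a contradiction. Hence $O(U,a) \cap O(V,b)$ is disjoint from $\delta(X)$, so $p \notin \cl(\delta(X))$. As $p$ ranged over all non-Dirac measures, this shows the complement of $\delta(X)$ is open, and therefore $\delta(X)$ is closed; combined with the embedding above, $\delta$ is a closed embedding.

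The main obstacle is the support argument of the second paragraph: one must confirm that $\tau$-smoothness genuinely forces the union of all open null sets to be null, so that $\supp(p)$ carries full mass and a measure with one-point support is Dirac. Once this is in place, separating two support points by disjoint opens (Hausdorffness) and constructing the separating neighborhood are routine.
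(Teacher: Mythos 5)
Your proof is correct, and the first thing to note is that the paper itself does not prove this statement at all: it is imported from Topsøe (Theorem~11.1 of \cite{topsoe}) as a citation, so your argument stands as a self-contained alternative rather than a variant of an in-paper proof. Both halves are sound. The embedding half transposes the paper's argument for the unit of $H$ (\Cref{imagesigma1}) to $P$: Hausdorffness gives $T_0$, hence injectivity of $\delta$, and the computation $\delta^{-1}(O(U,r)) = U$ shows that the topology of $X$ is initial with respect to $\delta$; restricting to $r \in [0,1)$ loses nothing, since $O(U,r)$ has empty trace on $PX$ when $r \geq 1$. The closedness half is precisely the measure-theoretic analogue of the paper's (unlabelled) corollary that $\sigma(X)$ is closed in $HX$ for Hausdorff $X$: there, a closed set with two distinct points hits two disjoint open sets; here, a non-Dirac $\tau$-smooth measure charges two disjoint open sets $U$ and $V$, and the basic neighborhood $O(U,a) \cap O(V,b)$ of $p$ then misses every Dirac measure. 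The supporting facts you re-derive are already available in the paper and could simply have been cited: that the support of a $\tau$-smooth measure carries full mass is \Cref{tausmoothsupp}, and the characterization of support points by positive mass of all their neighborhoods is \Cref{pointinsupport}. The remaining step, that a probability measure with $p(\{x\}) = 1$ equals $\delta_x$ on all Borel sets, is correctly handled, and your closing caveat accurately locates where $\tau$-smoothness is indispensable: without it the support need not carry full mass, and a one-point support does not force a Dirac measure, as the Dieudonné measure (\Cref{dieudonne}) on the Hausdorff space $[0,\omega_1]$ shows.
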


The remaining issue in showing that $P$ is a submonad of $V$ is to prove that the multiplication $\E : VVX \to VX$ restricts to a map $\e : PPX \to PX$. To simplify the exposition, we construct $\e : PPX \to PX$ first and then show that $\e$ is indeed the restriction of $\E$.

\begin{prop}\label{evalmeasurable}
 Let $X$ be a topological space and let $A\subseteq X$ be a Borel set. Then the evaluation map $\eval_A:PX\to\R$ given by $p\mapsto p(A)$ is Borel measurable.
\end{prop}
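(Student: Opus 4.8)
The plan is to apply Dynkin's $\pi$-$\lambda$ theorem (the monotone class theorem for sets). Let $\mathcal{A}$ denote the collection of all Borel sets $A \subseteq X$ for which the evaluation map $\eval_A : PX \to \R$ is Borel measurable. The goal is to show that $\mathcal{A}$ contains every Borel set, and the strategy is to verify that $\mathcal{A}$ is a $\lambda$-system containing a generating $\pi$-system.

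First I would record the base case: for an \emph{open} set $U \subseteq X$, the map $\eval_U$ is Borel measurable. Indeed, by the very definition of the A-topology (\Cref{Atopology}), the preimage $\eval_U^{-1}\big((r,\infty)\big) = O(U,r)$ is open in $PX$ for every $r \in [0,\infty)$, so $\eval_U$ is lower semicontinuous and hence Borel measurable. Thus $\mathcal{A}$ contains the open sets, which form a $\pi$-system, since finite intersections of open sets are open, and which generate the Borel $\sigma$-algebra of $X$.

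Next I would verify that $\mathcal{A}$ is a $\lambda$-system. It contains $X$, which is open. If $A, B \in \mathcal{A}$ with $A \subseteq B$, then for every $p \in PX$ we have $p(B \setminus A) = p(B) - p(A)$, where the subtraction is legitimate because $p$ is a probability measure, so all masses are finite; being a difference of two Borel measurable functions, $\eval_{B \setminus A}$ is Borel measurable, whence $B \setminus A \in \mathcal{A}$. Finally, if $(A_n)_{n \in \N}$ is an increasing sequence in $\mathcal{A}$ with union $A$, then continuity from below of each measure gives $p(A) = \sup_n p(A_n)$, exhibiting $\eval_A$ as a pointwise supremum of Borel measurable functions, hence Borel measurable; thus $A \in \mathcal{A}$.

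Having established that $\mathcal{A}$ is a $\lambda$-system containing the $\pi$-system of open sets, Dynkin's theorem yields that $\mathcal{A}$ contains the $\sigma$-algebra generated by the open sets, namely the entire Borel $\sigma$-algebra. Consequently $\eval_A$ is Borel measurable for every Borel set $A$, as claimed. No step is especially delicate; the only point that genuinely uses the restriction to probability measures, rather than arbitrary valuations or measures, is the subtraction step in the $\lambda$-system verification, where finiteness of the masses $p(A)$ is essential.
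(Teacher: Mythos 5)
Your proof is correct and takes essentially the same approach as the paper: both establish lower semicontinuity (hence measurability) of $\eval_U$ for open $U$ directly from the definition of the A-topology, and then apply the $\pi$-$\lambda$ theorem to the collection of Borel sets $A$ for which $\eval_A$ is measurable. The only cosmetic difference is the choice of equivalent $\lambda$-system axioms, you verify closure under increasing unions via continuity from below, while the paper verifies closure under countable disjoint unions via countable additivity and partial sums; both rest on measurability of pointwise suprema, together with the same subtraction step for proper differences.
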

\begin{proof}
	If $A\subseteq X$ is open, then $\eval_A$ is lower semicontinuous by definition of the A-topology on $PX$, and therefore also Borel measurable.
	
	In general, we denote by $\Sigma$ the set of Borel measurable $A\subseteq X$ for which $\eval_A:PX\to\R$ is measurable.
	To see that $\Sigma$ is closed under countable unions, suppose that a measurable set $B\subseteq X$ can be written as a countable union of disjoint measurable subsets,
 $$
 B = \bigcup_{n=1}^\infty A_n,
 $$
 such that $A_n\in \Sigma$ for every $n$. Then, for each $p\in PX$,
 $$
 \eval_B(p) \; = \; p(B) \; = \; \sum_{n=1}^\infty p(A_n) \; = \; \sum_{n=1}^\infty \eval_{A_n}(p) .
 $$
 Since pointwise suprema of measurable functions are measurable, and every partial sum on the right is measurable in $p$, we conclude that $\eval_A$ is also measurable in $p$. Therefore $\Sigma$ is closed under countable disjoint unions. Consider $A,B\in\Sigma$ with $A\subseteq B$. Clearly $\eval_{B\backslash A} = \eval_B - \eval_A$ is also measurable. Therefore $\Sigma$ is closed under relative complements. This makes $\Sigma$ into a Dynkin system. Since it contains the $\pi$-system of open subsets, the $\pi$-$\lambda$ theorem implies that $\Sigma$ is the $\sigma$-algebra of Borel sets.
\end{proof}

\begin{deph}
Let $X$ be a topological space and $\mu\in PPX$. Let $A\subseteq X$ be Borel measurable. We define
 $$
 (\e \mu) (A) \coloneqq \int_{PX} p(A) \, d\mu(p) .
 $$
\end{deph}

The integrand $p\mapsto p(A) \in [0,1]$ is measurable by \Cref{evalmeasurable} and bounded, therefore the integral exists.

\begin{prop}
	Let $X$ be a topological space and consider $\mu\in PPX$. Then the assignment $A\mapsto (\e \mu)(A)$ is a $\tau$-smooth probability measure on $X$. 
\end{prop}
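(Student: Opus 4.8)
The plan is to check separately that $\e\mu$ is a Borel probability measure and that it is $\tau$-smooth; both facts reduce to interchanging the integral against $\mu$ with a suitable limit. Strictness is immediate, since $p(\varnothing)=0$ for every $p\in PX$ gives $(\e\mu)(\varnothing)=0$. For normalization, every $p\in PX$ satisfies $p(X)=1$, so that
$$
(\e\mu)(X) \;=\; \int_{PX} p(X)\,d\mu(p) \;=\; \int_{PX} 1 \, d\mu \;=\; \mu(PX) \;=\; 1 ,
$$
using that $\mu$ is itself a probability measure. For countable additivity, I would take pairwise disjoint Borel sets $A_1,A_2,\dots$ with union $A$. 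Each $\eval_{A_n}:PX\to[0,1]$ is measurable by \Cref{evalmeasurable}, hence so is every partial sum, and since each $p$ is countably additive the partial sums $p\mapsto\sum_{n\le N}p(A_n)=p\!\left(\bigcup_{n\le N}A_n\right)$ increase pointwise to $p\mapsto p(A)$. The monotone convergence theorem then yields
$$
(\e\mu)(A) \;=\; \int_{PX} p(A)\,d\mu \;=\; \lim_{N\to\infty}\sum_{n=1}^N \int_{PX} p(A_n)\,d\mu \;=\; \sum_{n=1}^\infty (\e\mu)(A_n) ,
$$
so that $\e\mu$ is a Borel probability measure.

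For $\tau$-smoothness, let $(U_\lambda)_{\lambda\in\Lambda}$ be a directed net of open subsets of $X$ with union $U$. The key observation is that the evaluation maps $\eval_{U_\lambda}:PX\to[0,\infty]$ are lower semicontinuous by the very definition of the A-topology (\Cref{Atopology}), and that $\{\eval_{U_\lambda}\}_{\lambda\in\Lambda}$ is a directed family in $\lsc{PX}$, since $U\mapsto p(U)$ is monotone and the net $(U_\lambda)$ is directed. Moreover, because each $p\in PX$ is itself $\tau$-smooth, the pointwise supremum of this family is exactly $p\mapsto p(U)$. Applying \Cref{lsctausmooth} to the $\tau$-smooth measure $\mu$ on $PX$ and this directed family gives
$$
(\e\mu)(U) \;=\; \int_{PX} p(U)\,d\mu \;=\; \sup_{\lambda}\int_{PX} p(U_\lambda)\,d\mu \;=\; \bigvee_\lambda (\e\mu)(U_\lambda) ,
$$
which is precisely the $\tau$-smoothness condition.

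The main obstacle is the $\tau$-smoothness step, and specifically the recognition that it can be reduced to \Cref{lsctausmooth} applied at the level of $PX$: one must verify that the $\eval_{U_\lambda}$ form a directed net of lower semicontinuous functions on $PX$ (which is guaranteed by \Cref{Atopology}) and, crucially, that their pointwise supremum computes $p\mapsto p(U)$, which is exactly where the $\tau$-smoothness of each individual $p$ enters. Once this identification is made, the remaining arguments are routine measure theory: normalization follows from the probability axioms and countable additivity from monotone convergence together with the measurability supplied by \Cref{evalmeasurable}.
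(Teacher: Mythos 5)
Your proof is correct and follows essentially the same route as the paper's: establish $\sigma$-additivity by interchanging the integral with the series of nonnegative terms (the paper phrases this as Fubini on $PX\times\N$, you as monotone convergence applied to the partial sums, which is the same argument), and establish $\tau$-smoothness by combining the $\tau$-smoothness of each $p\in PX$ with the directed-net monotone convergence theorem (\Cref{lsctausmooth}) applied to the $\tau$-smooth measure $\mu$ and the lower semicontinuous evaluation maps $\eval_{U_\lambda}$. Your write-up is in fact slightly more explicit than the paper's, since it names \Cref{lsctausmooth} and verifies directedness and lower semicontinuity of the family $\{\eval_{U_\lambda}\}$ rather than leaving these implicit.
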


\begin{proof}
 The nontrivial properties to establish are $\sigma$-additivity and $\tau$-smoothness. For the former, let $(A_n)_{n\in\N}$ be a countable family of disjoint measurable subsets of $X$ and let $A$ be their union. Then
 \begin{align*}
  (\e \mu)(A) &= \int_{PX} p(A) \, d\mu(p) = \int_{PX} \left( \sum_{n=1}^{\infty} p(A_n) \right) \, d\mu(p) \\
  &= \sum_{n=1}^\infty \int_{PX}   p(A_n) \, d\mu(p) = \sum_{n=1}^{\infty} \e \mu (A_n) ,
 \end{align*}
 where the third step can be thought of as an application of Fubini's theorem to $PX \times \N$.

 Turning to $\tau$-smoothness, let $(U_\lambda)_{\lambda\in\Lambda}$ be a directed net of open sets with union $U$. Then, since every measure $p\in PX$ is $\tau$-smooth,
\begin{align*}
 (\e \mu)(U) & = \int_{PX} p(U) \, d\mu(p) = \int_{PX} \sup_{\lambda} p(U_\lambda) \, d\mu(p) \\
 & = \sup_{\lambda} \int_{PX}  p(U_\lambda) \, d\mu(p) = \sup_{\lambda} \, (\e \mu)(U_\lambda),
\end{align*}
where the third step is an application of $\tau$-smoothness of $\mu$ (in the form of \Cref{agreeonfunctions} and Scott continuity of integration against a continuous valuation).
\end{proof}

Hence we have a well-defined map $\e: PPX \to PX$. The next proposition implies that this map is continuous, which we therefore do not prove separately.

The inclusion $\iota: PX \hookrightarrow VX$ is, by definition, a homeomorphism onto its image. Therefore so is $\iota_* : VPX \hookrightarrow VVX$ by \Cref{V_subspaces}. By composing these two embeddings, we can consider $PPX$ as a subspace of $VVX$.

\begin{prop}
\label{PPVV}
Let $X$ be a topological space. Then the following diagram commutes.
\[
	\begin{tikzcd}
		PPX \ar{d}{\e} \ar{r}{\iota}	& VPX \ar{r}{\iota_*}	& VVX \ar{d}{\E} \\
		PX \ar{rr}{\iota}		&			& VX
	\end{tikzcd}
\]
\end{prop}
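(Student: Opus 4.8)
The plan is to prove commutativity pointwise. Both legs of the diagram send a given $\mu\in PPX$ to a continuous valuation in $VX$, and by \Cref{defn_cpv} a continuous valuation is nothing but a map on open sets; so it suffices to show that the valuations $\iota(\e\mu)$ and $\E(\iota_*\iota\mu)$ assign the same value to every open set $U\subseteq X$. Under the duality of \Cref{daniellval} this amounts to evaluating both functionals on the indicator $\1_U$, using $\pair{\nu}{\1_U}=\nu(U)$.

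First I would record the lower leg, which is immediate from the definition of $\e$:
\[
\iota(\e\mu)(U) \;=\; (\e\mu)(U) \;=\; \int_{PX} p(U)\, d\mu(p).
\]
Then I would unwind the upper leg from the outside in. By the definition of the multiplication $\E$,
\[
\pair{\E(\iota_*\iota\mu)}{\1_U} \;=\; \pair{\iota_*\iota\mu}{\pair{-}{\1_U}},
\]
where $\pair{-}{\1_U}\colon VX\to[0,\infty]$ is the evaluation functional $\nu\mapsto\nu(U)$, which lies in $\lsc VX$ because the sets $\theta(U,r)$ are open (\Cref{portmanteauval}). The definition of the pushforward $\iota_*$ rewrites this as $\pair{\iota\mu}{(\nu\mapsto\nu(U))\circ\iota}$, and the composite $(\nu\mapsto\nu(U))\circ\iota\colon PX\to[0,\infty]$ is simply $p\mapsto \iota(p)(U)=p(U)$.

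The crux, and the only step with genuine content, is to pass from the valuation pairing $\pair{\iota\mu}{\,p\mapsto p(U)\,}$ to the measure-theoretic integral $\int_{PX}p(U)\,d\mu(p)$. This is exactly \Cref{agreeonfunctions}, applied not on $X$ but on the space $PX$: the valuation $\iota\mu\in V(PX)$ is extendable, its extension being the measure $\mu$ itself, while the test function $p\mapsto p(U)$ belongs to $\lsc(PX)$ by the very definition of the A-topology (\Cref{Atopology}) and is bounded by $1$. Hence $\pair{\iota\mu}{\,p\mapsto p(U)\,}=\int_{PX}p(U)\,d\mu(p)$, which coincides with the lower leg and closes the argument.

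The main obstacle is therefore conceptual rather than computational: one must recognize that \Cref{agreeonfunctions} is being invoked with $PX$ as the ambient space and with $\mu$ as the extending measure, and verify that $p\mapsto p(U)$ qualifies as an admissible lower semicontinuous test function there; everything else is routine unwinding of the definitions of $\E$, $\iota_*$, and $\e$. I would close by noting that, since $\E$, $\iota$, and $\iota_*$ are continuous and $\iota,\iota_*$ are subspace embeddings, the commuting square also exhibits $\e$ as a restriction of the continuous map $\E$, and hence $\e\colon PPX\to PX$ is itself continuous, which is the auxiliary claim announced before the proposition.
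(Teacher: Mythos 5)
Your proof is correct and follows essentially the same route as the paper's: expand $\E$ via its defining duality formula, convert the resulting valuation pairing into the Lebesgue integral $\int_{PX} p(U)\,d\mu(p)$ (a step the paper performs implicitly, and which you correctly pin down as \Cref{agreeonfunctions} applied on the ambient space $PX$ with $\mu$ as the extending measure), and identify the result with $(\e\mu)(U)$. Your explicit unwinding of the pushforward $\iota_*$ before invoking \Cref{agreeonfunctions} is a minor presentational refinement of the paper's chain of equalities, not a different argument.
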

\begin{proof}
	Let $\mu \in PPX$ and let $\nu \in VVX$ be its image. We show that $\E \nu$ extends to the measure $\e \mu$, by showing that they evaluate to the same number on any open set $U \subseteq X$,
\begin{align*}
	(\E \nu) (U) &= \pair{\E \nu}{\1_U} = \pair{\nu}{\pair{-}{\1_U}} = \int_{VX} \pair{\rho}{\1_U} \, d \mu(\rho) \\
	&= \int_{PX} \eval_U(p) \, d\mu(p)  = \int_{PX} p(U) \, d\mu(p)  = (\e \mu) (U). \qedhere
\end{align*}
\end{proof}

Since the unit and the multiplication of $V$ restrict to $P$, the equations required for $P$ to be a monad follow automatically from those of $V$. In particular, we do not need to prove the associativity of $E$. We have proven the following theorem.

\begin{thm}
	The triple $(P,\delta,E)$ is a submonad of $(V, \delta, \E)$ on $\cat{Top}$.
\end{thm}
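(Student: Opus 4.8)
The plan is to recognize that a submonad is nothing more than a monad morphism $\iota \colon P \Rightarrow V$ whose components are monomorphisms, and that almost all of the required data has already been assembled. Since $P$ is a subfunctor of $V$ via the natural family of subspace embeddings $\iota_X \colon PX \hookrightarrow VX$ (each of which is in particular a monomorphism in $\cat{Top}$), it remains only to check three things: that the unit $\delta$ of $V$ factors through $P$, that the multiplication $\E$ of $V$ restricts to the map $\e$ constructed above, and that these restrictions then satisfy the monad axioms. My strategy is to obtain the first two as immediate consequences of results already in hand, and to deduce the third purely formally from the fact that each $\iota_X$ can be cancelled.

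First I would record the unit compatibility. The Dirac valuation $\delta_x$ extends to the Dirac measure, as noted immediately before the theorem, so $\delta_X \colon X \to VX$ factors as $\iota_X \circ \delta^P_X$ with $\delta^P_X \colon X \to PX$ the corestricted (still continuous) map; this is exactly the condition $\iota_X \circ \delta^P_X = \delta_X$. For the multiplication, the key input is \Cref{PPVV}: its commuting square says precisely that $\iota_X \circ \e = \E \circ (V(\iota_X) \circ \iota_{PX})$, where the composite $V(\iota_X)\circ \iota_{PX}$ is the horizontal composite of $\iota$ with itself and realizes $PPX$ as a subspace of $VVX$. This is exactly the monad-morphism condition relating $\e$ and $\E$. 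Continuity of $\e$ comes for free from the same diagram: $\e$ is the corestriction to the subspace $PX \subseteq VX$ of the continuous map $\E$ restricted to the subspace $PPX \subseteq VVX$, and a map into a subspace is continuous as soon as its composite with the inclusion is.

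It then remains to verify that $(P, \delta^P, \e)$ is a monad, and here I would argue formally rather than by direct computation. Naturality of $\e$, the two unit laws, and associativity are all identities between parallel maps with some codomain $PX$; to prove each, I would post-compose with the monomorphism $\iota_X$, rewrite everything in terms of $\delta$, $\E$, and the natural embeddings using the compatibilities just established together with the naturality of $\iota$, and invoke the corresponding already-proven identity for $(V, \delta, \E)$. Cancelling the monic $\iota_X$ then yields the identity for $P$. For instance, associativity of $\e$ follows because, after composing with $\iota_X$, both composites $PPPX \to PX$ become $\E \circ \E$ precomposed with the triple embedding $PPPX \hookrightarrow VVVX$, and these agree by associativity of $\E$.

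Because all the substantive analytic work---that $\e\mu$ is a genuine $\tau$-smooth probability measure, the measurability of the evaluation maps, and the compatibility square of \Cref{PPVV}---has already been carried out, I do not expect a genuine obstacle here. The only point requiring care is the bookkeeping of whiskerings and horizontal composites in the final formal step, so that each $V$-axiom is transported correctly along the embedding before $\iota_X$ is cancelled; this is routine diagram-chasing rather than a real difficulty.
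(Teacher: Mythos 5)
Your proposal is correct and follows essentially the same route as the paper: the unit restricts via the Dirac measure, the multiplication restricts via the square of \Cref{PPVV} (which also gives continuity of $\e$ by the subspace property), and the monad axioms for $P$ are then inherited from those of $V$. The paper compresses your final monomorphism-cancellation argument into the remark that the equations ``follow automatically'' once the structure maps restrict, but the underlying reasoning is the same.
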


\noindent Just as $V$, the monad $P$ is a strict 2-monad if we consider $\cat{Top}$ as a 2-category.

Through the submonad embedding, every algebra of $V$ is also an algebra of $P$ in a canonical way, resulting in a faithful functor from $V$-algebras to $P$-algebras. The question whether this forgetful functor is full or essentially surjective is related to the extension problem and is, at present, unanswered.

\subsection{Product and marginal probabilities}

Here, we show that $P$ is a commutative monad by equipping it with a commutative strength. This in particular implies that we have a natural transformation $PX \times PY \to P(X\times Y)$ implementing the formation of product probability measures. This is a non-trivial statement due to the discrepancy between the Borel $\sigma$-algebra on $X\times Y$ and the product of the Borel $\sigma$-algebras on $X$ and $Y$, which leads to known subtleties with the formation of product measures~\cite{borelproduct}.

As with the monad multiplication, this strength $X \times PY \to P(X \times Y)$ is inherited from the strength of $V$. While we only need to prove that the strength of $V$ restricts to $P$, it is more convenient to first construct the strength map $s : X \times PY \to P(X \times Y)$ explicitly, which we denote by the same symbol as for $V$.

\begin{lemma}\label{measslices}
	Let $X$ and $Y$ be topological spaces, and let $A\subseteq X\times Y$ be a Borel subset. Then, for each $x\in X$, the \emph{slice}
	\[
		A_x := \{ y \in Y \mid (x,y) \in A\}
	\]
	is a Borel subset of $Y$.
\end{lemma}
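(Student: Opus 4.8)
The plan is to reduce the statement to the measurability of the slice map $j_x : Y \to X \times Y$, $y \mapsto (x,y)$, already introduced in \Cref{defslice}. Recall from that notation that $A_x = j_x^{-1}(A)$, so it suffices to show that $j_x$ pulls back Borel sets to Borel sets. Since $j_x$ is continuous (as noted immediately after \Cref{defslice}), this will follow from the general fact that continuous maps are Borel measurable.

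First I would fix $x \in X$ and record that $j_x$ is continuous, so that the preimage $j_x^{-1}(W)$ of any open set $W \subseteq X \times Y$ is open in $Y$, and in particular Borel. Next I would invoke the standard \emph{good-sets} argument: the collection
$$
\Sigma \;\coloneqq\; \left\{ B \subseteq X \times Y \;:\; j_x^{-1}(B) \text{ is Borel in } Y \right\}
$$
is a $\sigma$-algebra, since taking preimages commutes with complements and countable unions. By the previous step, $\Sigma$ contains every open subset of $X \times Y$.

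Finally, because the Borel $\sigma$-algebra on $X \times Y$ is by definition the smallest $\sigma$-algebra containing the open sets, and $\Sigma$ is a $\sigma$-algebra containing all open sets, we conclude that $\Sigma$ contains every Borel set. Applying this to the given Borel set $A$ yields that $A_x = j_x^{-1}(A)$ is Borel in $Y$, as claimed.

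There is essentially no serious obstacle here: the whole argument is a routine application of the good-sets principle, and the only input beyond pure measure theory is the continuity of $j_x$, which is elementary. The one point worth stating explicitly, to avoid any confusion, is that $j_x$ being Borel measurable is a consequence of continuity together with the fact that the open sets generate the Borel $\sigma$-algebra, rather than requiring any separability or metrizability hypothesis on $X$ or $Y$; this is why the lemma holds for arbitrary topological spaces.
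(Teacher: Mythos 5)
Your proposal is correct and follows essentially the same route as the paper: both reduce the statement to the continuity of the slice map $j_x$ from \Cref{defslice} and then invoke the fact that continuous maps are Borel measurable, with your version merely spelling out the standard generating-class argument that the paper leaves implicit. No gap; nothing further needed.
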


\begin{proof}
	The map $y \mapsto (x,y)$ is continuous, and hence Borel measurable, and $A_x$ is the preimage of $A$ under this map.
\end{proof}

For given $x \in X$ and $p \in PY$, we define the Borel probability measure $s(x,p)$ on $X \times Y$ on a Borel set $A\subseteq X\times Y$ as
$$
s(x,p)(A) \;\coloneqq\; p(A_x) .
$$
This is a $\tau$-smooth probability measure because taking the slice preserves arbitrary unions, intersections, and complements.

\begin{prop}
	Let $X$ and $Y$ be topological spaces. Then the following diagram commutes.
	\[
		\begin{tikzcd}
			X \times PY \ar{d}{s} \ar{r}{\id \times \iota}	& X \times VY \ar{d}{s} \\
			P(X \times Y) \ar{r}{\iota}			& V(X \times Y)
		\end{tikzcd}
	\]
\end{prop}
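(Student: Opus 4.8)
The plan is to observe that both composites in the diagram are continuous valuations on $X \times Y$, and that a continuous valuation is determined by its values on open sets. Hence it suffices to fix $(x,p) \in X \times PY$ and an arbitrary open set $W \subseteq X \times Y$, and to check that the two valuations $\iota\big(s(x,p)\big)$ and $s\big(x,\iota(p)\big)$ assign the same value to $W$.

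First I would compute the left-hand composite. Going down then right, we have $s(x,p) = \delta_x \otimes p \in P(X\times Y)$, and $\iota$ sends this $\tau$-smooth measure to its restriction to open sets. By the defining formula $(\delta_x \otimes p)(A) = p(A_x)$, applied to the open set $W$, this yields
$$
\iota\big(s(x,p)\big)(W) \;=\; (\delta_x \otimes p)(W) \;=\; p(W_x),
$$
where $W_x = j_x^{-1}(W)$ is the slice of $W$ at $x$.

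Next I would compute the right-hand composite. Going right then down, $\id \times \iota$ sends $(x,p)$ to $(x,\iota(p))$, where $\iota(p) \in VY$ is the valuation obtained by restricting the measure $p$ to the open subsets of $Y$. Then $s(x,\iota(p)) = (j_x)_* \iota(p)$, so by \Cref{V_functoriality} its value on $W$ is $\iota(p)\big(j_x^{-1}(W)\big) = \iota(p)(W_x)$. Now the slice $W_x$ is \emph{open} in $Y$ because $j_x$ is continuous (the fact recorded after \Cref{defslice}), and on open sets $\iota(p)$ agrees with $p$ by definition. Therefore
$$
s\big(x,\iota(p)\big)(W) \;=\; \iota(p)(W_x) \;=\; p(W_x).
$$

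Both composites thus evaluate to $p(W_x)$ on every open $W$, so the two continuous valuations coincide and the square commutes. I do not expect any genuine obstacle here: the statement reduces to unwinding the two definitions of the strength alongside the two elementary facts that slices of open sets are open and that $\iota$ preserves values on open sets. The only point worth emphasizing is \emph{why} testing on open sets is legitimate, namely that both sides are continuous valuations and a continuous valuation is pinned down by its open-set values; this is precisely the level at which the measure $\delta_x \otimes p$ and the pushed-forward valuation $(j_x)_*\iota(p)$ are arranged to agree.
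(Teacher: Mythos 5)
Your proof is correct and is essentially identical to the paper's: both arguments evaluate the two composites on an arbitrary open $W \subseteq X \times Y$ and reduce each to $p(W_x)$, using that slices of open sets are open and that $\iota$ is restriction to open sets. The only cosmetic difference is that you dwell on why testing on opens suffices, which is immediate here since both composites land in $V(X\times Y)$, whose elements are by definition functions on $\op(X\times Y)$.
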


\begin{proof}
Suppose that $p\in PY$ and denote $\nu := \iota(p)$. On a product of open sets $U \times V \subseteq X \times Y$, we have
$$
s(x,p)(U \times V) \;=\; p((U \times V)_x) \;=\; \1_U(x) \, p(V) \;=\; \1_U(x) \, \nu(V) \;=\; s(x,\nu)(U \times V) ,
$$
 which is sufficient to compare two continuous valuations by \Cref{portmanteauval}.
\end{proof}

It is clear that $s : X \times PY \to P(X \times Y)$ inherits the naturality and strength properties from the strength of $V$. Thus we have shown the following.

\begin{cor}
The strength $s$ of $V$ restricts to a strength of the submonad $P$ which makes $P$ into a commutative monad.
\end{cor}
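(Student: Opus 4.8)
The plan is to transfer the strength structure from $V$ to $P$ by exploiting that $P$ is a submonad of $V$ whose inclusion is componentwise injective. Concretely, for every space $Z$ the map $\iota : PZ \hookrightarrow VZ$ is a subspace embedding, and by \Cref{V_subspaces} so is $\iota_* : VPZ \hookrightarrow VVZ$, and hence the composite $PPZ \hookrightarrow VVZ$; all of these maps are in particular injective. Because an injective map is left-cancellable, any diagram of continuous maps between the $P$-spaces commutes as soon as it commutes after post-composition with the relevant embedding into the corresponding $V$-space. I would therefore reduce each axiom for $P$ to the already-established axiom for $V$ from \Cref{Vstrength}, using the compatibility square of the preceding proposition, namely $\iota \circ s = s \circ (\id \times \iota)$.

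First I would record that $s : X \times PY \to P(X \times Y)$ is well-defined and continuous. Well-definedness is exactly the fact, established above, that $\delta_x\otimes p$ is a $\tau$-smooth probability measure, and that it agrees with the $V$-strength under $\iota$ by the preceding proposition. Continuity is then immediate: since $P(X\times Y)$ carries the subspace topology from $V(X\times Y)$, the map $s$ into it is continuous if and only if $\iota \circ s = s \circ (\id \times \iota)$ is, and the latter is a composite of continuous maps.

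Next I would verify naturality and the four strength axioms. In each case the corresponding diagram for $V$ has already been proved in \Cref{Vstrength}, and every arrow appearing in the $P$-diagram is the restriction along $\iota$ of the corresponding arrow in the $V$-diagram; this follows from naturality of $\iota : P \Rightarrow V$ together with the fact that $V$ preserves the embeddings (\Cref{V_subspaces}), so that functorial images such as $s_*$, $a_*$, and $u_*$ on the $P$-side sit compatibly inside their $V$-counterparts. Chasing the appropriate embedding squares, each $P$-diagram maps under the injection into $VV(\,\cdot\,)$ or $V(\,\cdot\,)$ onto the corresponding $V$-diagram; since the injection is left-cancellable, commutativity on the $V$-side yields commutativity on the $P$-side. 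The same argument applied to the compatibility diagram between strength and costrength (the Fubini diagram for $V$) shows that $s$ and its symmetric costrength $t$ are compatible, so that $P$ is commutative.

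The main obstacle is purely bookkeeping: one must check that each functorial arrow on the $P$-side really is the restriction of its $V$-counterpart, i.e.\ that the relevant naturality squares for $\iota$ and the preservation-of-embedding squares for $V$ assemble correctly, so that the entire $P$-diagram embeds into the $V$-diagram. Once this compatibility of embeddings is in place, no genuine computation with measures is needed, and the result is inherited from $V$ entirely formally, exactly as the monad axioms for $P$ were inherited from those of $V$.
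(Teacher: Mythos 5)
Your proposal is correct and follows essentially the same route as the paper: the paper also constructs the restricted map $s : X \times PY \to P(X\times Y)$ explicitly via $\delta_x \otimes p$, proves the compatibility square $\iota \circ s = s \circ (\id \times \iota)$, and then inherits continuity, naturality, the strength axioms, and commutativity formally from $V$ through the (injective) subspace embeddings. The only difference is that the paper compresses the final inheritance step into the single remark that the properties are ``clearly'' inherited, whereas you spell out the left-cancellability bookkeeping explicitly, which is a faithful elaboration rather than a different argument.
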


The commutativity of the strength of $P$ is Fubini's theorem~\cite{kock_distributions}. In particular, $P$ is also a symmetric monoidal monad (\Cref{comm_monads}), and the operation of forming product measures is a natural transformation $PX \times PY \to P(X \times Y)$. This recovers the following result of Ressel.

\begin{cor}[Theorem~1 in \cite{ressel77}]\label{ressel}
	The product of two $\tau$-smooth Borel measures on any two topological spaces $X$ and $Y$ extends to a $\tau$-smooth Borel measure on the product space $X \times Y$.
\end{cor}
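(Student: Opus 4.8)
The plan is to read the statement off the commutative-monad structure on $P$ just established, reducing the measure-theoretic content to an identification of the abstract monoidal product with the classical product measure. First I would observe that it suffices to treat probability measures: a nonzero finite $\tau$-smooth Borel measure is a positive scalar multiple of an element of $P(\,\cdot\,)$, and both the formation of products and the property of $\tau$-smoothness are compatible with rescaling, so the finite case reduces to the probabilistic one (the zero measure being trivial). So fix $p \in PX$ and $q \in PY$. The commutative strength supplies the product $p \otimes q \in P(X \times Y)$, which by construction is a $\tau$-smooth Borel probability measure, and this already yields a $\tau$-smooth Borel measure on $X \times Y$. What remains is to verify that $p \otimes q$ restricts to the measure-theoretic product on the product $\sigma$-algebra, and by \Cref{intprod} the measurable rectangles $C \times B$ form a $\pi$-system generating this $\sigma$-algebra, so by the $\pi$-$\lambda$ theorem it is enough to check that $(p \otimes q)(C \times B) = p(C)\,q(B)$ for all Borel $C \subseteq X$ and $B \subseteq Y$.

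To evaluate $p \otimes q$ on a Borel set I would unwind the definition of the product coming from the strength. Tracing the composite $\e \circ s_* \circ t$ that defines the product measure, and using the explicit description $s(x,q) = \delta_x \otimes q$ of the strength together with the definition of $\e$ as integration and the measurability of $x \mapsto q(A_x)$ guaranteed by \Cref{measslices,evalmeasurable}, the product is given by the iterated integral
$$
(p \otimes q)(A) \;=\; \int_X q(A_x)\, dp(x)
$$
for every Borel $A \subseteq X \times Y$, where $A_x$ is the slice of \Cref{defslice}. Specializing to $A = C \times B$ gives $A_x = B$ for $x \in C$ and $A_x = \varnothing$ otherwise, so that $q(A_x) = \1_C(x)\,q(B)$ and hence $(p \otimes q)(C \times B) = q(B)\int_X \1_C\, dp = p(C)\,q(B)$, exactly as required.

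The main obstacle is this middle step: confirming that the abstractly defined monoidal product $p \otimes q$, built from the monad multiplication $\e$ and the strength, genuinely computes the concrete iterated integral $\int_X q(A_x)\,dp(x)$. This rests on the measurability results \Cref{measslices,evalmeasurable}, which ensure that $x \mapsto q(A_x)$ is a legitimate integrand, and on matching the description of $\e$ as integration against the evaluation functionals with the slice formula for $\delta_x \otimes q$. Once this identification is in place, agreement on rectangles and the $\pi$-$\lambda$ theorem finish the proof, while $\tau$-smoothness is supplied for free by membership of $p \otimes q$ in $P(X \times Y)$.
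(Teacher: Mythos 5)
Your proof is correct and takes essentially the paper's own route: the paper also obtains the product as the image of the monoidal structure map $\nabla : PX \times PY \to P(X\times Y)$ arising from the restricted commutative strength, so that $\tau$-smoothness and Borel-measurability on the product topology come for free from membership in $P(X\times Y)$. The only difference is one of explicitness: the paper identifies $\nabla(p,q)$ with the classical product through its values $p(U)\,q(V)$ on open rectangles (inherited from the computation for $V$), leaving the generating-class argument implicit, whereas you unwind the strength composite into the iterated integral $\int_X q(A_x)\,dp(x)$ and verify measurable rectangles via the $\pi$-$\lambda$ theorem --- a harmless and arguably more self-contained elaboration, as is your rescaling reduction from finite measures to the probability measures that actually constitute $P$.
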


Moreover, since $\iota$ is a morphism of commutative monads by construction, \Cref{eqformorphs} implies that $\iota$ is also a morphism of monoidal monads. 

\begin{cor}
 The inclusion $\iota:P\to V$ is a monoidal natural transformation. 
\end{cor}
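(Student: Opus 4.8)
The goal is to show that $\iota \colon P \Rightarrow V$ is compatible with the lax monoidal structures of the two monads: for all spaces $X,Y$ the square relating the product map $PX \times PY \to P(X\times Y)$ with the product map $VX \times VY \to V(X\times Y)$ should commute, and $\iota$ should respect the monoidal units $1 \to P1$ and $1 \to V1$. The plan is to deduce this formally from structural results already established, rather than to recompute products of measures by hand.

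First I would assemble the two inputs. The inclusion $\iota$ is a morphism of monads, as $P$ is a submonad of $V$. Moreover, the preceding corollary shows that the strength $s$ of $P$ is exactly the restriction of the strength $s$ of $V$ along $\iota$, i.e.\ $\iota \circ s = s \circ (\id \times \iota)$; with the symmetric statement for the costrength, this makes $\iota$ a morphism of commutative monads. By \Cref{eqformorphs}, a morphism of commutative monads is the same datum as a morphism of symmetric monoidal monads, whose underlying natural transformation is by definition monoidal. This already proves the corollary.

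If a self-contained verification is wanted, the product square can be checked directly, and this is where the only real content sits. Fixing $p \in PX$ and $q \in PY$, both $\iota(p \otimes q)$ and $\iota(p) \otimes \iota(q)$ are continuous valuations on $X \times Y$, so by \Cref{prodbasisV} it suffices to compare them on open rectangles $U \times V$, where both equal $p(U)\,q(V)$. The right-hand side is the explicit formula for the product valuation; the left-hand side uses that the product measure $p \otimes q$---which exists and is $\tau$-smooth by \Cref{ressel}---restricts on open sets to the product valuation. The unit condition is immediate, as each monoidal unit sends the point of $1$ to the Dirac measure, respectively Dirac valuation, at the unique point. The one subtlety to keep in mind is the mismatch between the Borel $\sigma$-algebra of $X \times Y$ and the product of the Borel $\sigma$-algebras, flagged at the start of this subsection; but the commutative-strength construction of the product map of $P$ is built through the same strength-and-multiplication recipe as that of $V$, and $\iota$ intertwines both ingredients, so this causes no obstruction. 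The corollary is thus a formal consequence of \Cref{eqformorphs} once the strength of $V$ is known to restrict to $P$.
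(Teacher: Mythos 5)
Your proposal is correct and takes essentially the same route as the paper: the paper also observes that $\iota$ is a morphism of commutative monads by construction (since the strength of $V$ restricts to the strength of $P$, per the preceding corollary) and then applies \Cref{eqformorphs} to conclude that $\iota$ is a monoidal natural transformation. Your supplementary hand verification on open rectangles is consistent with the paper's remarks but is not part of its proof.
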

In other words, the following diagram commutes, which implies that the product of two extendable continuous valuations is extendable. 
 $$
	\begin{tikzcd}
		PX \times PY \ar{r}{\nabla} \ar{d}{\iota\times \iota}	& P(X\times Y) \ar{d}{\iota} \\
		VX \times VY \ar{r}{\nabla}		& V(X\times Y)
	\end{tikzcd}
$$

\section{The support as a morphism of commutative monads}\label{secmonadmorph}

We now consider the support of a continuous valuation, and, more specifically, of a $\tau$-smooth Borel measure. Our main result is that taking supports is a natural transformation $\supp : V \Rightarrow H$, which is a morphism of commutative monads. Since $P \subseteq V$ is a commutative submonad, it follows that taking the supports of $\tau$-smooth probability measures is a morphism of commutative monads $\supp : P \Rightarrow H$.
The definition of support of a valuation is straightforward and analogous to the one of a measure, but it seems that it has not yet appeared in the literature, apart from a brief appearance in the concurrent work of Goubault-Larrecq and Jia~\cite[Example~4.4]{algebras}.

Intuitively, the support is the set of points of positive mass. The commonly used notion of support is the following, defined for a Borel measure $m$ on a space $X$. A measurable set $A \subseteq X$ has \emph{full measure} if and only if $m(X \setminus A)=0$.

\begin{deph}\label{defsupport}
	Let $X$ be a topological space and let $m$ be a Borel measure on $X$. The \emph{support} $\supp(m)$ of $m$ is the intersection of all closed subsets of $X$ that have full measure. 
\end{deph}

The support is the intersection of closed sets and therefore closed. The support of $\tau$-smooth measures is particularly well-behaved, as the following result shows.

\begin{prop}[Proposition~7.2.9 of~\cite{bogachev}]\label{tausmoothsupp}
Let $X$ be a topological space and let $m$ be a $\tau$-smooth Borel measure on $X$. Then $\supp(m)$ has full measure.
\end{prop}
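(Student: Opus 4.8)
The plan is to pass to complements and reduce the statement to the claim that the union of all null open sets is null. First I would rewrite $\supp(m)$ dually. A closed set $C \subseteq X$ has full measure precisely when the open set $X \setminus C$ is null, so taking complements turns the defining intersection into a union:
$$
X \setminus \supp(m) \;=\; \bigcup \left\{ U \subseteq X : U \text{ open and } m(U) = 0 \right\} .
$$
Hence the assertion that $\supp(m)$ has full measure is equivalent to saying that the union of all null open sets is itself null.

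The obstacle is that an arbitrary family of open sets need not be directed, so $\tau$-smoothness cannot be applied to this union as it stands. The key step is therefore to pass to the family $\mathcal{D}$ of all \emph{finite} unions of null open sets. This family is directed under inclusion, since the union of two finite unions is again a finite union, and by finite subadditivity every member of $\mathcal{D}$ is still null. Moreover $\bigcup \mathcal{D}$ coincides with the union of all null open sets, because each null open set already occurs in $\mathcal{D}$ as a one-term union.

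Finally I would apply $\tau$-smoothness to the directed net $\mathcal{D}$, obtaining
$$
m\left( X \setminus \supp(m) \right) \;=\; m\left( \bigcup \mathcal{D} \right) \;=\; \sup_{W \in \mathcal{D}} m(W) \;=\; 0 ,
$$
since $m(W) = 0$ for every $W \in \mathcal{D}$. This shows $\supp(m)$ has full measure. The one genuinely substantive point—and the only place where $\tau$-smoothness is needed rather than mere countable additivity—is the reduction to a directed family: without directedness the supremum formula does not apply, and for a general Borel measure the union of null open sets can indeed fail to be null. Everything else is routine bookkeeping with complements and finite subadditivity.
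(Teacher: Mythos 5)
Your proof is correct and follows essentially the same route as the paper: identify the complement of the support as the union of all null open sets, observe directedness, and apply $\tau$-smoothness. The only cosmetic difference is that you pass to the family of \emph{finite} unions of null open sets, whereas the paper notes that this family coincides with the family of null open sets itself (a finite union of null open sets is again a null open set), so that family is already directed as it stands.
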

\begin{proof}
The open set $U = X \setminus \supp (m)$ is the union of the family $(U_\lambda)_{\lambda\in\Lambda}$ of all open sets of measure zero. Given any two open sets of measure zero, their union has measure zero too, hence $(U_\lambda)_{\lambda\in\Lambda}$ is a directed net. The $\tau$-smoothness of $m$ implies
\begin{equation*}
m(U) = \sup_\lambda m(U_\lambda) = 0 .
\end{equation*}
Therefore $\supp(m) = X \setminus U$ has full measure. 
\end{proof}

It is clear from the proof that the support has full measure \emph{if and only if} $m$ is $\tau$-smooth on the open null sets. The following standard example, due to Dieudonné, shows that the support of a measure that is not $\tau$-smooth may not have full measure.

\begin{eg}[Dieudonné measure, Example~7.1.3 in~\cite{bogachev}]
\label{dieudonne}
We consider the initial segment of the ordinal numbers $X = [0, \omega_1]$, up to and including the first uncountable ordinal $\omega_1$. We equip this totally ordered set with the topology generated by intervals of the form $\{ x: a < x\}$, $\{ x: x < b\}$, or $\{ x: a < x < b\}$ for some $a,b \in X$. The Dieudonné measure is the Borel measure on $X$ defined as
\begin{equation*}
m (B) := \begin{cases}
	1 & \mbox{if there exists } F \subseteq B \setminus \{\omega_1\} \mbox{ which is closed and uncountable,} \\
	0 & \mbox{otherwise}.
\end{cases}
\end{equation*}
The measure $m$ is Borel~\cite[Examples~7.1.3 and~6.1.21]{bogachev}. The family $\left( [0,x) \right)_{x < \omega_1}$ is a directed net of open sets. We have $m \left( [0,x) \right) = 0$, since $[0,x)$ is countable for every $x < \omega_1$. But $\bigcup_{x < \omega_1} [0, x)$ is uncountable. We have
\begin{equation*}
0 = \sup_{x < \omega_1} m \left( [0,x) \right) < m \left( \bigcup_{x < \omega_1} [0,x) \right) = 1 .
\end{equation*}
Hence the Dieudonné measure is not $\tau$-smooth. Since every closed interval of the form $[x,\omega_1]$ for $x < \omega_1$ has full measure, we have $\supp (m) = \{ \omega_1\}$. But since $m( \{ \omega_1\} ) = 0$, the support does not have full measure.
\end{eg}

Due to the above example, some authors, for example Bogachev~\cite{bogachev}, require the support to have full measure by definition. In this case, some measures do not have a support. Since the present work focuses on $\tau$-smooth measures, the difference of definitions will not lead to ambiguity.

\begin{cor}\label{pointinsupport}
 Let $X$ be a topological  space, let $m$ be a $\tau$-smooth Borel measure on $X$, and let $U\subseteq X$ be open. Then $m(U)>0$ if and only if $U \cap \supp(m) \ne \varnothing$.  
\end{cor}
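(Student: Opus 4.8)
The plan is to prove the two implications separately, each by contraposition, with \Cref{tausmoothsupp} supplying the content of one direction and the very definition of the support (\Cref{defsupport}) supplying the other.

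First I would show that $m(U) > 0$ implies $U \cap \supp(m) \neq \varnothing$ by establishing its contrapositive. Assuming $U \cap \supp(m) = \varnothing$, we have $U \subseteq X \setminus \supp(m)$. The crucial input is \Cref{tausmoothsupp}: since $m$ is $\tau$-smooth, its support has full measure, so $m\big(X \setminus \supp(m)\big) = 0$. Monotonicity of $m$ then forces $m(U) = 0$, which is what we want.

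For the converse, that $U \cap \supp(m) \neq \varnothing$ implies $m(U) > 0$, I would again argue by contraposition. Assuming $m(U) = 0$, the complement $C \coloneqq X \setminus U$ is closed, and because $m(X \setminus C) = m(U) = 0$, it is a closed set of full measure in the sense of \Cref{defsupport}. Since the support is by definition the intersection of all such sets, we obtain $\supp(m) \subseteq C = X \setminus U$, that is $U \cap \supp(m) = \varnothing$. Notice that this second direction uses no regularity of $m$ at all.

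There is no real obstacle here; the statement is a short consequence of the two quoted results. The only point worth flagging is that $\tau$-smoothness is genuinely needed, and enters solely through \Cref{tausmoothsupp} in the first direction. Without it the support may fail to have full measure, as the Dieudonné measure of \Cref{dieudonne} demonstrates, and the corollary would break down: in that example $\supp(m) = \{\omega_1\}$ while every open neighborhood of $\omega_1$ has positive measure yet meets the support, which is consistent, but the open sets $[0,x)$ miss the support and still one must check that no positive-measure open set is disjoint from it — a check that is exactly what $\tau$-smoothness secures in general.
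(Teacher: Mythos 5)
Your proof is correct and takes essentially the same route as the paper, which states this corollary without a separate proof precisely because it follows immediately from \Cref{tausmoothsupp} together with \Cref{defsupport}: your first contrapositive (disjointness from the support forces measure zero, via full measure of the support) and your second (a null open set's closed complement has full measure, hence contains the support) are exactly that intended argument. Your observation that $\tau$-smoothness is used only in the first direction, and that the Dieudonn\'e measure of \Cref{dieudonne} witnesses its necessity, is also accurate.
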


\subsection{The support transformation $V \Rightarrow H$}\label{precontsupport}

We turn to the case of valuations. In view of the discussion of the support of Borel measures, we give a definition of the support of a continuous valuation. It is phrased in the formalism of \Cref{hyperspace}, and uses the function $\sgn : \Rplusext \to S$ defined as
\[
	\sgn(r) \;\coloneqq\;
	\begin{cases}
		1 & \text{if } r > 0, \\
		0 & \text{otherwise},
	\end{cases}
\]
and the function $\ngs : S \to \Rplusext$ defined as $\ngs(0) \coloneqq 0$ and $\ngs(1) \coloneqq \infty$.

Note that both of these maps are continuous, and that $\ngs$ is right adjoint to $\sgn$. We therefore have a universal way of turning an open $U \in S^X$ into a function $\ngs \circ U \in \Rplusext^X$, namely the function that is infinite on $U$ and vanishes elsewhere.

\begin{deph}[Support of a continuous valuation]\label{newdefsupport}
	Let $X$ be a topological space and consider $\nu \in VX$. The support of $\nu$ is the closed set $\supp(\nu) \in HX$ characterized by
	\begin{equation}
		\label{supp_def}
		\pair{\supp(\nu)}{U} \;=\; \sgn( \pair{\nu}{\ngs \circ U} )
	\end{equation}
	for every open $U \in S^X$.
\end{deph}

Thus $\supp(\nu)$ is defined such that pairing it with an open set consists of forcing the open set to take values in the right codomain in a universal way, applying the pairing with $\nu$, and then transporting back in a universal way. As with the double dualization construction of $H$ and $V$, the abstract definition~\eqref{supp_def} facilitates proofs by making them into mere unfoldings of definitions.

Phrased more concretely,~\eqref{supp_def} states that an open set $U$ is disjoint from $\supp(\nu)$ if and only if $\pair{\nu}{\ngs \circ U} = 0$. The definition of $\ngs$ and the linearity of $\nu$ imply that this happens if and only if $\nu(U) = 0$. In other words, the support is characterized as the largest closed set with the property $\nu(X \setminus \supp(\nu)) = 0$. In particular, the support of an extendable valuation equals the support of its extension to a Borel measure as defined in \Cref{defsupport}.

We want to record the following intuitive statement, which will not play any further role in this article.

\begin{prop}
Let $X$ be a topological space, $\nu \in VX$ a continuous valuation, and $f \in \Rplusext^X$. Then
\begin{equation}\label{signsupp}
	\sgn(\pair{\nu}{f}) = \pair{\supp(\nu)}{\sgn \circ f} .
\end{equation}
\end{prop}

In down-to-earth terms, this means that $\pair{\nu}{f} = 0$ is equivalent to $f|_{\supp(\nu)} = 0$, which is a property that one would expect from a well-behaved notion of support.

\begin{proof}
	By \Cref{supp_def}, we have to show that
	\[
		\sgn(\pair{\nu}{f}) \;=\; \sgn(\pair{\nu}{\ngs \circ \sgn \circ f}),
	\]
	which means that the integral of $f$ vanishes if and only if the integral of the function where all positive values are rounded up to $\infty$ vanishes. Since, with respect to scalar multiplication from $\Rplusext$, we have $\ngs \circ \sgn \circ f = \infty f$ this is a consequence of the $\Rplusext$-linearity of the pairing
$$
\sgn(\pair{\nu}{\ngs \circ \sgn \circ f}) \;=\; \sgn\left( \pair{\nu}{\infty f} \right) \;=\; \sgn\left( \infty \, \pair{\nu}{n\, f} \right) \;=\; \sgn(\pair{\nu}{f}) ,
$$
where the last step can be thought of as using the equation $\sgn \circ \ngs \circ \sgn = \sgn$.
\end{proof}

We now aim towards showing that $\supp : V \Rightarrow H$ is a morphism of monads. Its continuity is obvious as the right-hand side of~\eqref{supp_def} is continuous in $\nu$. Similar continuity statements for different choices of topologies are known (such as Theorem~17.1 in \cite{infdynana}). We want to stress that the continuity of the support map for the lower Vietoris topology on $HX$ can be interpreted as \emph{lower semicontiuity} rather than continuity, as the following example shows. 

\begin{eg}\label{suppislsc}
Let $X$ be the two-element set $\{x,y\}$ equipped with the discrete topology. Consider the following sequence of measures (or the corresponding valuations),
$$
\nu_n = \dfrac{n-1}{n} \,\delta_x + \dfrac{1}{n} \,\delta_y
$$
for $n\in \N$. In $VX$ (or in $PX$) this sequence tends to $\delta_x$. We have $\supp (\nu_n) = \{x,y\}$ for all $n \in N$, while $\supp(\delta_x)=x$. Hence
$$
 \supp \left( \lim_{n\to\infty} \nu_n \right) \subsetneq \lim_{n\to\infty} \supp(\nu_n) .
$$
The support of the limit can be smaller than the limit of the supports, a property that parallels the lower semicontinuity of real functions.
\end{eg}

\noindent We now turn to naturality, which relies on the Scott continuity of valuations.

\begin{prop}\label{propnaturality}
 Let $f: X \to Y$ be continuous. The following diagram commutes.
 \begin{equation*}
 \begin{tikzcd}
  VX \ar{r}{\supp} \ar{d}{f_*} & HX \ar{d}{f_\sharp} \\
  VY \ar{r}{\supp} & HY
 \end{tikzcd}
 \end{equation*}
\end{prop}
\begin{proof}  
	For every $U \in S^Y$, we have
	\begin{align*}
		\pair{\supp(f_*\nu)}{U} \;&=\; \sgn(\pair{f_*\nu}{\ngs \circ U}) \;=\; \sgn(\pair{\nu}{\ngs \circ U \circ f}) \\
		&=\; \pair{\supp(\nu)}{U \circ f} \;=\; \pair{f_\sharp(\supp(\nu))}{U} . \qedhere
	\end{align*}
\end{proof}

Therefore the support induces a natural transformation $\supp:V \Rightarrow H$, which can be restricted to a natural transformation $\supp : P \Rightarrow H$. On the set of \emph{all} Borel measures, the support as given in \Cref{defsupport} would not be natural. We give an example using the Dieudonné measure of \Cref{dieudonne}.

\begin{eg}[Dieudonné measure, continued]
Consider $Y=[0, \omega_1)$, the space of countable ordinals (which can be identified with $\omega_1$). We equip this totally ordered set with the subspace topology of $X$ from \Cref{dieudonne} and consider the Dieudonné measure $m$ from \Cref{dieudonne}, which is not $\tau$-smooth. The measure $m$ restricted to $Y$ has $\supp(m) = \varnothing$. Consider the map $f: Y \to \{\bullet\}$ into the singleton space. We have $\supp ( f_* m ) = \supp ( \delta_\bullet ) = \{ \bullet \}$ while $f_\# \supp (m) = f_\# \varnothing = \varnothing$. 
\end{eg}

The following counterexample shows that the notion of support is also not natural for signed measures, which is why we do not expect a generalization of our results to the signed case.

\begin{eg}
	Consider the two-point space $\{a,b\}$ with the finite signed measure $m := \delta_a - \delta_b$. The map into the singleton space $f: \{a,b\} \to \{\bullet\}$ yields $f_* m = 0$. The usual definition of the support of a signed measure leads to $\supp (m) = \{a,b\}$, and hence $f_\sharp(\supp(m)) = 1$. But we have $\supp \left( f_* m \right) = \varnothing$.
\end{eg}

\subsection{The support is a morphism of monads}

\begin{prop}\label{propmmorph}
 For any topological space $X$, the following two diagrams commute.
 \[
  \begin{tikzcd}
    & VX \ar{dd}{\supp} \\
   X \ar{ur}{\delta} \ar[swap]{dr}{\sigma} \\
    & HX 
  \end{tikzcd}
  \qquad\qquad
  \begin{tikzcd}
    VVX \ar{d}{\supp} \ar{r}{\E} & VX \ar{dd}{\supp} \\
    HVX \ar{d}{\supp_\sharp} \\
    HHX \ar{r}{\U} & HX 
  \end{tikzcd}
 \]
\end{prop}

We refer to these diagrams as the \emph{unit} and the \emph{multiplication} diagram, since they express the compatibility of the support map with the respective monad maps. The unit diagram says that the support of the Dirac valuation $\delta(x)$ equals $\cl(\{x\})$. The multiplication diagram says: If the valuation $\nu\in VX$ is the integral of the valuation $\xi \in VVX$, then the support of $\nu$ is the closure of the union of the supports of all the valuations in the support of $\xi$.

To illustrate this statement in the simplest case, suppose that $X$ is finite and discrete and that $\xi \in VVX$ is finitely supported. Then $VX$ can be identified with the simplex of probability vectors $(p_x)_{x\in X}$ and $\E\xi$ is a finite convex combination of these. The statement is then that the set of nonzero components of $\E\xi$ coincides with the union of the sets of nonzero components in each term contributing to the convex combination. In the discrete case this statement is straightforward to prove, however, to the best of our knowledge, it has never appeared in a published document.\footnote{We did receive an independent proof of the statement for the finite case from G.~van Heerdt, J.~Hsu, J.~Ouaknine, and A.~Silva in a personal communication.} Our result can be thought of as a generalization to continuous distributions. Despite this, the proof is purely formal and does not require any topological or measure-theoretic input.

\begin{proof}[Proof of \Cref{propmmorph}] 
Considering the unit diagram, we have, for any $x\in X$ and any open $U \in S^X$,
$$
\pair{\supp(\delta(x))}{U} \;=\; \sgn(\pair{\delta(x)}{\ngs \circ U}) \;=\; \sgn( \ngs(U(x)) \;=\; U(x) \;=\; \pair{\sigma(x)}{U} .
$$
Considering the multiplication diagram, let $\xi \in VVX$. Using \Cref{signsupp} in the third and fourth step, we obtain that, for any $U \in S^X$,
	\begin{align*}
		\pair{\supp(\E\xi)}{U} \;&=\; \sgn(\pair{\E\xi}{\ngs \circ U}) \;=\; \sgn(\pair{\xi}{\pair{-}{\ngs \circ U}}) \\
		&=\; \pair{\supp(\xi)}{\sgn(\pair{-}{\ngs \circ U})} \;=\; \pair{\supp(\xi)}{\pair{\supp(-)}{U}} \\
		&=\; \pair{\supp_\sharp(\supp(\xi))}{\pair{-}{U}} \;=\; \pair{\U(\supp_\sharp(\supp(\xi)))}{U}. \qedhere
	\end{align*}
\end{proof}

\noindent We can summarize our results so far as follows.

\begin{cor}\label{thmmonadmorph}
 The formation of the support of a continuous valuation is a morphism of monads
 \[
	 \supp:(V,\delta,\E)\longrightarrow (H,\sigma,\U).
 \]
\end{cor}

\subsection{Consequences for algebras}
\label{conseqalg}

It is well-known that a morphism of monads on the same category induces a functor between the respective categories of algebras in the opposite direction.

\begin{prop}
 Let $(S,\eta_S,\mu_S)$ and $(T,\eta_T,\mu_T)$ be monads on a category $\cat{C}$ and let $m:S\Rightarrow T$ be a morphism of monads. Then every $T$-algebra $(A,a)$ can be equipped with an $S$-algebra  structure via $ (A,a) \mapsto (A, a\circ m)$. Moreover, a $T$-algebra morphism $f:(A,a)\to(B,b)$ induces an $S$-algebra morphism $f: (A, a \circ m) \to (B, b \circ m)$ in a functorial way.
\end{prop}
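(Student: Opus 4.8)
The plan is to take a $T$-algebra $(A,a)$ with structure map $a\colon TA\to A$ and equip $A$ with the candidate $S$-algebra structure map $a'\coloneqq a\circ m_A\colon SA\to A$, then verify the two $S$-algebra axioms directly from the defining data of $m$, namely the unit compatibility $m\circ\eta_S=\eta_T$ and the multiplication compatibility $m\circ\mu_S=\mu_T\circ(m\star m)$ (whose component reads $m_A\circ(\mu_S)_A=(\mu_T)_A\circ m_{TA}\circ S(m_A)$), together with the $T$-algebra axioms for $(A,a)$ and the naturality of $m$.

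First I would dispatch the unit axiom: $a'\circ(\eta_S)_A=a\circ m_A\circ(\eta_S)_A=a\circ(\eta_T)_A=\id_A$, where the middle step is unit compatibility of $m$ and the last is the unit axiom of $(A,a)$. The associativity axiom $a'\circ(\mu_S)_A=a'\circ S(a')$ is the step requiring care. Starting from the left-hand side I would rewrite $m_A\circ(\mu_S)_A$ via the multiplication axiom of $m$ as $(\mu_T)_A\circ m_{TA}\circ S(m_A)$, then invoke the $T$-algebra associativity $a\circ(\mu_T)_A=a\circ T(a)$ to reach $a\circ T(a)\circ m_{TA}\circ S(m_A)$. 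The crucial move is the naturality square of $m$ at the morphism $a\colon TA\to A$, which yields $T(a)\circ m_{TA}=m_A\circ S(a)$; substituting this and recognizing $S(a')=S(a)\circ S(m_A)$ collapses the expression to $a\circ m_A\circ S(a)\circ S(m_A)=a'\circ S(a')$, as desired.

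For the functorial statement, given a $T$-algebra morphism $f\colon(A,a)\to(B,b)$, I would show that the same underlying map $f$ is an $S$-algebra morphism $(A,a\circ m_A)\to(B,b\circ m_B)$: from the $T$-morphism condition $f\circ a=b\circ T(f)$ and the naturality square $m_B\circ S(f)=T(f)\circ m_A$ one gets $f\circ(a\circ m_A)=b\circ T(f)\circ m_A=b\circ m_B\circ S(f)=(b\circ m_B)\circ S(f)$. Since the assignment leaves the underlying morphism $f$ unchanged, it visibly preserves identities and composites, so it is a functor from $T$-algebras to $S$-algebras.

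The main obstacle is purely the bookkeeping in the associativity step: one must select the correct form of the horizontal composite, namely $(m\star m)_A=m_{TA}\circ S(m_A)$ rather than the equal composite $T(m_A)\circ m_{SA}$, so that naturality of $m$ applied to $a$ matches up cleanly and produces the factor $a'=a\circ m_A$ on the right. There is no conceptual difficulty beyond tracking these coherence and naturality squares.
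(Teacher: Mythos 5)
Your proof is correct and complete. Note that the paper itself offers no proof of this proposition at all: it is stated as a well-known fact about morphisms of monads, so there is no argument in the paper to compare against. Your verification is the standard one --- unit compatibility of $m$ plus the unit axiom of $(A,a)$ for the unit law, and multiplication compatibility of $m$, associativity of $(A,a)$, and the naturality square of $m$ at $a$ for the associativity law, with naturality at $f$ handling the morphism part. One small remark: your closing claim that one \emph{must} use the decomposition $(m\star m)_A = m_{TA}\circ S(m_A)$ is overstated. The other decomposition $T(m_A)\circ m_{SA}$ works just as well: it leads to $a\circ T(a\circ m_A)\circ m_{SA} = a\circ T(a')\circ m_{SA}$, and then naturality of $m$ at $a'\colon SA\to A$ (rather than at $a$) gives $T(a')\circ m_{SA} = m_A\circ S(a')$, collapsing the expression to $a'\circ S(a')$ all the same. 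So the bookkeeping is even more forgiving than you suggest.
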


\noindent The above combined with \Cref{thmmonadmorph} yields the following.

\begin{cor}
	Every $H$-algebra is a $V$-algebra, and therefore also a $P$-algebra, in a canonical way. Concretely, if $(A,a)$ is an $H$-algebra, then $(A, a \circ \supp)$ is a $V$-algebra.
\end{cor}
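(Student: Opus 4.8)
The plan is to obtain this corollary directly from the general proposition stated immediately above, applied to the monad morphism established in \Cref{thmmonadmorph}. Taking $S = V$, $T = H$, and $m = \supp : V \Rightarrow H$, the proposition asserts that every $H$-algebra $(A,a)$ acquires a $V$-algebra structure by precomposition, namely $(A, a \circ \supp)$, where the relevant component is $\supp : VA \to HA$. Since \Cref{thmmonadmorph} already guarantees that $\supp$ is a morphism of monads, nothing further is needed for the $V$-algebra claim: the commutativity of the unit and associativity diagrams for $(A, a \circ \supp)$ is exactly what the cited proposition delivers.

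For the $P$-algebra assertion, I would invoke the inclusion $\iota : P \Rightarrow V$, which is a morphism of monads by construction, the monad structure on $P$ being the restriction of that of $V$ (\Cref{probability}). As the composite of two monad morphisms is again a monad morphism, $\supp \circ \iota : P \Rightarrow H$ is a morphism of monads, and applying the proposition a second time, now with $S = P$, $T = H$, and $m = \supp \circ \iota$, turns every $H$-algebra $(A,a)$ into a $P$-algebra with structure map $a \circ \supp \circ \iota$. Equivalently, one may first produce the $V$-algebra $(A, a \circ \supp)$ and then restrict it along the submonad embedding $\iota$, the route already noted in \Cref{probability}; the two descriptions agree because restriction along $\iota$ is precisely precomposition with the component $\iota : PA \to VA$.

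Since both halves are formal consequences of results proved earlier, there is essentially no obstacle to overcome; the only facts worth recording explicitly are that composition of monad morphisms preserves the morphism property and that $\iota$ qualifies as one. The concrete upshot, as stated, is that the algebra map witnessing the induced $V$-structure on an $H$-algebra $(A,a)$ is the composite $a \circ \supp$.
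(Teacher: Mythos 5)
Your proposal is correct and follows exactly the paper's route: the $V$-algebra claim is the general ``morphism of monads induces a functor between algebra categories'' proposition applied to $\supp : V \Rightarrow H$ from \Cref{thmmonadmorph}, and the $P$-algebra claim follows by restricting along the submonad inclusion $\iota : P \Rightarrow V$, just as the paper notes in \Cref{probability}. Nothing is missing.
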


The algebras of $H$ are the complete topological join-semilattices (\Cref{Halg}). Hence we obtain the following statement.

\begin{cor}
 Every topological complete join-semilattice is a $V$-algebra with structure map $\nu \mapsto \bigjoin \supp(\nu)$.
\end{cor}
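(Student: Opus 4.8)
The plan is to combine Schalk's characterization of $H$-algebras (\Cref{halgebras}) with the transport of algebras along the monad morphism $\supp : V \to H$ (\Cref{thmmonadmorph}), so that nothing beyond unwinding a composite remains. First I would invoke \Cref{halgebras}: a topological complete join-semilattice $A$ is precisely an $H$-algebra whose structure map is the join of closed sets $\bigjoin : HA \to A$. Next I would apply the general principle recalled in the preceding proposition—that a morphism of monads $m : S \Rightarrow T$ turns every $T$-algebra $(A,a)$ into an $S$-algebra $(A, a \circ m)$—to the case $m = \supp$, which is a morphism of monads by \Cref{thmmonadmorph}. This shows that the $H$-algebra $(A, \bigjoin)$ carries a canonical $V$-algebra structure whose structure map is $\bigjoin \circ \supp$. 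Specializing and reading off the composite $VA \xrightarrow{\supp} HA \xrightarrow{\bigjoin} A$ gives $\nu \mapsto \bigjoin \supp(\nu)$, exactly the claimed formula.

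There is essentially no obstacle here: the whole content is already contained in the two cited results, and the transport along a monad morphism is defined precisely by precomposition with the component $\supp : VA \to HA$. The only thing genuinely worth recording is that the structure map supplied by the general construction is \emph{literally} the composite $\bigjoin \circ \supp$, rather than merely isomorphic to it; this is immediate from the definition of the induced algebra. Consequently the proof reduces to a single line citing \Cref{halgebras} and \Cref{thmmonadmorph} and then identifying the composite with the stated map.
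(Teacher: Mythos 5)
Your proof is correct and follows essentially the same route as the paper: the paper likewise combines the general transport-of-algebras proposition (precomposition of a $T$-algebra structure map with a monad morphism) applied to $\supp : V \Rightarrow H$ from \Cref{thmmonadmorph}, together with Schalk's characterization of $H$-algebras as topological complete join-semilattices (\Cref{halgebras}), to read off the structure map as the composite $VA \xrightarrow{\supp} HA \xrightarrow{\bigjoin} A$, i.e.\ $\nu \mapsto \bigjoin \supp(\nu)$. There is nothing to add or correct.
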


A very similar result was obtained independently by Goubault-Larrecq and Jia \cite[Proposition 4.14]{algebras}.

Since every $V$-algebra is also a topological cone (\Cref{Valg}), it follows that every topological complete join-semilattice $A$ is a topological cone. Unfolding the definitions shows that its addition is given by the binary join, and its scalar multiplication is
 $$
 r x  \;=\; \begin{cases}
	 x &\text{if $r > 0$},\\
		 \bot & \text{if $r = 0$},
                               \end{cases}
 $$
 which is indeed jointly continuous. This is the same in spirit as the convex spaces of combinatorial type \cite{fritz}.

\subsection{The support of products and marginals}

The support is not only a morphism of monads, but also respects the strengths (or equivalently the monoidal structures).

\begin{prop}
 Let $X$ and $Y$ be topological spaces. The following diagram commutes. 
 $$
 \begin{tikzcd}
  X \times VY \ar{d}[swap]{\id\times\supp} \ar{r}{s} & V(X\times Y) \ar{d}{\supp} \\
  X\times HY \ar{r}{s} & H(X\times Y)
 \end{tikzcd}
 $$
\end{prop}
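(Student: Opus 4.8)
The plan is to prove the equality of closed sets $\supp(s(x,\nu)) = s(x,\supp(\nu))$ in $H(X\times Y)$ for every $x\in X$ and $\nu\in VY$, which is exactly the commutativity of the square (the left vertical map sends $(x,\nu)$ to $(x,\supp(\nu))$). Since a closed set is determined by the open sets it hits (\Cref{closedduality}), it suffices to compare the two sides against an arbitrary open $W\subseteq X\times Y$ in the coupling notation $\pair{-}{W}$.

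The key observation I would exploit is that both strengths are implemented by the \emph{same} slicing operation. For the valuation strength, $s(x,\nu)=(j_x)_*\nu$, so by the definition of pushforward (\Cref{V_functoriality}) and of slice (\Cref{defslice}) we have $s(x,\nu)(W)=\nu(j_x^{-1}(W))=\nu(W_x)$. For the hyperspace strength, \Cref{defstrengthH} gives directly $\pair{s(x,C)}{W}=\pair{C}{W_x}$. Thus slicing intertwines the two strengths, and the only thing left to verify is that taking supports commutes with slicing, which is immediate from the defining property of the support.

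With this in hand, the computation is a one-line unfolding. On one side, by \Cref{newdefsupport},
$$
\pair{\supp(s(x,\nu))}{W} \;=\; \tv{s(x,\nu)(W)>0} \;=\; \tv{\nu(W_x)>0}.
$$
On the other side, using the $H$-strength formula and then \Cref{newdefsupport} (noting $W_x$ is open in $Y$ since $j_x$ is continuous),
$$
\pair{s(x,\supp(\nu))}{W} \;=\; \pair{\supp(\nu)}{W_x} \;=\; \tv{\nu(W_x)>0}.
$$
The two right-hand sides coincide, so the closed sets agree on every open $W$, hence are equal, and the square commutes.

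I do not anticipate a genuine obstacle: via the common slicing description of both strengths, the statement collapses to the defining property of the support, namely that $\nu$-positivity of an open set is exactly the support hitting it. The only points requiring minor care are to observe that $W_x$ is open in $Y$ (so that pairing $\supp(\nu)$ against it is legitimate) and to justify testing the equality of closed sets on all open $W$; the slicing formulas handle arbitrary $W$ at once, so no reduction is needed, although one could alternatively restrict to rectangles $U\times V$ via \Cref{prodbasisH} and use the simpler expressions $s(x,\nu)(U\times V)=\tv{x\in U}\cdot\nu(V)$ and $\pair{s(x,C)}{U\times V}=\tv{x\in U}\meet\pair{C}{V}$.
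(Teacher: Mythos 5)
Your proof is correct, and it takes a genuinely different route from the paper's. The paper first reduces to basic rectangles: by \Cref{prodbasisH} it suffices to test both closed sets against open sets of the form $U\times V$, and it then computes
$\pair{\supp(s(x,\rho))}{U\times V} = \mathrm{sgn}\big(\1_U(x)\cdot\pair{\rho}{\1_V}\big) = \tv{x\in U}\meet\pair{\supp(\rho)}{V} = \pair{s(x,\supp(\rho))}{U\times V}$,
using the sign-function characterization of the support (\Cref{usingsign}) and the multiplicativity of the sign function. You instead test against an arbitrary open $W\subseteq X\times Y$ and exploit that both strengths are implemented by the same slicing operation, so that both sides collapse to $\tv{\nu(W_x)>0}$ by the definition of the support (\Cref{newdefsupport}); no reduction to rectangles and no sign-function algebra is needed. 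Your route also exposes a structural reason the square commutes that is worth making explicit: since $s(x,\nu)=(j_x)_*\nu$ and $s(x,C)$ can be identified with $(j_x)_\sharp C$, the identity you prove, $\supp\big((j_x)_*\nu\big)=(j_x)_\sharp\,\supp(\nu)$, is precisely the naturality of $\supp$ (\Cref{propnaturality}) instantiated at the continuous map $j_x:Y\to X\times Y$, pointwise in $x$ --- so you could even cite naturality and skip the computation entirely. What the paper's approach buys in exchange is uniformity: the rectangle-testing and sign-function formalism is the same one it uses throughout for product structures (for the strength of $V$, for \Cref{H_commutative}, and for the monad-morphism proof), so its proof reads as one more instance of an established pattern rather than a separate observation.
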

\begin{proof}
	Consider $x\in X$ and $\rho\in VY$. Since the sign function is multiplicative, we have, for every $U\subseteq X$ and $V\subseteq Y$,
	\begin{align*}
		\pair{\supp(s(x,\rho))}{U\times V} \;&=\; \sgn(\pair{s(x,\rho)}{\ngs \circ (U\times V)} \\
		&=\; \sgn(\pair{s(x,\rho)}{(\ngs \circ U) \times (\ngs \circ V)}) \\
		&=\; \sgn\left( \ngs(U(x)) \, \pair{\rho}{\ngs \circ V} \right) \\
		&=\; \sgn(\ngs(U(x))) \, \sgn(\pair{\rho}{\1_V}) \\
		&=\; U(x) \, \pair{\supp(\rho)}{V} \\
		&=\; \pair{s(x,\supp(\rho))}{U\times V} .
	\end{align*}
This is enough by \Cref{H_basis}.
\end{proof}

\begin{cor}
 The support map $\supp : V \to H$ is a monoidal natural transformation. 
\end{cor}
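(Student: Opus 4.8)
The plan is to deduce the corollary from the general principle that, for commutative monads, a morphism of the underlying monads which is compatible with the strengths is the same thing as a morphism of symmetric monoidal monads, and in particular a monoidal natural transformation; this principle is exactly \Cref{eqformorphs}. Almost all of the required work is already done: $\supp$ is a morphism of the underlying monads by \Cref{thmmonadmorph}, and the preceding proposition establishes precisely the compatibility of $\supp$ with the strength $s$. So the task reduces to recognizing that these two facts are exactly the hypotheses that \Cref{eqformorphs} needs.

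First I would note that, by the symmetry of the whole construction (the costrengths $t$ of $V$ and $H$ being obtained from the respective strengths $s$ by swapping the two factors), the analogue of the preceding proposition holds with $s$ replaced by $t$, verified by the same computation on product functions. With the strength and costrength compatibilities in hand, together with the monad-morphism property from \Cref{thmmonadmorph}, \Cref{eqformorphs} immediately yields that $\supp$ is a morphism of symmetric monoidal monads, hence a monoidal natural transformation.

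Alternatively, one can verify the lax monoidal coherences directly and make the statement self-contained. The nullary coherence is the unit diagram already proved in \Cref{propmmorph} (the support of the Dirac valuation on the one-point space is the whole space). For the binary coherence one checks $\supp\bigl(\nabla(\nu,\rho)\bigr) = \nabla\bigl(\supp(\nu),\supp(\rho)\bigr)$, where on the right $\nabla$ is the product of closed sets. By \Cref{prodbasisH} it suffices to test both closed sets against product opens $U\times V$. Using the product evaluation $\pair{\nabla(\nu,\rho)}{\1_U\cdot\1_V} = \pair{\nu}{\1_U}\cdot\pair{\rho}{\1_V}$ and the multiplicativity of the sign function, one obtains
$$
\pair{\supp(\nabla(\nu,\rho))}{U\times V} \;=\; \sgn\bigl(\pair{\nu}{\1_U}\cdot\pair{\rho}{\1_V}\bigr) \;=\; \sgn(\pair{\nu}{\1_U})\meet\sgn(\pair{\rho}{\1_V}) \;=\; \pair{\supp(\nu)}{U}\meet\pair{\supp(\rho)}{V},
$$
which is exactly $\pair{\supp(\nu)\times\supp(\rho)}{U\times V}$. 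This is the same manipulation as in the preceding proposition, the key fact being that $\sgn(ab)=\sgn(a)\meet\sgn(b)$ on $[0,\infty]$ (with the convention $0\cdot\infty=0$).

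The main obstacle is conceptual rather than computational: the genuine content lies in the preceding proposition, and the only thing to get right here is that strength-compatibility of a commutative-monad morphism already forces full monoidal naturality, which is what \Cref{eqformorphs} packages for us. In the direct route the one point requiring care is the appeal to \Cref{prodbasisH} (respectively \Cref{prodbasisV}) to reduce testing to product opens, together with the multiplicativity of $\sgn$, which is precisely what transports the independence/product structure of valuations through the support map.
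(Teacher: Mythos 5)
Your primary argument is exactly the paper's proof: the paper's justification for this corollary is simply an appeal to \Cref{eqformorphs}, whose hypotheses are supplied by \Cref{thmmonadmorph} (the support is a morphism of monads) and by the proposition immediately preceding the corollary (compatibility with the strength), just as you describe. Your alternative direct verification of the nullary and binary coherences is also sound --- it in effect proves directly the product-of-supports formula $\supp(\nabla(\nu,\rho)) = \supp(\nu)\times\supp(\rho)$ that the paper instead derives as a consequence of the corollary --- but it is not needed, and neither is your remark about the costrength, since \Cref{eqformorphs} only requires compatibility with the strength $s$.
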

\begin{proof}
See \Cref{eqformorphs}.
\end{proof}

\noindent In particular, the following diagram commutes. 
 $$
 \begin{tikzcd}
  VX \times VY \ar{d}[swap]{\supp\times\supp} \ar{r}{\nabla} & V(X\times Y) \ar{d}{\supp} \\
  HX\times HY \ar{r}{\nabla} & H(X\times Y)
 \end{tikzcd}
 $$
In other words, the support of the product distribution is the product of the supports of the factors. For the unitors, we have the trivial statement that the support of the valuation in the image of $u:1\to V1$ is the unique point of $1$.

By the universal property of the Cartesian product, we know that the support is also an opmonoidal natural transformation. This means that the supports of the marginals are the projections of the support. 
Indeed, the product projection $\pi_1:X\times Y\to X$ makes the following diagram commute,
$$
\begin{tikzcd}[column sep=small, row sep=small]
 & V(X\times Y) \ar{dd}[near end]{(\pi_1)_*} \ar{dl} \ar{rrr}{\supp} &&& H(X\times Y) \ar{dl} \ar{dd}{(\pi_1)_\sharp} \\
 VX \times VY \ar{dr}[swap]{\pi_1} \ar[crossing over]{rrr}[near end]{\supp\times\supp} &&& HX \times HY \ar{dr}{\pi_1} \\
 & VX \ar{rrr}[swap]{\supp} &&& HX
\end{tikzcd}
$$
and the same can be said about $\pi_2:X\times Y\to Y$. The bottom parallelogram of this diagram gives a map $VX\times VY\to HX$, and similarly we obtain a map $VX\times VY\to HY$. By the universal property of the product, there is an induced map $VX\times VY\to HX\times HY$ commuting with the supports, and this is the opmonoidal structure. The commutativity of the square face of the diagram implies that the supports of the marginals are the projections of the support.

\appendix

\section{Top as a 2-category}
\label{2cat}

Here we recall some background material on the specialization preorder, which makes $\cat{Top}$ into a category enriched in preordered sets, and therefore into a 2-category. Every topological space is canonically equipped with a preorder, its specialization preorder. Since a preorder is a category, this equips $\cat{Top}$ with a 2-categorical structure. This is not the higher categorical structure that arises from homotopies of maps. Rather, it is closely related to the usual 2-categorical structure on the category of locales.

\begin{deph}[Specialization preorder]
	Let $X$ be a topological space. Given $x,y\in X$, we define the specialization preorder on $X$ by setting $x\le y$ if and only if
	\[
		x \in U \quad\Longrightarrow\quad y \in U
	\]
	for every $U \in \op(X)$.
\end{deph}

Equivalently, $x\le y$ if and only if $x \in \cl(\{y\})$, or if and only if $\cl(\{x\}) \subseteq \cl(\{y\})$. Yet equivalently, any net or filter which converges to $y$ also converges to $x$.

It is obvious from the definition that the specialization preorder is indeed a preorder (it is reflexive and transitive). A space is $T_0$ if and only if its specialization preorder is a partial order (it is antisymmetric). A space is $T_1$ if and only if its specialization preorder is the discrete relation. Every preorder on any set arises as the specialization preorder of some topology, for example its Alexandrov topology.

We write $x \sim y$ if $x \leq y$ as well as $y \leq x$. Any two equivalent points $x \sim y$ have the same neighborhood filter, and any net or filter tending to one also tends to the other. Since two points of a space $X$ are equivalent in the specialization preorder if and only if they have the same open neighborhoods, a space is $T_0$ if and only if $x \sim y$ implies $x=y$. The \emph{Kolmogorov quotient} is the quotient space $X/\sim$, which is the initial $T_0$ space equipped with a continuous map from $X$. Hence the category of $T_0$ spaces is a reflective subcategory of $\cat{Top}$.

\begin{deph}
	\label{2cell_def}
	Let $f,g: X \to Y$ be continuous maps. We say that there is a 2-cell $f\le g$ if and only if, for every $x\in X$, we have $f(x)\le g(x)$ in the specialization preorder of $Y$.
\end{deph}

Every continuous map is monotone for the specialization preorder. Hence, equipped with these 2-cells, $\cat{Top}$ is a strict 2-category.

\begin{lemma}\label{ordopen}
 Let $f,g:X\to Y$ be continuous maps. Then $f\le g$ if and only if $f^{-1}(U)\subseteq g^{-1}(U)$ for every open set $U\subseteq Y$.
\end{lemma}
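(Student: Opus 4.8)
The plan is to prove both directions by unfolding the definition of the specialization preorder in terms of open sets. Recall from the discussion preceding the lemma that, for points $y', y \in Y$, we have $y' \le y$ if and only if every open set containing $y'$ also contains $y$ (this is the reformulation of $y' \in \cl(\{y\})$). Applying this pointwise, $f \le g$ means precisely that for every $x \in X$ and every open $U \subseteq Y$, the implication $f(x) \in U \Rightarrow g(x) \in U$ holds.

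First I would treat the forward direction. Assume $f \le g$ and fix an open set $U \subseteq Y$. Given any $x \in f^{-1}(U)$, we have $f(x) \in U$, and since $f(x) \le g(x)$ in the specialization preorder, the open set $U$ containing $f(x)$ must also contain $g(x)$. Hence $g(x) \in U$, that is $x \in g^{-1}(U)$. As $x$ was arbitrary, this yields $f^{-1}(U) \subseteq g^{-1}(U)$, and as $U$ was arbitrary the containment holds for all open $U$.

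For the converse, assume $f^{-1}(U) \subseteq g^{-1}(U)$ for every open $U \subseteq Y$, and fix $x \in X$. To show $f(x) \le g(x)$, let $U$ be any open set with $f(x) \in U$. Then $x \in f^{-1}(U)$, so by hypothesis $x \in g^{-1}(U)$, giving $g(x) \in U$. Thus every open neighborhood of $f(x)$ contains $g(x)$, which is exactly the statement $f(x) \le g(x)$. Since $x$ was arbitrary, $f \le g$.

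The two directions are in fact the same biconditional read with the quantifiers over $x$ and over $U$ commuted, so there is no genuine obstacle here: the entire content is the observation that $f^{-1}(U) \subseteq g^{-1}(U)$ decodes to ``$f(x) \in U \Rightarrow g(x) \in U$'' and that this is the pointwise defining condition of the specialization preorder. The only care required is keeping the direction of the implication straight, since $x \le y$ corresponds to neighborhoods of $x$ being neighborhoods of $y$, and not the reverse.
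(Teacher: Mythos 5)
Your proof is correct and follows essentially the same route as the paper's: both directions unfold the pointwise definition of the 2-cell $f \le g$ via the reformulation that $f(x) \le g(x)$ means every open set containing $f(x)$ also contains $g(x)$. You also correctly keep the direction of the specialization preorder straight ($x \le y$ iff $x \in \cl(\{y\})$ iff neighborhoods of $x$ contain $y$), which is the only subtle point.
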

\begin{proof}
Assuming $f\le g$, we prove the claim by showing that every $x\in f^{-1}(U)$ is also in $g^{-1}(U)$. Since $x\in U$ and $f(x) \le g(x)$, we indeed have $g(x) \in U$. 

Suppose that $f^{-1}(U)\subseteq g^{-1}(U)$ for all open sets $U \subseteq Y$. Then, for every $x\in X$ and every open $U \ni f(x)$, we have $x\in f^{-1}(U)\subseteq g^{-1}(U)$, which implies that $g(x) \in U$. We have shown that every open neighborhood of $f(x)$ is an open neighborhood of $g(x)$. 
\end{proof}

\section{Topology of mapping spaces}
\label{functop}

Here we briefly discuss the issue of equipping the set of maps between topological spaces with a suitable topology. This is necessary to understand the precise sense in which our monads $H$ and $V$ from \Cref{hyperspace,sec_valuations} are double dualization monads, and in particular to answer whether these monads are covered by the general framework of Lucyshyn-Wright for double dualization monads~\cite{lucyshyn-wright}. This framework is designed to apply to Cartesian closed categories, which $\cat{Top}$ is well-known not to be. Nevertheless, since we only dualize with respect to particular spaces $S$ and $\Rplusext$, one may hope that it could be enough if exponential objects of the form $S^X$ and $\Rplusext^X$ existed in $\cat{Top}$.

As we will recall here, this is not the case. Thus, although our monads unmistakably have the flavor of double dualization monads, we are not aware of any existing categorical framework for double dualization monads which would accommodate them, and the development of such a framework seems like an interesting problem for future work.

To begin the discussion of exponential objects in $\cat{Top}$, recall that a subset $V \subseteq U$ of a topological space is \emph{relatively compact} in $U$ if and only if every open cover of $U$ admits a finite subcover of $V$, or, equivalently, if and only if $V$ is way below $U$ with respect to the inclusion order~\cite[p.~50]{continuous}. For this reason, we also denote relative compactness by $V \ll U$.

\begin{deph}[Core-compact spaces~\cite{HL}]
 A topological space $X$ is core-compact if, for every $x \in X$ and every open neighborhood $U \ni x$, there exists an open neighborhood $V$ such that $x \in V \subseteq U$ and $V \ll U$.
\end{deph}

It follows that a space is core-compact if and only if its lattice of open sets is a continuous lattice~\cite{isbell}. If $X$ is sober, then it is core-compact if and only if it is locally compact (see Theorem~V-5.6 in~\cite{continuous}).

\begin{thm}[See \cite{isbell,EH} and Proposition II-4.6 in \cite{continuous}]
	Let $X$ be a topological space. The functor $- \times X : \cat{Top} \to \cat{Top}$ has a right adjoint $(-)^X : \cat{Top} \to \cat{Top}$ if and only if $X$ is core-compact.
\end{thm}

Whenever $X$ is not core-compact, the exponential space $S^X$ does not exist, and neither does $\Rplusext^X$. This is well-known and may be shown by translating a theorem of Niefield \cite[Theorem 2.3]{niefield} into our setting. (Niefield's ``cartesianness'' corresponds to our ``exponentiability'' and we take her $T$ to be $1$ so that $\cat{Top}/T$ becomes $\cat{Top}$.) The equivalence of conditions (b), (c) and (d) in Niefield's theorem then reads as follows.

\begin{thm}[Niefield]
\label{niefield}
Let $X$ be a topological space. The following three conditions are equivalent.
 \begin{enumerate}
  \item $X$ is exponentiable.
  \item The exponential object $S^X$ exists in $\cat{Top}$.
  \item\label{filtercond} Given any open set $U\subseteq X$ and any point $x\in U$, there exists a Scott-open $\mathcal{V}\subseteq \op(X)$ such that $U\in\mathcal{V}$ and $\bigcap\mathcal{V}$ is a neighborhood of $x$.
 \end{enumerate}
\end{thm}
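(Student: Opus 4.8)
The plan is to prove the cycle (a) $\Rightarrow$ (b) $\Rightarrow$ (c) $\Rightarrow$ (a), using the Sierpiński space $S$ as the single ``test'' codomain through which exponentiability can be detected. The implication (a) $\Rightarrow$ (b) is immediate: if $X$ is exponentiable then the right adjoint $(-)^X$ is defined on all of $\cat{Top}$, so in particular $S^X$ exists. The real content is that the much weaker hypothesis (b), existence of the \emph{single} exponential $S^X$, already forces (c) and hence, via core-compactness, full exponentiability.

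For (b) $\Rightarrow$ (c) I would work with the underlying set of $S^X$, which is $\cat{Top}(X,S)\cong\op(X)$, and exploit that the exponential topology is simultaneously \emph{admissible} (the evaluation $\mathrm{ev}\colon S^X\times X\to S$ is continuous) and \emph{splitting} (transposes of continuous maps $Z\times X\to S$ are again continuous). Admissibility is used as follows: given open $U\subseteq X$ and $x\in U$, continuity of $\mathrm{ev}$ at the point $(U,x)\in\mathrm{ev}^{-1}(\{1\})$ produces a basic neighbourhood $\mathcal V\times W$, with $\mathcal V$ open in $S^X$ and $W\ni x$ open in $X$, contained in $\mathrm{ev}^{-1}(\{1\})$; unwinding this says precisely that $W\subseteq\bigcap\mathcal V$, so $\bigcap\mathcal V$ is a neighbourhood of $x$, and of course $U\in\mathcal V$. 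It then remains to upgrade $\mathcal V$ to a \emph{Scott} open set, and this is where the splitting property enters. I would show that every open set of $S^X$ is Scott open by checking that for any directed family $(U_\lambda)$ in $\op(X)$ the net $\lambda\mapsto U_\lambda$ converges in $S^X$ to its union $\bigcup_\lambda U_\lambda$: this follows by transposing, along the splitting property, the evidently continuous map $\Lambda^{+}\times X\to S$, $(\lambda,x)\mapsto\tv{x\in U_\lambda}$, where $\Lambda^{+}$ is the directed family completed by its join and given the Scott topology. Since open sets of $S^X$ are also upper sets for the specialization order of $S^X$, which is the inclusion order, they are Scott open; in particular the $\mathcal V$ above is Scott open, and (c) holds.

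For (c) $\Rightarrow$ (a) I would first deduce core-compactness from the filter condition and then invoke the characterization recalled above, that the core-compact spaces are exactly the exponentiable ones. Concretely, given $x\in U$, the filter condition supplies a Scott open $\mathcal V\ni U$ with $\bigcap\mathcal V$ a neighbourhood of $x$; pick an open $V$ with $x\in V\subseteq\bigcap\mathcal V$. Then $V\subseteq U$, since $U\in\mathcal V$ forces $\bigcap\mathcal V\subseteq U$, and moreover $V\ll U$: for any directed family with $U\subseteq\bigcup_\lambda U_\lambda$, the union lies in the upper set $\mathcal V$, hence by Scott openness some $U_\lambda\in\mathcal V$, whence $V\subseteq\bigcap\mathcal V\subseteq U_\lambda$. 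This exhibits $X$ as core-compact and closes the cycle.

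The main obstacle is the half of (b) $\Rightarrow$ (c) that converts a mere open set of $S^X$ into a Scott open one. This is exactly the step that makes ``$S^X$ exists'' as strong as ``$X$ is core-compact,'' and it must be extracted from the splitting (properness) half of the exponential's universal property rather than from continuity of evaluation alone.
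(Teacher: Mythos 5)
Your proof attempt should first be measured against what the paper actually does here: the paper does \emph{not} prove this theorem, it quotes it from Niefield's article, and its own contribution is only \Cref{corecompacteq}, identifying condition \ref{filtercond} with core-compactness. So you are supplying an argument the paper delegates to the literature. Within your cycle, (a) $\Rightarrow$ (b) is trivially correct; (c) $\Rightarrow$ (a) is exactly the second half of the paper's proof of \Cref{corecompacteq} followed by the cited equivalence of core-compactness with exponentiability from \Cref{functop}, and is fine (it unavoidably rests on that cited theorem, just as the paper does); and in (b) $\Rightarrow$ (c) the admissibility half is correct: continuity of $\mathrm{ev}$ at $(U,x)$ gives a basic neighborhood $\mathcal{V} \times W \subseteq \mathrm{ev}^{-1}(\{1\})$, i.e.\ $W \subseteq \bigcap \mathcal{V}$, so $\bigcap\mathcal{V}$ is a neighborhood of $x$. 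Thus the load-bearing new step is your claim that every open of $S^X$ is Scott open.

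That step contains a genuine gap: the map $F : \Lambda^{+} \times X \to S$, $(\lambda,x) \mapsto \tv{x \in U_\lambda}$, is \emph{not} continuous in general when $\Lambda^{+}$ carries the Scott topology, so it is not ``evidently continuous'' and the transposition cannot be performed as written. Concretely, take $X = \R$ (note $S^{\R}$ exists, so this is a legitimate instance) and let $\Lambda$ be the directed family of all finite unions of bounded open intervals, with $U_\top = \R$. The subfamily $D$ of members contained in $(0,\infty)$ is directed and has no upper bound in $\Lambda$, so its join in $\Lambda^{+}$ is $\top$; Scott openness then forces every Scott-open $\mathcal{W} \ni \top$ to contain some $d \subseteq (0,\infty)$. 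Continuity of $F$ at $(\top,-1)$ would require opens $\mathcal{W} \ni \top$ and $O \ni -1$ with $O \subseteq V$ for every $V \in \mathcal{W}$, in particular $O \subseteq d \subseteq (0,\infty)$, which is impossible. The repair is to give $\Lambda^{+}$ the coarser topology in which points of $\Lambda$ are isolated and the neighborhoods of $\top$ are the tails $\{\mu : \mu \geq \lambda\} \cup \{\top\}$: then $F$ is continuous, the net $(\lambda)_{\lambda\in\Lambda}$ still converges to $\top$, and transposing along the splitting property yields $U_\lambda \to U_\top$ in $S^X$, which is what you need. (The repaired statement is the standard fact that any splitting topology is coarser than the Isbell topology, which for codomain $S$ is the Scott topology on $\op(X)$; cf.~\cite{EH}.) A second, smaller omission: your assertion that the specialization preorder of $S^X$ is the inclusion order also needs the splitting property in the direction you use (inclusion implies specialization); for $U \subseteq U'$, transpose the continuous map $S \times X \to S$ given by $(s,x) \mapsto \tv{x\in U} \vee \left( s \wedge \tv{x \in U'} \right)$. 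Admissibility alone only gives the converse inclusion of the two orders.
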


\begin{prop}\label{corecompacteq}
 Condition \ref{filtercond} holds if and only if $X$ is core-compact.
\end{prop}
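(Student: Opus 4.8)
The plan is to recognize that both core-compactness and the filter condition of \Cref{niefield} are just two encodings of the way-below relation $\ll$ on the frame $\op(X)$, and to pass between them through the standard dictionary relating Scott-open subsets of $\op(X)$ to $\ll$. Throughout I will lean on Isbell's theorem, quoted above, that $X$ is core-compact if and only if $\op(X)$ is a continuous lattice, together with the fact that in a continuous lattice the set of elements way above a fixed element is Scott-open (which is precisely the interpolation property in disguise).

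For the direction core-compact $\Rightarrow$ filter condition, I would start from an open set $U$ and a point $x \in U$. Core-compactness supplies an open $V$ with $x \in V \subseteq U$ and $V \ll U$. I then set $\mathcal{V} := \{ W \in \op(X) : V \ll W \}$. Since $\op(X)$ is a continuous lattice, $\mathcal{V}$ is Scott-open: it is upward closed because $\ll$ is monotone in its upper argument, and it is inaccessible by directed unions by interpolation (if $V \ll \bigcup_i W_i$, then $V \ll W' \ll \bigcup_i W_i$ for some $W'$, whence $W' \subseteq W_i$ and hence $V \ll W_i$ for some $i$). By construction $U \in \mathcal{V}$, and since every member of $\mathcal{V}$ contains $V$, we get $V \subseteq \bigcap \mathcal{V}$, so $\bigcap \mathcal{V}$ is a neighborhood of $x$. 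This is exactly the filter condition.

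For the converse I would take an open $U$ and a point $x \in U$ and feed them to the filter condition, obtaining a Scott-open $\mathcal{V} \subseteq \op(X)$ with $U \in \mathcal{V}$ and $\bigcap \mathcal{V}$ a neighborhood of $x$. Choosing an open $V$ with $x \in V \subseteq \bigcap \mathcal{V}$, I claim $V \ll U$; together with $x \in V \subseteq U$ (note $V \subseteq \bigcap\mathcal{V} \subseteq U$ since $U \in \mathcal{V}$) this yields core-compactness. To verify $V \ll U$, suppose $U \subseteq \bigcup_i W_i$ for a directed family of open sets. Then $\bigcup_i W_i \in \mathcal{V}$ because $\mathcal{V}$ is an upper set containing $U$, and Scott-openness forces $W_i \in \mathcal{V}$ for some $i$, whence $V \subseteq \bigcap \mathcal{V} \subseteq W_i$. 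This is precisely $V \ll U$.

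The only genuinely nontrivial point, and the step I expect to be the main obstacle, is the Scott-openness of $\mathcal{V} = \{W : V \ll W\}$ in the first direction, since this is where continuity of $\op(X)$ (equivalently, the interpolation property of $\ll$) is really used; the converse direction, by contrast, requires no such input and is a direct unwinding of the definitions.
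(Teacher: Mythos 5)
Your proof is correct and takes essentially the same route as the paper: both directions hinge on the principal upset $\mathcal{V} = \{W \in \op(X) : V \ll W\}$, whose Scott-openness is supplied by continuity of $\op(X)$ (the paper cites Proposition~II.1.6 of \cite{continuous} for this, whereas you inline the interpolation argument), and the converse direction is the same unwinding of Scott-openness, differing only in that you use the directed-family formulation of $\ll$ where the paper uses the open-cover formulation.
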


\begin{proof}[Proof of \ref{corecompacteq}]
	Suppose that $X$ satisfies \ref{filtercond}. Then, given $x \in X$ and an open neighborhood $U \ni x$, we have a Scott-open $\mathcal{V} \subseteq \op(X)$ such that $U\in \mathcal{V}$ and $\bigcap \mathcal{V}$ is a neighborhood of $x$. The latter means that there exists an open set $V$ such that $x\in V \subseteq \bigcap \mathcal{V}$. We claim that $V\ll U$. Let $\{U_\alpha\}_{\alpha\in A}$ be an open cover of $U$. Since $U\in\mathcal{V}$, Scott-openness implies that there is a finite subfamily $\left( U_{\alpha_i} \right)_{i=1}^n$ such that $\bigcup_{i=1}^n U_{\alpha_i}$ is a member of $\mathcal{V}$, and therefore contains $\bigcap \mathcal{V}$. But then this finite subfamily also covers $V$.
 
 Conversely, let $X$ be core-compact. For an open set $U\subseteq X$ and $x\in U$, there exists an open $V \subseteq X$ such that $x\in V \ll U$. Define the set 
 $$
 \mathcal{V} \; \coloneqq \; \{U'\in \op (X) : V \ll U' \} .
 $$
 We have $U\in \mathcal{V}$ and $V\subseteq \bigcap\mathcal{V}$. The set $\mathcal{V} \subseteq \op(X)$ is Scott-open by the assumption that $\op(X)$ is a continuous lattice, which implies that principal upsets with respect to $\ll$ are Scott-open (Proposition~II.1.6 in~\cite{continuous}).
\end{proof}

\section{Commutative and symmetric monoidal monads}
\label{comm_monads}

We recall the notion of monad and the equivalence between a commutative monad and a symmetric monoidal monad. We assume familiarity with symmetric monoidal categories, lax symmetric monoidal functors, and monoidal natural transformations. All our monoidal functors will be lax monoidal. The monoidal category of primary interest in the main text is $\cat{Top}$ with its Cartesian product structure. We start with the definition of monads and their morphisms for convenient reference.

\begin{deph}[Category of monads]
	Let $\cat{C}$ be a category.
	\begin{enumerate}
		\item A functor $T : \cat{C} \to \cat{C}$ is a monad if it is equipped with natural transformations $\eta : \id \Rightarrow T$ and $\mu : TT \Rightarrow T$ such that the following three diagrams commute for every $X \in \cat{C}$.
 \begin{equation}
  \begin{tikzcd}
   TX \ar{r}{\eta} \idar{dr} & TTX \ar{d}{\mu}  \\
   & TX
  \end{tikzcd}
  \qquad
  \begin{tikzcd}
   TX \ar{r}{T\eta} \idar{dr} & TTX \ar{d}{\mu} \\
   & TX
  \end{tikzcd}
  \qquad
  \begin{tikzcd}
   TTTX \ar{r}{T\mu} \ar{d}{\mu} & TTX \ar{d}{\mu} \\
   TTX \ar{r}{\mu} & TX
  \end{tikzcd}
 \end{equation}
 		\item If $(T,\mu,\nu)$ and $(T',\mu',\nu')$ are monads, then a morphism of monads is a natural transformation $\alpha : T \to T'$ such that the following two diagrams commute for every $X \in \cat{C}$.
 \begin{equation}
  \begin{tikzcd}
    & TX \ar{dd}{\alpha} \\
   X \ar{ur}{\eta'} \ar[swap]{dr}{\eta} \\
    & T'X 
  \end{tikzcd}
  \qquad\qquad
  \begin{tikzcd}
    TTX \ar{d}{\alpha} \ar{r}{\mu} & TX \ar{dd}{\alpha} \\
    T'TX \ar{d}{\alpha} \\
    T'T'X \ar{r}{\mu'} & T'X 
  \end{tikzcd}
 \end{equation}
 \end{enumerate}
\end{deph}

An introduction to monads from the probabilistic perspective is given by Perrone \cite[Chapter~1]{thesis}. We use the term \emph{probability monad} to loosely refer to monads $T$ where for every object $X \in \cat{C}$, the object $TX$ is the object of probability measures (of a given kind) on $X$. Then $T(X\otimes Y)$ is the object of \emph{joint} probability distributions on $X$ and $Y$, where the tensor product $\otimes$ is often the Cartesian product, but not always~\cite{ours_bimonoidal}. The multiplication $\mu : TTX \to TX$ averages a probability measure on probability measures to the probability measure representing the expectation value or barycenter of the measure on measures. All three monads considered in this paper are variations on this theme.

An important structure in probability theory is the formation of product distributions. In the probability monad formalism, this is encoded as a natural transformation $\nabla : TX \otimes TY \to T(X \otimes Y)$ which makes $T$ into a lax symmetric monoidal functor that interacts nicely with the monad structure.

\begin{deph}[Symmetric monoidal monad]
    Let $\cat{C}$ be a symmetric monoidal category.
	Suppose that a functor $T : \cat{C} \to \cat{C}$ carries both the structure of a monad and of a symmetric monoidal functor with structure maps $\nabla : T(-) \otimes T(-) \to T(- \otimes -)$ and $u : 1 \to T1$. Then $T$ is a symmetric monoidal monad if $u = \eta$ as morphisms $1 \to T1$ and the following two diagrams commute.
	\[
		\begin{tikzcd}
			& X \times Y \ar{ld}[swap]{\eta \otimes \eta} \ar{rd}{\eta} \\
			 TX \times TY \ar{rr}{\nabla} & & T(X\times Y)
		\end{tikzcd}
	\]\medskip
	\[
	\begin{tikzcd}
			 TTX \times TTY \ar{d}{\mu\times\mu} \ar{r}{\nabla} & T(TX\times TY) \ar{r}{T\nabla} & TT(X\times Y) \ar{d}{\mu} \\
			 TX\times TX \ar{rr}{\nabla} && T(X\times Y) 
		\end{tikzcd}
	\]
\end{deph}

It is well-known that, given a monad $T$, there is a bijective correspondence between symmetric monoidal structures on $T$ and \emph{strengths} on $T$ which make $T$ into a \emph{commutative monad}~\cite{kock}. We recall the relevant definitions, still on a symmetric monoidal category $\cat{C}$.

\begin{deph}[Strength]
    Let $\cat{C}$ be a monoidal category and $T$ a monad on $\cat{C}$.
	A strength on a monad $T$ is a family of maps $X \otimes TY \to T(X \otimes Y)$, natural in $X$ and $Y$, such that the following four diagrams commute for all $X,Y\in\cat{C}$, where the unnamed isomorphisms are the monoidal structure isomorphisms.
	\[
		\begin{tikzcd}
			1 \otimes TX \ar{r}{s} \ar{dr}[swap]{\cong} & T(1\otimes X) \ar{d}{\cong} \\
			 & TX
		\end{tikzcd}
	\]\medskip
	\[
		\begin{tikzcd}
			 (X\otimes Y) \otimes TZ \ar{rr}{s} \ar{d}{\cong} && T((X\otimes Y)\otimes Z) \ar{d}{\cong} \\
			 X\otimes(Y\otimes TZ) \ar{r}{\id\otimes s} & X\otimes T(Y\otimes Z) \ar{r}{s} & T(X\otimes(Y\otimes Z))
		\end{tikzcd}
	\]\medskip
	\[
		\begin{tikzcd}
			 X\otimes Y \ar{r}{\id\otimes\eta} \ar{dr}[swap]{\eta} & X\otimes TY \ar{d}{s} \\
			 & T(X\otimes Y)
		\end{tikzcd}
	\]\medskip
	\[
		\begin{tikzcd}
			 X\otimes TTY \ar{r}{s} \ar{d}{\id\otimes\mu} & T(X\otimes TY) \ar{r}{Ts} & TT(X\otimes Y) \ar{d}{\mu} \\
			 X\otimes TY \ar{rr}{s} && T(X\otimes Y)
		\end{tikzcd}
	\]
\end{deph}

Similarly, a \emph{costrength} is a natural transformation with components $t : TX \otimes Y \to T(X \otimes Y)$ satisfying the analogous equations. Since $\cat{C}$ is symmetric monoidal, every strength induces a costrength and vice versa.
A \emph{strong monad} is a monad equipped with a strength.

\begin{deph}[Commutative monad]
    	Let $\cat{C}$ be a symmetric monoidal category, and
	$T$ a strong monad on $\cat{C}$ with strength $s$ and the costrength $t$ induced from the braiding. Then $T$ is a commutative monad if the following diagram commutes for all $X,Y \in \cat{C}$.
	\begin{equation}
		\label{commutative_diagram}
 		\begin{tikzcd}
			TX \otimes TY \ar{d}{t} \ar{r}{s} & T(TX\otimes Y) \ar{r}{Tt} & TT(X\otimes Y) \ar{d}{\mu} \\
			T(X\otimes TY) \ar{r}{Ts} & TT(X\otimes Y) \ar{r}{\mu} & T(X\otimes Y)
		\end{tikzcd}
	\end{equation}
\end{deph}

The bijection between a symmetric monoidal structure on a monad $T$ and a commutative structure on $T$ can be obtained by explicit construction of each piece of structure in terms of the other. In one direction, we start with $\nabla : TX \otimes TY \to T(X \otimes Y)$ and obtain a strength as the composite
\[
	\begin{tikzcd}
		X \otimes TY \ar{r}{\eta\otimes\id} & TX \otimes TY \ar{r}{\nabla} & T(X \otimes Y).
	\end{tikzcd}
\]
In the other direction, we start with $s : X \otimes TY \to T(X \otimes Y)$ and obtain a symmetric monoidal structure as the diagonal of~\Cref{commutative_diagram}.

The following observation is fairly elementary, but seems to be hard to find in the literature. It says that the correspondence between commutative strong monads and symmetric monoidal monads is functorial.

\begin{prop}\label{eqformorphs}
	If $S$ and $T$ are commutative monads, then the following two properties of a morphism of monads $\alpha : S \Rightarrow T$ are equivalent.
	\begin{enumerate}
		\item\label{monoidal_trafo}The morphism $\alpha$ is a monoidal natural transformation, that is the following two diagrams commute for all $X,Y\in\cat{C}$.
		\begin{equation}\label{monoidalmorph}
		 \begin{tikzcd}[row sep=small]
		  & S1 \ar{dd}{\alpha} \\
		  1 \ar{ur}{\eta} \ar[swap]{dr}{\eta} \\
		  & T1
		 \end{tikzcd}
		 \qquad
		 \begin{tikzcd}[row sep=small]
		  SX\otimes SY \ar{dd}{\alpha\otimes\alpha} \ar{r}{\nabla} & S(X\otimes Y) \ar{dd}{\alpha} \\ \\
		  TX\otimes TX \ar{r}{\nabla} & T(X\otimes Y)
		 \end{tikzcd}
		\end{equation}.
		\item\label{strong_trafo}The morphism $\alpha$ preserves the strengths in the sense that the following diagram commutes for all $X,Y \in \cat{C}$.
			\begin{equation}\label{strongmorph}
				\begin{tikzcd}
					X \otimes S Y \ar{r}{s} \ar{d}{\id\otimes\alpha}	& S (X \otimes Y) \ar{d}{\alpha}	\\
					X \otimes TY \ar{r}{s'}				& T(X \otimes Y)
				\end{tikzcd}
			\end{equation}
	\end{enumerate}
\end{prop}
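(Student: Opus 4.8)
The plan is to use the explicit formulas relating the monoidal structure $\nabla$ and the strength $s$ that are recalled in the bijection just before the statement, and to reduce each of the two implications to a diagram chase assembled from three elementary ingredients: the unit and multiplication axioms for the monad morphism $\alpha$, the naturality of $\alpha$, and the naturality of the strengths (in each variable) together with that of the braiding.

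For \ref{monoidal_trafo} $\Rightarrow$ \ref{strong_trafo} I would write each strength as the composite of $\eta \otimes \id$ with $\nabla$, so that $\alpha \circ s^S = \alpha \circ \nabla^S \circ (\eta^S \otimes \id)$. Rewriting $\alpha \circ \nabla^S$ by the multiplicativity square of \Cref{monoidalmorph} and using $\alpha \circ \eta^S = \eta^T$ (the unit axiom for a monad morphism) collapses the right-hand side to $\nabla^T \circ (\eta^T \otimes \alpha) = s^T \circ (\id \otimes \alpha)$, which is exactly \Cref{strongmorph}. This direction is essentially a one-line substitution.

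For the converse \ref{strong_trafo} $\Rightarrow$ \ref{monoidal_trafo}, the unit square of \Cref{monoidalmorph} is once more just the monad-morphism unit axiom (using $u = \eta$), so the real content is the $\nabla$-square. Here I would use the other half of the bijection, namely that $\nabla$ is the diagonal of \Cref{commutative_diagram}, giving $\nabla^S = \mu^S \circ S(t^S) \circ s^S$ and similarly for $T$, where $t$ denotes the induced costrength. Starting from $\alpha \circ \nabla^S$, the chase would proceed by replacing $\alpha \circ \mu^S$ via the monad-morphism multiplication axiom; sliding the resulting copies of $\alpha$ inward past $S(t^S)$ and $s^S$ using naturality of $\alpha$; and finally applying the strength-preservation hypothesis \Cref{strongmorph} together with the naturality of $s^T$ in its first variable to convert $s^S$ followed by the $\alpha$'s into $s^T \circ (\alpha \otimes \alpha)$, the interchange law $(\alpha \otimes \id)(\id \otimes \alpha) = \alpha \otimes \alpha$ doing the last bit of bookkeeping. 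Reassembling then yields $\alpha \circ \nabla^S = \mu^T \circ T(t^T) \circ s^T \circ (\alpha \otimes \alpha) = \nabla^T \circ (\alpha \otimes \alpha)$.

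The step I expect to be the main obstacle is an auxiliary lemma needed to slide $\alpha$ past $S(t^S)$: that preservation of the strength forces preservation of the induced costrength, $\alpha \circ t^S = t^T \circ (\alpha \otimes \id)$. Since the hypothesis only mentions $s$, this must be extracted by writing $t$ in terms of $s$ and the braiding $\gamma$, say $t_{X,Y} = T(\gamma) \circ s_{Y,X} \circ \gamma$, and then chasing with the naturality of $\alpha$ and of $\gamma$. No individual rewrite is deep, but the whole argument hinges on tracking precisely which variable of the strength each naturality step acts on and which object indices appear, so the difficulty is almost entirely one of careful bookkeeping rather than of any conceptual subtlety.
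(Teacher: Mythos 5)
Your proposal is correct and follows essentially the same route as the paper's proof: both directions use the standard translations $s = \nabla \circ (\eta \otimes \id)$ and $\nabla = \mu \circ T t \circ s$, assembled from the monad-morphism axioms, naturality of $\alpha$ and of the strengths, and the hypothesis, exactly as in the paper's two pasting diagrams. The auxiliary lemma you single out---that preservation of the strength forces preservation of the induced costrength, proved via $t = T(\gamma) \circ s \circ \gamma$ and naturality of $\alpha$ and $\gamma$---is precisely the paper's ``lower central square,'' which the paper justifies only loosely as ``the image under $T$ of a naturality square for $t$,'' so on this one point your write-up is actually the more careful of the two.
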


The proof proceeds by construction of the lax monoidal structure from the strength and vice versa. Both directions use the assumption that $\alpha$ preserves the monad structure. The more tedious direction from \ref{strong_trafo} to \ref{monoidal_trafo} also uses the naturality of $s$ and $\alpha$. Since the proof is hard to locate in the literature, we give it here.

\begin{proof}
 We start by \ref{monoidal_trafo}$\Rightarrow$\ref{strong_trafo}. We can decompose the diagram \Cref{strongmorph} as follows.
 $$
 \begin{tikzcd}
  X\otimes SY \ar[swap]{dr}{\eta\otimes \id} \ar{rr}{s} \ar{ddd}[swap]{\id\otimes\alpha} && S(X\otimes Y) \ar{ddd}{\alpha} \\
  & SX\otimes SY \ar{ur}[swap]{\nabla} \ar{d}{\alpha\otimes\alpha} \\
  & TX\otimes TX \ar{dr}{\nabla} \\
  X\otimes TY \ar{ur}{\eta\otimes\id} \ar{rr}{s} && T(X\otimes Y)
 \end{tikzcd}
 $$
 The triangles at the top and at the bottom commute; in fact they are the standard way of obtaining a strength from a monoidal structure.  The trapezium on the left commutes because $\alpha$ is a morphism of monads and therefore preserves the units. The trapezium on the right is the second diagram of \Cref{monoidalmorph}. 
 
We turn to \ref{strong_trafo}$\Rightarrow$\ref{monoidal_trafo}. The first diagram of \Cref{monoidalmorph} commutes since $\alpha$ is a morphism of monads and therefore preserves the units. The second diagram of \Cref{monoidalmorph} can be decomposed as follows. 
 $$
 \begin{tikzcd}
  SX\otimes SY \ar{dr}[swap]{s} \ar{dd}[swap]{\id\otimes\alpha} \ar{rrr}{\nabla} &&& S(X\otimes Y) \ar{dddd}{\alpha} \\
  & S(SX\otimes Y) \ar{r}{St} \ar{d}{\alpha} & SS(X\otimes Y) \ar{ur}[swap]{\mu} \ar{d}{\alpha} \\
  SX\otimes TY \ar{r}{s} \ar{dd}[swap]{\alpha\otimes\id} & T(SX\otimes Y) \ar{r}{Tt} \ar{d}{T(\alpha\otimes\id)} & TS(X\otimes Y) \ar{d}{T\alpha} \\
  & T(TX\otimes Y) \ar{r}{Tt} & TT(X\otimes Y) \ar{dr}{\mu} \\
  TX\otimes TY \ar{ur}{s} \ar{rrr}{\nabla} &&& T(X\otimes Y)
 \end{tikzcd}
 $$
 The upper and lower trapezia commute; in fact they are the standard way of obtaining a monoidal structure from a strength. The upper trapezium on the left is the diagram \Cref{strongmorph}, which commutes by hypothesis. The lower trapezium on the left commutes by naturality of $s$. The upper central square commutes by naturality of $\alpha$. The lower central square is the image under $T$ of a naturality square for $t$. The trapezium on the right commutes since $\alpha$ is a morphism of monads and therefore preserves the multiplication.
\end{proof}

In probability theory, one is interested in the formation of \emph{marginals}, which is implemented by an \emph{opmonoidal} structure $T(X \otimes Y) \to TX \otimes TY$~\cite{ours_bimonoidal}. In the setting of this paper, where $\cat{C} = \cat{Top}$, the monoidal structure of our category is the Cartesian product structure. Hence the projections $X \times Y \to X$ and $X \times Y \to Y$ induce maps $T(X \times Y) \to TX$ and $T(X \times Y) \to TY$ by functoriality. These induce the opmonoidal structure describing the formation of marginal distributions from joint distributions. The well-behaved interaction of this opmonoidal structure with the monoidal structure is immediately implied \cite[Proposition~3.4]{ours_bimonoidal}.

\printbibliography

@article{catgromovhausdorff,
	author = "Akhvlediani, A. and Clementino, M.M. and Tholen, W.",
	journal = "Topology and its Applications",
	number = "8",
	owner = "perrone",
	pages = "1275–1295",
	timestamp = "2018.04.20",
	title = "{On the categorical meaning of Hausdorff and Gromov distances 1}",
	volume = "157",
	year = "2010",
	%note = {\href{https://arxiv.org/abs/0901.0618}{arXiv:0901.0618}},
	doi="10.1016/j.topol.2009.06.018",
}

@book{bogachev,
	author = "Bogachev, V.I.",
	owner = "perrone",
	publisher = "Springer",
	timestamp = "2017.09.19",
	title = "{Measure Theory}",
	year = "2000",
	doi="10.1007/978-3-540-34514-5",
}

@unpublished{breugel,
	author = "van Breugel, Franck",
	note = "\href{http://www.cse.yorku.ca/~franck/research/drafts/monad.pdf}{Available at http://www.cse.yorku.ca}",
	owner = "perrone",
	timestamp = "2017.07.06",
	title = "The Metric Monad for Probabilistic Nondeterminism",
	year = "2005"
}

@misc{fritz,
	author = "Fritz, Tobias",
	note = "\href{http://arxiv.org/abs/0903.5522}{arXiv:0903.5522}",
	title = "Convex Spaces 1: Definition and Examples",
	year = "2009"
}

@article{ours_bimonoidal,
	author = "Fritz, Tobias and Perrone, Paolo",
	journal = "Proceedings of MFPS 34, ENTCS",
	%note = "\href{https://arxiv.org/abs/1804.03527}{arXiv:1804.03527}",
	title = "Bimonoidal Structure of Probability Monads",
	doi="10.1016/j.entcs.2018.11.007",
	year = "2018"
}

@book{continuous,
	author = "Gierz, G. and Hofmann, K.H. and Keimel, K. and Lawson, J.D. and Mislove, M.W. and Scott, D.",
	publisher = "Cambridge University Press",
	title = "{Continuous Lattices and Domains}",
	doi="10.1017/CBO9780511542725",
	year = "2003"
}

@article{jones-plotkin,
	author = "Jones, C. and Plotkin, J.D.",
	journal = "Proceedings of the Fourth Annual Symposium of Logics in Computer Science",
	title = "A Probabilistic Powerdomain of Evaluations",
	year = "1989"
}

@article{keimel,
	author = "Keimel, Klaus",
	journal = "Topology and Applications",
	number = "2",
	pages = "227–239",
	title = "The monad of probability measures over compact ordered spaces and its {Eilenberg-Moore} algebras",
	volume = "156",
	doi="10.1016/j.topol.2008.07.002",
	year = "2008"
}

@article{ext-val,
	author = "Alvarez-Manilla, M. and Edalat, A. and Saheb-Djahromi, N.",
	journal = "Electronic Notes in Theoretical Computer Science",
	owner = "perrone",
	timestamp = "2018.01.15",
	title = "An extension result for continuous valuations",
	volume = "13",
	year = "1998",
	doi="10.1016/S1571-0661(05)80210-5",
}

@unpublished{monad-locales,
	author = "Vickers, S.",
	note = "Available at \href{https://www.cs.bham.ac.uk/~sjv/Riesz.pdf}{cs.bham.ac.uk/$\sim$sjv/Riesz.pdf}",
	title = "A monad of valuation locales",
	year ="2011"
}

@article{AJK,
	author = "Alvarez-Manilla, Mauricio and Jung, Achim and Keimel, Klaus",
	journal = "Theoretical Computer Science",
	number = "3",
	pages = "221–244",
	title = "The probabilistic powerdomain for stably compact spaces",
	volume = "328",
	year = "2004",
	doi="10.1016/j.tcs.2004.06.021",
}

@book{stochastic-orders,
	author = "Shaked, Moshe and Shanthikumar, George",
	publisher = "Springer",
	title = "Stochastic Orders",
	year = "2007"
}

@phdthesis{thesis,
	author = "Perrone, Paolo",
	note = "Available at \href{http://paoloperrone.org/phdthesis.pdf}{http://paoloperrone.org/phdthesis.pdf}",
	school = "University of Leipzig",
	title = "Categorical Probability and Stochastic Dominance in Metric Spaces",
	year = "2018"
}

@book{infdynana,
	author = "Aliprantis, Charalambos D. and Border, Kim C.",
	publisher = "Springer",
	title = "Infinite Dimensional Analysis",
	year = "2006",
	doi="10.1007/3-540-29587-9"
}

@book{fremlin,
	author = "Fremlin, D.H.",
	note = "Topological measure spaces. Part I, II, Corrected second printing of the 2003 original",
	pages = "Part I: 528 pp.; Part II: 439+19 pp. (errata)",
	publisher = "Torres Fremlin, Colchester",
	title = "Measure theory {Vol.} 4",
	year = "2006"
}

@book{topsoe,
	author = "Topsøe, Flemming",
	publisher = "Springer",
	title = "{Topology and Measure}",
	year = "1970",
	doi="10.1007/BFb0069481",
}

@book{stonespaces,
	author = "Johnstone, Peter",
	publisher = "Cambridge University Press",
	title = "{Stone Spaces}",
	year = "1982"
}

@phdthesis{schalk,
	author = "Schalk, Andrea",
	note = "Available at \href{www.cs.man.ac.uk/~schalk/publ/diss.ps.gz}{www.cs.man.ac.uk/~schalk/publ/diss.ps.gz}",
	school = "University of Darmstadt",
	title = "{Algebras for generalized power constructions}",
	year = "1993"
}

@article{banakh,
	author = "Banakh, Taras",
	journal = "Matematychni Studii",
	number = "1-2",
	owner = "",
	pages = "65-87",
	timestamp = "",
	title = "The topology of spaces of probability measures 1",
	note="Russian",
	volume = "5",
	year = "1995"
}

@unpublished{lawveremonad,
	author = "Lawvere, F.W.",
	note = "\href{https://ncatlab.org/nlab/files/lawvereprobability1962.pdf}{Available at nlab}",
	title = "The category of probabilistic mappings with applications to stochastic processes, statistics, and pattern recognition",
	url = "",
	year = "1962"
}

@incollection{girymonad,
	author = "Giry, {Michèle}",
	booktitle = "{Lecture Notes in Mathematics 915: Categorical Aspects of Topology and Analysis}",
	pages = "68-85",
	publisher = "Springer",
	title = "A categorical approach to probability theory",
	year = "1982",
	doi="10.1007/BFb0092872",
}

@article{swirszcz,
	author = "{Świrszcz}, T.",
	journal = "Bulletin de {l'Académie} Polonaise des Sciences: {Série} des sciences mathematiques, astronomique et physique",
	number = "1",
	owner = "",
	pages = "39-42",
	title = "{Monadic functors and convexity}",
	volume = "22",
	year = "1974"
}

@article{fedorchuk,
	author = "Fedorchuk, V.V.",
	journal = "Russian Mathematical Surveys",
	number = "1",
	owner = "",
	pages = "45-93",
	title = "{Probability measures in topology}",
	volume = "46",
	year = "1991",
	doi = {10.1070/rm1991v046n01abeh002722},
}

@Article{lucyshyn-wright,
	author    = "Lucyshyn-Wright, Rory B.B.",
	title     = "{Functional distribution monads in functional-analytic contexts}",
	journal   = "{Advances in Mathematics}",
	year      = "2017",
	volume    = "322",
	pages     = "806-860",
	doi="10.1016/j.aim.2017.09.027",
	%note = {\href{https://arxiv.org/abs/1701.08152}{arXiv:1701.08152}},
}

@article{michael,
	author = "Michael, Ernest",
	journal = "Transactions of the American Mathematical Society",
	number = "",
	owner = "",
	pages = "152-182",
	title = "{Topologies on spaces of subsets}",
	volume = "71",
	year = "1951"
}

@article{vietoris,
	author = "Vietoris, Leopold",
	journal = "{Monatshefte für Mathematik und Physik}",
	number = "",
	owner = "",
	pages = "258-280",
	timestamp = "",
	title = "{Bereiche zweiter Ordnung}",
	volume = "32",
	note= "German",
	doi="10.1007/BF01696886",
	year = "1922"
}

@book{hausdorff,
	author = "Hausdorff, Felix",
	publisher = "{Veit und Co.}",
	title = "{Grundzüge der Mengenlehre}",
	year = "1914",
note = "German",
}

@article{cletho,
	author = "Clementino, Maria Manuel and Tholen, Walter",
	journal = "{Topology Proceedings}",
	number = "",
	owner = "",
	pages = "71-95",
	timestamp = "",
	title = "{A characterization of the Vietoris topology}",
	volume = "22",
	year = "1997"
}

@Article{ext-val-sober,
  author  = {Alvarez-Manilla, M.},
  title   = {Extension of valuations on locally compact sober spaces},
  journal = {Topology and its Applications},
  year    = {2002},
  volume  = {124},
  number  = {2},
  pages   = {397--443},
doi="10.1016/S0166-8641(01)00249-8",
}

@Article{ours_kantorovich,
  author  = "Fritz, Tobias and Perrone, Paolo",
  title   = "A probability monad as the colimit of spaces of finite samples",
  journal = "Theory and Applications of Categories",
  year    = "2019",
  volume  = "34",
  number  = "7",
  pages   = "170-220",
 % note    = "\href{https://arxiv.org/abs/1712.05363}{arXiv:1712.05363}",
}

@article {isbell,
    author = "Isbell, John",
     title = "General function spaces, products and continuous lattices",
   journal = "Mathematical Proceedings of the Cambridge Philosophical Society",
    volume = "100",
      year = "1986",
    number = "2",
     pages = "193--205",
}

@article {HL,
    author = "Hofmann, Karl H. and Lawson, Jimmie D.",
     title = "The spectral theory of distributive continuous lattices",
   journal = "Transactions of the American Mathematical Society",
    volume = "246",
      year = "1978",
     pages = "285--310",
}

@inproceedings {EH,
    author = "Escard\'{o}, Mart\'{i}n and Heckmann, Reinhold",
     title = "Topologies on spaces of continuous functions",
 booktitle = "Proceedings of the 16th {S}ummer {C}onference on {G}eneral
              {T}opology and its {A}pplications ({N}ew {Y}ork)",
   journal = "Topology Proc.",
    volume = "26",
      year = "2001",
    number = "2",
     pages = "545--564",
}

@article{algebras,
	author = "Goubault-Larrecq, Jean and Jia, Xiaodong",
	%note = "\href{http://arxiv.org/abs/1903.07472}{Arxiv:1903.07472}",
	owner = "",
	journal = {Electronic Notes in Theoretical Computer Science},
	volume = {345},
	pages = {37--61},
	timestamp = "",
	title = "Algebras of the extended probabilistic powerdomain monad",
	doi="10.1016/j.entcs.2019.07.015",
	year = "2019"
}

@InProceedings{stablycompext,
author="Cohen, Ben
and Escardo, Martin
and Keimel, Klaus",
editor="Cai, Jin-Yi
and Cooper, S. Barry
and Li, Angsheng",
title="The Extended Probabilistic Powerdomain Monad over Stably Compact Spaces",
booktitle="Theory and Applications of Models of Computation",
year="2006",
publisher="Springer",
pages="566--575",
}

@InProceedings{heckmann,
author="Heckmann, R.",
editor="Andima, S. and Flagg, R.C. and Itzkowitz, G. and Misra, P. and Kong, Y. and Kopperman, R.",
title="Spaces of valuations",
booktitle="Papers on General Topology and Applications: Eleventh Summer Conference at the University of Southern Maine",
year="1996",
publisher="New York Academy of Sciences",
pages="174–200",
}

@unpublished{heckmann95,
author="Heckmann, R.",
title="Spaces of valuations",
year="1995",
note="Working paper version. Available at \href{http://citeseerx.ist.psu.edu/viewdoc/download?doi=10.1.1.45.5845&rep=rep1&type=pdf}{http://citeseerx.ist.psu.edu/viewdoc/download?doi=10.1.1.45.5845}",
comment="Available at \href{http://citeseerx.ist.psu.edu/viewdoc/download?doi=10.1.1.45.5845&rep=rep1&type=pdf}{http://citeseerx.ist.psu.edu/viewdoc/download?doi=10.1.1.45.5845}",
}

@incollection{jung,
	author = "Jung, A.",
	editor="Desharnais, J. and Panangaden, P.",
	booktitle = "{Electronic Notes in Theoretical Computer Science 87: Domain-theoretic Methods in Probabilistic Processes}",
	pages = "5-20",
	publisher = "",
	title = "Stably compact spaces and the probabilistic powerspace construction",
	doi="10.1016/j.entcs.2004.10.001",
	year = "2004"
}

@article {keimelcones,
    author = "Keimel, K.",
     title = "Topological cones: {F}unctional analysis in a {$T_0$}-setting",
   journal = "Semigroup Forum",
    volume = "77",
      year = "2008",
     pages = "109-142",
	doi="10.1007/s00233-008-9078-0",
}

@MastersThesis{kirch,
    author     =  "Kirch, O.",
    title     =     "Bereiche und Bewertungen",
    school     =     {Technische Hochschule Darmstadt},
    year     =     "1993",
note = "German",
    }

@article {niefield,
    author = "Niefield, S.B.",
     title = "Cartesianness: Topological spaces, uniform spaces, and affine schemes",
   journal = "Journal of Pure and Applied Algebra",
    volume = "23",
      year = "1982",
     pages = "146-167",
	doi="10.1016/0022-4049(82)90004-4",
}

@book{nonhausdorff,
	author = "Goubault-Larrecq, Jean",
	publisher = "Cambridge University Press",
	title = "Non-{H}ausdorff topology and domain theory",
	doi="10.1017/CBO9781139524438",
	year = "2013"
}

@article {hoffmann,
    AUTHOR = "Hoffmann, Rudolf-E.",
     TITLE = "Essentially complete {$T_{0}$}-spaces",
   JOURNAL = "Manuscripta Mathematica",
    VOLUME = "27",
      YEAR = "1979",
    NUMBER = "4",
     PAGES = "401--432",
	doi="10.1007/BF01507294",
}

@article {kzkock,
    AUTHOR = "Kock, Anders",
     TITLE = "Monads for which structures are adjoint to units",
   JOURNAL = "Journal of Pure and Applied Algebra",
    VOLUME = "104",
      YEAR = "1995",
     PAGES = "41--59",
	doi="10.1016/0022-4049(94)00111-U",
}

@article {kzzoberlein,
    AUTHOR = "Zöberlein, Volker",
     TITLE = "Doctrines on 2-categories",
   JOURNAL = "Mathematische Zeitschrift",
    VOLUME = "148",
      YEAR = "1976",
	doi="10.1007/BF01214522",
     PAGES = "267--279",
}

@article {kzkellylack,
    AUTHOR = "Kelly, Max and Lack, Steve",
     TITLE = "On property-like structures",
   JOURNAL = "Theory and Applications of Categories",
    VOLUME = "3",
      YEAR = "1997",
    NUMBER = "9",
}

@article {eilenberg65,
    AUTHOR = "Eilenberg, S. and Moore, J.",
     TITLE = "Adjoint functors and triples",
   JOURNAL = "Illinois Journal of Mathematics",
    VOLUME = "9",
      YEAR = "1965",
    NUMBER = "",
PAGES = "381–389",
}

@incollection{manes03,
	author = "Manes, Ernie",
	editor="Hazewinkel, M.",
	booktitle = "{Handbook of Algebra Vol. 3}",
	pages = "67-153",
	publisher = "Elsevier",
	title = "Monads of Sets",
	doi="10.1016/S1570-7954(03)80059-1",
	year = "2003"
}

@article{jacobs11,
  author    = {Bart Jacobs},
  title     = {Probabilities, distribution monads, and convex categories},
  journal   = {Theoretical Computer Science},
  volume    = {412},
  number    = {28},
  pages     = {3323--3336},
  year      = {2011},
	doi="10.1016/j.tcs.2011.04.005",
}

@article{rothschild70,
  author    = {Rothschild, M. and Stiglitz, J.E.},
  title     = {Increasing risk 1: A definition},
  journal   = {Journal of Economic Theory},
  volume    = {2},
  number    = {},
  pages     = {225-243},
  year      = {1970},
doi="10.1016/0022-0531(70)90038-4",
}

@incollection{smyth83,
	author = "Smyth, M.B.",
	editor="Diaz, J.",
	booktitle = "{Automata, Languages and Programming. ICALP 1983. Lecture Notes in Computer Science Vol. 154}",
	pages = "662-675",
	publisher = "Springer",
	title = "Power domains and predicate transformers: A topological view",
	year = "1983",
	doi="10.1007/BFb0036946",
}

@article {kock,
    AUTHOR = {Kock, Anders},
     TITLE = {Strong functors and monoidal monads},
   JOURNAL = {Archiv der Mathematik},
    VOLUME = {23},
      YEAR = {1972},
     PAGES = {113--120},
	doi="10.1007/BF01304852",
}

@article{ressel77,
	author = "Ressel, Paul",
	journal = "Mathematica Scandinavica",
	title = "Some continuity and measurability results on spaces of measures",
	volume = "40",
	number="1",
	year = "1977"
}

@book{halmos50,
	author = "Halmos, Paul R.",
	publisher = "Van Nostrand",
	title = "Measure Theory",
	year = "1950"
}

@article{kock_distributions,
    AUTHOR = {Kock, Anders},
     TITLE = {Commutative monads as a theory of distributions},
   JOURNAL = {Theory and Applications of Categories},
    VOLUME = {26},
      YEAR = {2012},
      number = {4},
     PAGES = {97--131},
    % note = {\href{https://arxiv.org/abs/1108.5952}{arXiv:1108.5952}},
}

@article {borelproduct,
	author = {Bledsoe, W. W. and Wilks, C. E.},
	title = {On {B}orel product measures},
	journal = {Pacific J. Math.},
	volume = {42},
	year = {1972},
	pages = {569--579},
}

@article {hitandmiss,
AUTHOR = {Beer, Gerald and Tamaki, Robert K.},
TITLE = {On hit-and-miss hyperspace topologies},
JOURNAL = {Comment. Math. Univ. Carolin.},
VOLUME = {34},
YEAR = {1993},
NUMBER = {4},
PAGES = {717--728},
}

@article{coquand-spitters,
author = {Coquand, Thierry and Spitters, Bas},
title = {Integrals and valuations},
journal = {Journal of Logic and Analysis},
year = {2009},
volume = {1},
number = {3},
pages = {1--22},
}

@incollection{escardo,
	author = {Escard\'o, Martin},
	title = {Synthetic topology of data types and classical spaces},
	year = {2004},
	series = {Electronic Notes in Theoretical Computer Science},
	volume = {87},
	pages = {21--156},
	note = {\href{http://www.cs.bham.ac.uk/~mhe/papers/syntop.pdf}{http://www.cs.bham.ac.uk/$\sim$mhe/papers/syntop.pdf}},
}
\addcontentsline{toc}{section}{\bibname}

\end{document}